\documentclass[12pt]{article}

\usepackage{geometry,amsmath,mathtools,blkarray,amssymb,amsthm,setspace,xcolor,bbm,tikz-cd,xparse,amsfonts,authblk,graphics,datetime,thmtools,comment,bm}
\usepackage[en-US]{datetime2}
\usepackage[normalem]{ulem}
\usepackage{caption,subcaption}
\usepackage{multirow}
\usepackage{booktabs}
\usepackage[mathlines]{lineno}
\usepackage{graphicx}

\usepackage{tikz,tikz-3dplot}
\usetikzlibrary{decorations.pathreplacing}
\usetikzlibrary{positioning}

\usepackage[shortlabels]{enumitem}
\usepackage[colorlinks,linkcolor=blue,citecolor=blue,urlcolor=blue,bookmarks=false,hypertexnames=true]{hyperref}
\usepackage[backend=biber, style=bwl-FU, sortcites=false, maxcitenames=2, mincitenames=1, maxbibnames=8, uniquelist=false]{biblatex}

\DeclareDelimFormat{postnotedelim}{\addcomma\space}
\let\cite\textcite

\allowdisplaybreaks
\bibliography{ref.bib}

\makeatletter
\@ifundefined{newcounteralias}{}{%
  \renewcommand\thmt@autorefsetup{%
    \@xa\def\csname\thmt@envname autorefname\@xa\endcsname
      \@xa{\thmt@thmname}%
  }%
}
\makeatother

\theoremstyle{plain}
\newtheorem{theorem}{Theorem}[section]
\newtheorem{proposition}[theorem]{Proposition}
\newtheorem{lemma}[theorem]{Lemma}
\newtheorem{corollary}[theorem]{Corollary}

\newtheorem{claim}[theorem]{Claim}

\theoremstyle{definition}

\newtheorem{definition}{Definition}[section]
\newtheorem{example}{Example}
\newtheorem{assumption}{Assumption}[section]

\newtheorem{remark}{Remark}[section]

\renewcommand\thmcontinues[1]{Continued}

\newcommand{\norm}[1]{\left\|#1\right\|}
\newcommand{\ip}[2]{\left<#1, #2\right>}
\newcommand{\argmin}{\mathop{\mathrm{argmin}}}

\renewcommand{\div}{\mathrm{div}}
\newcommand{\id}{\mathrm{id}}

\newcommand{\indic}{\bm{1}}

\NewDocumentCommand{\defmathletter}{m}{%
    \expandafter\newcommand\csname b#1\endcsname{\mathbb{#1}}%
    \expandafter\newcommand\csname c#1\endcsname{\mathcal{#1}}%
}
\NewDocumentCommand{\defmathletters}{>{\SplitList{,}}m}{\ProcessList{#1}{\defmathletter}}
\defmathletters{A,B,C,D,E,F,G,H,I,J,K,L,M,N,O,P,Q,R,S,T,U,V,X,Y,Z}

\NewDocumentCommand{\defvector}{m}{%
    \expandafter\newcommand\csname v#1\endcsname{\mathbf{#1}}%
}
\NewDocumentCommand{\defvectors}{>{\SplitList{,}}m}{\ProcessList{#1}{\defvector}}
\defvectors{A,B,C,D,E,F,G,H,I,J,K,L,M,N,O,P,Q,R,S,T,U,V,X,Y,Z}
\defvectors{a,b,c,d,e,f,g,h,i,j,k,l,m,n,o,p,q,r,s,t,u,v,x,y,z}

\title{Central Limit Theorems for Optimal Transport Functionals}
\author{Haruki Kono\thanks{Email: h.kono@r.hit-u.ac.jp}}
\affil{Hitotsubashi University}
\date{October 12, 2026}
\begin{document}
\maketitle

\begin{abstract}
We establish central limit theorems for empirical plug-in estimators of scalar functionals of quadratic optimal transport maps and potentials, including Wasserstein costs and several risk measures constructed from transport-based multivariate quantiles. 
We study the one-sample problem with a known reference distribution and i.i.d. observations from the target distribution. 
A natural approach is to combine a limit theorem for the empirical transport map with the functional delta method. We show, however, that in dimensions at least two, empirical transport-map errors admit no nonzero weak limit in $L^2$ under any normalizing scale.
Instead, we linearize the functional directly in the target distribution, obtaining an influence-function expansion with a remainder controlled by the transport-map error.
The influence function is characterized by a linear elliptic partial differential equation. 
Under regularity conditions, we use this expansion to prove central limit theorems in low dimensions and establish validity of the empirical bootstrap. 
We also derive semiparametric efficiency bounds and show that the plug-in estimators attain them under the same conditions.
\end{abstract}
\noindent\textit{Keywords:} optimal transport, empirical plug-in estimation, bootstrap, multivariate quantiles, semiparametric efficiency.

\section{Introduction} \label{sec:introduction}

\subsection{Motivation and basic setup}
Optimal transport studies how to move probability mass from a source distribution to a target distribution at minimal cost.
For quadratic cost, one minimizes the average squared Euclidean distance between source points and their matched destinations, and the minimum is the squared Wasserstein distance.
If the source is absolutely continuous and both distributions have finite second moments, the theorem of \cite{brenier1991polar} gives a unique optimal map, up to source-null sets, that is the gradient of a convex function; see also \cite{villani2003topics, villani2009optimal}.

Optimal transport maps also provide a multivariate analogue of quantile functions.
In one dimension, the quadratic transport from the uniform distribution on $(0, 1)$ to a distribution with distribution function $F$ is its quantile function $F^{- 1}.$
In higher dimensions, a transport map from a chosen reference distribution assigns an outcome vector to each reference point, so that reference points play the role of multivariate ranks.
This construction underlies the transport-based quantiles of \cite{chernozhukov2017monge} and the center-outward quantiles of \cite{hallin2021distribution}.
These maps define scalar risk measures, including maximum correlation and its tail-restricted versions; see \cite{ekeland2012comonotonic, beirlant2020center}.
Statistical inference for such functionals must account for uncertainty in the estimated transport map.

We study this inferential problem when the reference distribution is known and the outcome distribution is estimated from independent observations.
To formulate the parameters, let $\mu$ denote the known absolutely continuous reference law and $P$ the unknown outcome law, with supports $\overline \Omega$ and $\overline \Lambda$ for bounded convex domains $\Omega, \Lambda \subset \bR^d.$
Write $T_P \coloneqq \nabla \varphi_P$ for the quadratic transport map and its convex potential, so that $T_P (Z)$ has law $P$ when $Z$ has law $\mu.$
We remove the additive ambiguity in the potential by imposing $\mu \varphi_P = 0,$ where $\mu f \coloneqq \int f  d \mu.$
For a real-valued functional $\cJ$ of the normalized potential, the parameter of interest is
\begin{equation} \label{eq:introtarget}
    \theta (P)
    \coloneqq
    \cJ (\varphi_P)
    .
\end{equation}
As we see below, we allow for functionals that depend not only on the value of $\varphi_P$ but also on its gradients.

A basic example is a linear integral of the transport map against a given bounded vector weight $w : \Omega \to \bR^d$:
$$
    \cJ_w (\varphi_P)
    \coloneqq
    \int_\Omega w (z)^\top \nabla \varphi_P (z)  d \mu (z).
$$
Squared Wasserstein distance is a nonlinear evaluation of the map:
$$
    \cJ_{\mathrm W} (\varphi_P)
    \coloneqq
    \int
        \norm{z - \nabla \varphi_P (z)}^2
    d \mu (z)
    =
    W_2^2 (\mu, P)
    .
$$
Maximum tail correlation is a tail-weighted linear evaluation introduced by \cite{beirlant2020center} using the spherical-uniform reference law.
We consider the uniform-ball version of \cite[Section 4.1.2]{sadhu2024stability}.
Let $\mu$ be uniform on the unit ball $\Omega$ and, for $\tau \in (0, 1),$ set $E_\tau \coloneqq \{z \in \Omega {\mid} \norm{z} > (1 - \tau)^{1/d}\},$ so that $\mu (E_\tau) = \tau.$
Then
$$
    \kappa_\tau (P)
    \coloneqq
    \frac{1}{\tau}
    \int_{E_\tau} z^\top \nabla \varphi_P (z)  d \mu (z).
$$
This measure averages the inner product of reference ranks and their assigned outcomes over the outer $\tau$ fraction of ranks.

Given independent observations $Y_1, \ldots, Y_n$ with law $P,$ let $P_n \coloneqq n^{- 1} \sum_{i = 1}^n \delta_{Y_i}$ be the empirical target law.
Write $T_{P_n} = \nabla \varphi_{P_n}$ for the transport from $\mu$ to $P_n,$ with $\mu \varphi_{P_n} = 0.$
The empirical plug-in estimator is $\widehat \theta_n \coloneqq \cJ (\varphi_{P_n}).$
A natural approach to its limiting distribution would first establish a function-space limit for $\varphi_{P_n} - \varphi_P$ and then apply the functional delta method to $\cJ.$
Indeed, \cite[Theorem 2.4 and Corollary 2.11]{gonzalez2026empirical} establish a central limit theorem for empirical potentials in an $L^q$ topology in dimensions two and three, and obtain Gaussian limits for weighted averages of the potential.
The functional delta method likewise applies to nonlinear summaries that are differentiable in the relevant $L^q$ norm.

This argument does not automatically extend to functionals involving gradients. 
Applying the functional delta method requires a derivative that is continuous in the topology of the potential limit. 
For gradient-dependent functionals, the first variation can contain boundary or interface terms that are not controlled by the $L^q$ norm.
The potential central limit theorem in $L^q$ therefore does not by itself justify the delta method for such a functional.

A natural stronger space is $H^1 (\Omega),$ whose norm controls both potential values and square-integrable weak gradients on $\Omega.$
However, even this route is unavailable for the exact empirical transport.
\begin{theorem}[Failure of process convergence]\footnote{Related failures of function-space convergence have been established in other transport models.
\cite[Proposition 4]{sadhu2024stability} rule out nonzero weak limits in $L^2$ for empirical transport-map errors when the source is continuous and the population target is supported on a prescribed finite set, under their regularity assumptions.
\cite[Theorem 7]{manole2024central} obtain a corresponding obstruction for maps estimated from kernel-smoothed densities on the flat torus in dimensions $d \geq 3,$ under their bandwidth conditions.
Theorem \ref{thm:processobstruction} concerns the unsmoothed empirical target of a smooth population law and also includes dimension two.
}
\label{thm:processobstruction}
Suppose Assumption \ref{ass:base} holds and $P_n \coloneqq n^{- 1} \sum_{i = 1}^n \delta_{Y_i}$ for independent observations with law $P.$
Normalize the potentials by $\mu \varphi_{P_n} = \mu \varphi_P = 0.$
In every dimension $d \geq 2,$ if $a_n \to \infty$ is deterministic and
$a_n (\varphi_{P_n} - \varphi_P)$ is tight in $H^1 (\Omega),$ then
$$
    a_n = o \left(\sqrt n\right)
    \qquad \text{and} \qquad
    a_n \norm{\varphi_{P_n} - \varphi_P}_{H^1 (\Omega)}
    \longrightarrow 0
    \quad \text{in probability}.
$$
The same statement holds for $a_n (T_{P_n} - T_P)$ in $L^2 (\mu; \bR^d).$
In particular, the root-$n$ errors are not tight, and no diverging deterministic normalization has a nonzero weak limit in these spaces.
\end{theorem}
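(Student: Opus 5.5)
The plan is to show that any subsequential weak limit $G$ of $X_n \coloneqq a_n (T_{P_n} - T_P)$ in $L^2 (\mu; \bR^d)$ must vanish. Then I upgrade this to convergence of the norm, and finally extract the rate from the piecewise-constant structure of $T_{P_n}$.

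Under the normalization $\mu \varphi = 0$, the Poincaré inequality on the convex domain $\Omega$ makes $\norm{\varphi_{P_n} - \varphi_P}_{H^1 (\Omega)}$ equivalent to $\norm{T_{P_n} - T_P}_{L^2 (\mu)}$. So the potential and map statements are equivalent, and I work with maps. Tightness in the norm topology gives, along a subsequence, convergence in law to some $G$. By Skorokhod I may assume $X_n \to G$ strongly almost surely. Since each $X_n$ is a gradient and gradients form a closed subspace, $G = \nabla g$ for some $g \in H^1 (\Omega)$.

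\emph{Step 1: linearized mass constraint.} Take smooth $f$ on a neighbourhood of $\overline \Lambda$. Since $T_{P_n \#} \mu = P_n$ and $T_{P \#} \mu = P$,
\begin{equation*}
    (P_n - P) f = \int \left( f (T_{P_n}) - f (T_P) \right) d \mu = \ip{\nabla f \circ T_P}{T_{P_n} - T_P}_{L^2 (\mu)} + O \left( \norm{T_{P_n} - T_P}^2_{L^2 (\mu)} \right).
\end{equation*}
Multiply by $a_n$. The left side becomes $O_p (a_n n^{-1/2})$. The remainder is $O_p (a_n^{-1}) \to 0$ by tightness.

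The difficulty is that we do not yet know $a_n = o (\sqrt n)$. I handle this by first proving Step 3 below, whose rate bound uses only tightness. Granting $a_n n^{-1/2} \to 0$, we get $\ip{\nabla f \circ T_P}{G} = 0$ for every smooth $f$.

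Now write $k = f \circ T_P$, so that $\nabla f \circ T_P = (D^2 \varphi_P)^{-1} \nabla k$. Under Assumption \ref{ass:base}, $T_P$ is a smooth diffeomorphism, so such $k$ exhaust a dense class in $H^1 (\Omega)$. We obtain
\begin{equation*}
    \int \nabla k^\top (D^2 \varphi_P)^{-1} \nabla g \, d \mu = 0 \quad \text{for all } k.
\end{equation*}
This is the weak form of a uniformly elliptic Neumann problem, since $\mu$ has density bounded above and below and $D^2 \varphi_P$ has bounded eigenvalues. Hence $g$ is constant and $G = 0$. Strong almost-sure convergence then gives $\norm{X_n} \to 0$ along the subsequence. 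Since the subsequence was arbitrary, $a_n \norm{T_{P_n} - T_P} \to 0$ in probability.

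\emph{Step 3: rate (the main obstacle).} $T_{P_n}$ is constant on the Laguerre cells $C_1, \dots, C_n$, each of $\mu$-mass $1/n$. Let $\Pi_n$ denote conditional expectation onto this partition, which is an orthogonal projection in $L^2 (\mu)$. Since $(I - \Pi_n) T_{P_n} = 0$, we have $(I - \Pi_n) X_n = - a_n (T_P - \Pi_n T_P)$ and the Pythagorean identity
\begin{equation*}
    \norm{T_{P_n} - T_P}^2 = \norm{T_{P_n} - \Pi_n T_P}^2 + \norm{T_P - \Pi_n T_P}^2 .
\end{equation*}

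I need two ingredients.
\begin{enumerate}[(i)]
    \item Cell diameters tend to zero in probability uniformly. This follows from uniform stability of $\nabla \varphi_{P_n}$ and the strict convexity of $\varphi_P$ from Assumption \ref{ass:base}. It gives $\Pi_n \to I$ strongly.
    \item A lower bound $\norm{T_P - \Pi_n T_P}^2 \geq c \, n^{-2/d}$. On a set $C$ of Lebesgue measure $v$, the variance of a bi-Lipschitz image of the uniform law is at least $c v^{2/d}$, by comparison with the ball, which minimizes the second moment. Summing over cells of mass $1/n$ gives $c \, n \cdot n^{-1} \cdot n^{-2/d}$.
\end{enumerate}
From tightness alone, $a_n \norm{T_P - \Pi_n T_P} \leq \norm{X_n} = O_p (1)$. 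Then (ii) gives $a_n = O (n^{1/d})$.

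To get the strict rate, use $\norm{(I - \Pi_n) X_n} \leq \norm{(I - \Pi_n)(X_n - G)} + \norm{(I - \Pi_n) G} \to 0$, valid along Skorokhod subsequences. Hence $a_n \norm{T_P - \Pi_n T_P} \to 0$, and (ii) yields $a_n = o (n^{1/d}) \subseteq o (\sqrt n)$ for $d \geq 2$. This argument does not use Step 1, so it legitimately feeds into Step 1.

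Finally, the root-$n$ errors cannot be tight: tightness at $a_n = \sqrt n$ would force $\sqrt n = o (\sqrt n)$, a contradiction. The claim about nonzero weak limits follows directly from the convergence of the norm to zero.
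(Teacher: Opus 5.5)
Your proposal is correct. Your Step 1 is essentially the paper's final step: once $a_n = o(\sqrt n)$, any subsequential limit is $B_P$-orthogonal to a dense class and hence zero. The genuine difference is how you obtain the rate.

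The paper first excludes every weak limit of $\sqrt n(\varphi_{P_n}-\varphi_P)$. The Taylor linearization and the CLT force $B_P(U,\cdot)$ to be an isonormal Gaussian process on $L_0^2(\mu)$. In the eigenbasis of $B_P$ relative to $L^2(\mu)$, the bound $\lambda_j \le C j^{2/d}$ obtained from Weyl's law gives $\sum_j \lambda_j^{-1} = \infty$. Hence $B_P(U,U) \ge \sum_j Z_j^2/\lambda_j$ cannot be finite. The rate $a_n = o(\sqrt n)$ then follows by rescaling a tight sequence.

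You instead exploit the atomic structure of $P_n$. The fine-scale component $(I-\Pi_n)X_n = -a_n(I-\Pi_n)T_P$ has norm at least $c\,a_n n^{-1/d}$ almost surely, by the quantization bound. Tightness together with $\Pi_n \to I$ forces this component to vanish.

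What each route buys:
\begin{enumerate}[(a)]
\item Your route avoids the Gaussian identification, the spectral theorem, and eigenvalue asymptotics. It also yields the sharper conclusion $a_n = o(n^{1/d})$, which for $d \ge 3$ rules out even the natural normalization $n^{1/d}$.
\item The paper's route never uses that $P_n$ is discrete, only the linearization and a CLT for $\sqrt n (P_n - P) f$. It therefore transfers to other target estimates, such as smoothed ones. It explains the obstruction as white noise failing to lie in the dual of $\cV$ when $d \ge 2$; in your argument $d \ge 2$ enters only through $n^{1/d} \le \sqrt n$.
\end{enumerate}

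Two steps need a cleaner justification.
\begin{enumerate}[(a)]
\item For (i), the relevant input is uniform convergence of potentials, not of gradients. By Lemma \ref{lem:potentials} and the uniform Lipschitz bound, $\varphi_{P_n} \to \varphi_P$ uniformly almost surely. Each $\varphi_{P_n}$ is affine on each convex Laguerre cell, and $D^2\varphi_P \succeq m_P I_d$ on the convex set $\overline\Omega$. The midpoint inequality then gives $\operatorname{diam}(C_i)^2 \le 16\norm{\varphi_{P_n}-\varphi_P}_\infty/m_P$ for every $i$.
\item Avoid Skorokhod in Step 3. The projection $\Pi_n$ is built from the original sample, while Skorokhod reproduces only the law of $X_n$, so argue on the original space. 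Take a compact $K$ carrying $X_n$ with high probability. The contractions $\Pi_n$ converge strongly in probability, so a finite-net argument gives $\sup_{x \in K}\norm{(I-\Pi_n)x} \to 0$ in probability. Hence $\norm{(I-\Pi_n)X_n} \to 0$ in probability.
\end{enumerate}
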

The proof is given in Appendix \ref{app:processproofs}.

Fortunately, the absence of a function-space limit does not preclude Gaussian limits for selected scalar evaluations.
We therefore study the composite parameter $P \mapsto \cJ (\varphi_P)$ directly and bound the approximation error for each functional, without requiring convergence of the full potential in $H^1 (\Omega).$

\subsection{Overview of the results}
Our main result approximates the change in $\theta$ by the change in the expectation of a single function of the target outcome.
For another law $R$ on $\overline \Lambda,$ write $T_R = \nabla \varphi_R$ for its transport map, normalize $\mu \varphi_R = 0,$ and set $\theta (R) = \cJ (\varphi_R).$
Under the functional conditions below, we construct $\psi_P$ with $P \psi_P \coloneqq \int \psi_P  d P = 0$ and obtain the following expansion, with remainder $\cR (R, P),$ exponent $r \in (1, 2],$ and a constant $C_P$ independent of $R$:
\begin{equation} \label{eq:introexp}
    \begin{aligned}
        \theta (R)
        -
        \theta (P)
        & =
        (R - P) \psi_P
        +
        \cR (R, P)
        ,
        \\
        | \cR (R, P) |
        & \leq
        C_P \norm{T_R - T_P}_{L^2 (\mu)}^r
        .
    \end{aligned}
\end{equation}
Here $(R - P) \psi_P \coloneqq \int \psi_P d R - \int \psi_P d P.$
At $R = P_n,$ the leading term is the centered sample average $n^{- 1} \sum_{i = 1}^n \psi_P (Y_i).$
The function $\psi_P$ thus determines its limiting variance once the remainder is negligible at the root-$n$ scale.

The construction of $\psi_P$ starts by linearly approximating the change in $\cJ$ caused by a change in the potential.
We then represent this linear approximation, up to a controlled error, by $(R - P) \psi_P.$
Finding the representing function amounts to solving a linear elliptic partial differential equation determined by the population transport and the derivative of $\cJ.$
The proof verifies the regularity of this solution and bounds both the error in the initial functional approximation and the error in its representation as a change in expectation.

The remainder exponent $r$ depends on the functional.
Under the stated smoothness conditions, integrals of smooth functions of potential values and gradients, including squared Wasserstein distance, have a quadratic remainder ($r = 2$).
For maximum tail correlation, the weight jumps across the inner boundary of $E_\tau.$
We decompose the tail integral into the integral over the full reference ball minus that over the inner ball.
Controlling the contribution near this inner boundary gives $r = 4 / 3.$
The same bound applies to subgroup evaluations over regions with smooth boundaries whose closures lie in $\Omega.$

Dimension enters through the rate at which the empirical map approaches $T_P.$
The expansion yields root-$n$ Gaussian limits when $\sqrt n \norm{T_{P_n} - T_P}_{L^2 (\mu)}^r$ tends to zero in probability.
The available transport rates verify this condition in dimensions two and three for $r = 2,$ and in dimension two for $r = 4 / 3.$
The expansion itself holds in every dimension $d \geq 2;$ in higher dimensions its bound need not make the remainder negligible at the root-$n$ scale.

The expansion (\ref{eq:introexp}) also applies after resampling the observations with replacement and recomputing the transport statistic.
This gives empirical bootstrap confidence intervals in the same dimensions.
We also establish the information bound for the parameter: no estimator regular under local perturbations of the target law can have asymptotic variance below $P \psi_P^2.$
The empirical estimator attains this bound whenever its remainder is negligible at the root-$n$ scale.

\subsection{Related literature}
For general treatments of optimal transport, Wasserstein geometry, and Brenier maps, see \cite{villani2003topics, villani2009optimal}.
In statistics, an optimal transport map from a reference law to $P$ can be interpreted as a multivariate quantile function.
\cite{chernozhukov2017monge} develop Monge--Kantorovich quantiles, ranks, signs, and depth from this perspective, together with empirical counterparts and consistency results.
Thus the map $T_P$ studied here also has an established interpretation as a transport-based quantile map.
\cite{hallin2021distribution} develop the closely related center-outward distribution and quantile functions using a spherical-uniform reference distribution.
\cite{ghosal2022multivariate} establish consistency and rates for empirical transport ranks and quantiles, with applications to testing.
We study scalar evaluations of the quantile map and its potential under a smooth reference law.

Smooth perturbations of population distributions lead to differential equations describing changes in transport maps.
\cite{loeper2005regularity} studies such changes through the linearized Monge--Amp\`ere equation, and \cite{gigli2011holder} establishes quantitative continuity of optimal maps along curves of measures.
On smooth uniformly convex domains, \cite[Theorem 3.2]{gonzalez2025linearization} differentiate transport along smooth density paths and characterize the derivative by a linear elliptic boundary-value problem.
\cite[Theorems 1.1, 1.4, and 1.5]{caja2026stability} develop stability estimates, a divergence-form description of the linear response, and a second-variation formula for squared Wasserstein distance.
These results describe population perturbations; applying them to an empirical target requires control of discrete perturbations as well.

Quantitative linearization also provides approximations that can be applied to discrete targets.
\cite{goldman2021quantitative} develop harmonic approximations of optimal transport plans for rough measures.
\cite[Proposition 4.5]{gonzalez2026influence} establish asymptotic linearity for integrals of empirical map errors against a class of weights constructed from smooth functions, and obtain root-$n$ Gaussian limits in dimensions two and three.
Their equation (73) bounds the difference between such an integral and a centered empirical average by a power of the map error.
For a general functional, we identify when its derivative admits this type of weighted-map representation and control the additional nonlinear functional remainder.
Subgroup evaluations further require control near the subgroup boundary.
Section \ref{sec:testedmap} gives the precise correspondence with their result.

The minimax estimation of smooth optimal transport maps is studied by \cite{hutter2021minimax}, who construct estimators attaining near-minimax rates, and \cite{manole2024plugin}, who establish minimax rates for plug-in estimators.

\cite{manole2024central} develop limit distribution theory for smooth optimal transport maps on the flat torus.
For $d \geq 3,$ their Theorem 5 gives pointwise Gaussian limits for maps from a known source to a kernel density estimator of the target, with normalization $\sqrt{n h_n^{d - 2}},$ where $h_n \to 0$ is the bandwidth, under smoothness and bandwidth conditions that make the bias negligible.
Their quantitative linearization of the Monge--Amp\`ere equation (Theorem 2) reduces the analysis to a linear elliptic equation with random forcing and periodic boundary conditions.
These results concern pointwise estimation of a smooth map on the torus; our results concern root-$n$ inference for scalar functionals of empirical maps on bounded convex domains.

For transport with a fixed smoothing kernel, \cite[Section 4.2]{gonzalez2025linearization} establish a central limit theorem for maps to the smoothed target law, while \cite{goldfeld2024smooth,goldfeld2024inference} obtain limit distributions and bootstrap consistency for smoothed Wasserstein distances.
For entropic transport with a fixed positive regularization parameter, \cite{delbarrio2023entropic} prove a central limit theorem for transportation costs, \cite{gonzalez2024weak} derive limits for potentials and couplings, and \cite{goldfeld2024limit} establish limits for maps, potentials, and Sinkhorn divergences.

Function-space limits for potentials give another route to scalar inference, provided that the functional is continuous and differentiable in the topology of the limit.
\cite[Theorem 2.4 and Section 2.4]{gonzalez2026empirical} prove a low-dimensional $L^q$ central limit theorem and bootstrap consistency for empirical potentials, modulo additive constants.
These results apply to suitable summaries of potential values.
Functionals defined through gradients need not extend continuously to $L^q,$ because this norm does not control derivatives; their limits therefore require a separate argument.
Section \ref{sec:limitcomparison} compares the two approaches.

Semidiscrete optimal transport concerns a continuous source and a population target supported on a finite set of known locations, with unknown masses.
\cite{sadhu2024stability} establish stability of these maps, together with Gaussian limits and bootstrap consistency for scalar functionals.
Their Proposition 4, discussed above, shows that scalar Gaussian limits can coexist with failure of $L^2$ process convergence.
In our setting the population target instead has a smooth density, and the support of its empirical approximation consists of the observed sample points and grows with the sample size.
Thus their finite-support population theory does not directly cover the estimator studied here.

Inference for transportation costs is an established special case of scalar transport inference.
\cite{del2019central} prove expectation-centered central limit theorems for empirical quadratic transportation costs in general dimension; replacing the expectation of the estimator by the population cost requires additional bias control.
\cite{hundrieser2024unifying} obtain distributional limits through directional differentiability under empirical-process conditions.
\cite{manole2024plugin} use smooth-base stability to establish population-centered cost inference and efficiency, including empirical estimators in low dimensions.
Our functional class also includes summaries of potential values and subgroup evaluations of the transport map.
Section \ref{sec:limitcomparison} relates the Wasserstein specialization of our expansion to these cost results.
\subsection{Organization}
Section \ref{sec:setup} defines the model and functional assumptions.
Section \ref{sec:expansion} establishes the expansion and verifies the functional examples.
Section \ref{sec:statistics} derives empirical limits and bootstrap confidence intervals, and Section \ref{sec:efficiency} establishes the information bounds.
The appendix contains all omitted proofs.
\subsection{Notation}
For a probability law $Q,$ write $Q f \coloneqq \int f  d Q.$
The spaces $L^q (Q)$ and $L_0^q (Q)$ consist of functions with finite $q$th moment and their mean-zero subspace, respectively, with $\norm{f}_{L^q (Q)} \coloneqq (Q |f|^q)^{1 / q}$ for $1 \leq q < \infty.$
Vector norms are Euclidean, matrix norms are induced operator norms, $\top$ denotes transpose, and $I_d$ is the identity matrix.
For symmetric matrices, $A \preceq B$ means $x^\top A x \leq x^\top B x$ for every $x.$
The continuous dual of a normed space $E$ is $E^\ast,$ with $\norm{L}_{E^\ast} \coloneqq \sup_{\norm{v}_E \leq 1} |L (v)|.$

We write $\nabla,$ $D,$ and $D^2$ for gradients, Jacobians, and Hessians.
For a domain $U,$ $C^{k, \gamma} (\overline U)$ denotes functions with continuous derivatives through order $k$ on $\overline U$ and $\gamma$-H\"older continuous derivatives of order $k,$ where $0 < \gamma \leq 1.$
The case $\gamma = 1$ means Lipschitz continuity.
We write $C^\infty (\overline U)$ for restrictions of smooth functions on a neighborhood of $\overline U,$ and $C_c^\infty (U)$ for smooth functions compactly supported in $U.$
For locally integrable functions $v$ and $w,$ $w$ is the weak derivative $\partial_j v$ if $\int_U v  \partial_j \xi = -\int_U w \xi$ for every $\xi \in C_c^\infty (U).$
The Sobolev space $H^k (U)$ consists of functions with square-integrable weak derivatives through order $k,$ and $H_0^1 (U)$ is the closure of $C_c^\infty (U)$ in $H^1 (U).$
We use $\norm{\cdot}_\infty$ for the supremum or essential supremum norm, as appropriate, and $\operatorname{Lip} (f)$ for the Lipschitz constant.

To define boundary regularity, write $x = (x_{-d}, x_d)$ with $x_{-d} = (x_1, \ldots, x_{d - 1})^\top,$ and let $B_r$ be the open ball of radius $r$ centered at zero in the Euclidean space determined by context.
A domain $U$ has $C^{k, \gamma}$ boundary, for an integer $k \geq 1$ and $0 < \gamma \leq 1,$ if each $x_0 \in \partial U$ admits a rigid coordinate change $\Phi$ with $\Phi (x_0) = 0,$ a radius $r > 0,$ and $h \in C^{k, \gamma} (\overline{B_r})$ such that $h (0) = 0,$ $\norm{h}_\infty < r,$ and, with $C_r \coloneqq B_r \times (-r, r),$
\begin{align*}
    \Phi (U) \cap C_r 
    = 
    \{x \in C_r \mid x_d > h (x_{-d})\}
    , 
    \qquad
    \Phi (\partial U) \cap C_r 
    = 
    \{x \in C_r \mid x_d = h (x_{-d})\}
    .
\end{align*}
Replacing $C^{k, \gamma}$ by $C^k$ defines a $C^k$ boundary.
A function on such a boundary or hypersurface has class $C^{k, \gamma}$ if its expression in each graph chart has that regularity.

The symbols $\bP$ and $\bE$ denote probability and expectation under the sampling law.
We use $\rightsquigarrow$ for weak convergence and $\xrightarrow{\bP}$ for convergence in probability.
The notation $O_P (c_n)$ means boundedness in probability after division by $c_n,$ and $o_P (c_n)$ means convergence to zero in probability after that division.

\section{Setup, definitions, and assumptions} \label{sec:setup}

\subsection{Distributions and transport}

Let $d \geq 2$ throughout.
Let $\Omega, \Lambda \subset \bR^d$ be bounded convex domains.
The known source $\mu$ is absolutely continuous with support $\overline \Omega,$ and the unknown target $P$ has support $\overline \Lambda.$
For a measurable map $S,$ the push-forward $S_\# Q$ is the law of $S (X)$ when $X \sim Q.$
For every probability law $R$ on $\overline \Lambda,$ absolute continuity of $\mu$ ensures existence and $\mu$-almost-everywhere uniqueness of the quadratic optimal transport map by Brenier's theorem (\cite{brenier1991polar}; \cite[Theorem 2.12]{villani2003topics}).
We denote this map by
\begin{equation} \label{eq:transport}
    T_R
    \coloneqq
    \underset{S_\# \mu = R}{\argmin}
    \int_\Omega
        \norm{z - S (z)}^2
    d \mu (z)
    .
\end{equation}
For probability measures $Q, R$ with finite second moments, the squared Wasserstein distance is
$$
    W_2^2 (Q, R)
    \coloneqq
    \inf_{\pi \in \Pi (Q, R)}
    \int
        \norm{x - y}^2
    d \pi (x, y)
    ,
$$
where $\Pi (Q, R)$ denotes the probability measures with marginals $Q$ and $R.$

The following definition collects the support and density conditions used to obtain smooth population transport.
We use the class of regular distributions in \cite[Definition 2.2]{gonzalez2026influence}, stated here with open domains and closed supports.
\begin{definition}[Regular distribution] \label{def:regular}
For the exponent $\alpha \in (0, 1),$ a probability measure $Q$ on $\bR^d$ is regular if its support is $\overline U$ for a bounded convex domain $U$ with $C^{2, 1}$ boundary and it has a density $q \in C^{1, \alpha} (\overline U)$ satisfying
$$
    0
    <
    \inf_{\overline U} q
    \leq
    \sup_{\overline U} q
    <
    \infty
    .
$$
\end{definition}
We impose these conditions on the source and population target.
\begin{assumption}[Population distributions] \label{ass:base}
For $d \geq 2$ and exponent $0 < \alpha < 1,$ the measures $\mu$ and $P$ are regular in the sense of Definition \ref{def:regular}.
Their supports are $\overline \Omega$ and $\overline \Lambda,$ and their densities are denoted by $\rho$ and $p,$ respectively.
\end{assumption}

Assumption \ref{ass:base} includes the volume-uniform distribution on a smooth convex body, but excludes the spherical-uniform distribution used for center-outward quantiles (\cite{hallin2021distribution}).

Since $P$ is also absolutely continuous, its Brenier map to $\mu,$ denoted by $F_P,$ is the almost-everywhere inverse of $T_P.$
The following lemma records the regularity theorem stated in \cite[Theorem 2.3]{gonzalez2026influence}, based on \cite{caffarelli1992boundary, caffarelli1996boundary, chen2021global}, together with the inverse and change-of-variables identities for these regular maps.
\begin{lemma}[Population transport regularity] \label{lem:mapregularity}
Under Assumption \ref{ass:base}, the maps $T_P$ and $F_P$ have representatives in $C^{2, \alpha} (\overline \Omega; \bR^d)$ and $C^{2, \alpha} (\overline \Lambda; \bR^d),$ respectively.
They are mutually inverse on the closed domains and are diffeomorphisms between $\Omega$ and $\Lambda.$
Define
\begin{equation} \label{eq:H}
    H_P (z)
    \coloneqq
    DT_P (z)
    .
\end{equation}
The Jacobian $H_P$ is symmetric, and there are constants $C_P < \infty$ and $0 < m_P \leq M_P < \infty$ such that
\begin{gather*}
    \norm{T_P}_{C^{2, \alpha} (\overline \Omega)}
    +
    \norm{F_P}_{C^{2, \alpha} (\overline \Lambda)}
    \leq
    C_P
    \qquad
    \text{and}
    \qquad
    m_P I_d
    \preceq
    H_P (z)
    \preceq
    M_P I_d
    ,
    \qquad
    z \in \overline \Omega
    .
\end{gather*}
The matrix fields $H_P$ and $H_P^{- 1}$ belong to $C^{1, \alpha} (\overline \Omega),$ and
$$
    p (T_P (z)) \det H_P (z)
    =
    \rho (z)
    ,
    \qquad
    z \in \overline \Omega
    .
$$
\end{lemma}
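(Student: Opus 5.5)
The plan is to treat this as assembling known regularity theory plus elementary identities; no new estimate is needed. First, I invoke \cite[Theorem 2.3]{gonzalez2026influence}, itself built on the Caffarelli boundary regularity theory \cite{caffarelli1992boundary, caffarelli1996boundary} and the global estimates of \cite{chen2021global}. Under Assumption \ref{ass:base}, with both measures regular in the sense of Definition \ref{def:regular} ($C^{2,1}$ convex boundaries and $C^{1,\alpha}$ densities bounded above and below), it gives Brenier potentials $\varphi_P \in C^{3,\alpha}(\overline\Omega)$ and $\varphi_P^\ast \in C^{3,\alpha}(\overline\Lambda)$. Hence $T_P = \nabla\varphi_P \in C^{2,\alpha}(\overline\Omega;\bR^d)$ and $F_P = \nabla\varphi_P^\ast \in C^{2,\alpha}(\overline\Lambda;\bR^d)$, with finite norms; these finite norms define $C_P$. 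The same theorem, or Caffarelli's strict convexity argument, gives uniform convexity of $\varphi_P$ on $\overline\Omega$. Symmetry of $H_P = D^2\varphi_P$ is immediate because it is a Hessian.

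Second, I obtain the eigenvalue bounds and the inverse relation together. Since $\varphi_P$ and $\varphi_P^\ast$ are Legendre conjugates and both are $C^1$ and strictly convex, $\nabla\varphi_P^\ast(\nabla\varphi_P(z)) = z$ holds on $\Omega$. By continuity this extends to $\overline\Omega$, and the symmetric identity holds on $\overline\Lambda$. Thus $T_P$ and $F_P$ are mutually inverse $C^{2,\alpha}$ maps between the closed domains. Interior points map to interior points because $T_P$ is a homeomorphism of closures that carries the open support of $\mu$ onto the open support of $P$, so $T_P$ is a diffeomorphism $\Omega\to\Lambda$.

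Differentiating $F_P\circ T_P = \id$ gives $DF_P(T_P(z))\,H_P(z) = I_d$. Then $\norm{H_P}\le \norm{T_P}_{C^1}$ yields $M_P$, and $\norm{H_P^{-1}} \le \norm{F_P}_{C^1}$ yields $m_P = 1/\norm{F_P}_{C^1}$. The regularity of the matrix fields follows directly: $H_P\in C^{1,\alpha}(\overline\Omega)$ because $T_P\in C^{2,\alpha}$. Also $H_P^{-1} = DF_P\circ T_P$ is a composition of a $C^{1,\alpha}$ field with a $C^{2,\alpha}$ (in particular Lipschitz) map, hence $C^{1,\alpha}$. One can alternatively argue that matrix inversion is smooth on the uniformly positive definite cone.

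Third, the Monge--Amp\`ere identity comes from the change-of-variables formula. For any $g\in C_c(\Lambda)$, the relation $(T_P)_\#\mu = P$ gives $\int g(T_P(z))\rho(z)\,dz = \int g(y)p(y)\,dy$. Substituting $y = T_P(z)$, a diffeomorphism with positive Jacobian $\det H_P$, rewrites the right side as $\int g(T_P(z))\,p(T_P(z))\det H_P(z)\,dz$. Since $g\circ T_P$ ranges over $C_c(\Omega)$, the identity $p(T_P)\det H_P = \rho$ holds a.e. on $\Omega$. Both sides are continuous on $\overline\Omega$, so it holds everywhere there.

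The main obstacle is the first step: global (up-to-the-boundary) $C^{3,\alpha}$ regularity of the potentials, including uniform convexity up to $\partial\Omega$. This is exactly where the convexity and $C^{2,1}$ boundary hypotheses enter, and I would not reprove it but cite \cite[Theorem 2.3]{gonzalez2026influence} directly, checking only that Definition \ref{def:regular} matches their hypotheses with closed supports.
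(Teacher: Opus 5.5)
Your proposal is correct and follows essentially the paper's route: the paper likewise takes the global regularity from \cite[Theorem 2.3]{gonzalez2026influence} and adds only the inverse and change-of-variables identities, which you derive in the expected elementary way. One small simplification is available for mutual inversion: the paper uses that $F_P$ is the almost-everywhere inverse of $T_P$ (Brenier's theorem for the absolutely continuous pair) and extends this by continuity, which avoids invoking strict convexity of $\varphi_P$ before the eigenvalue bounds are in hand.
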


\subsection{Potentials and the functional domain} \label{sec:potentialspace}
Define the zero-mean space
\begin{equation} \label{eq:V}
    \cV
    \coloneqq
    \{v \in H^1 (\Omega) \mid \mu v = 0\}
    ,
    \qquad
    \norm{v}_\cV
    \coloneqq
    \norm{\nabla v}_{L^2 (\mu)}
    .
\end{equation}
Under Assumption \ref{ass:base}, this is a Hilbert space with inner product
$\ip{u}{v}_\cV \coloneqq \int_\Omega \nabla u^\top \nabla v d \mu.$\footnote{Let $\bar v \coloneqq | \Omega |^{- 1} \int_\Omega v d z$ be the Lebesgue mean.
Since $\mu v = 0,$ the density bounds and Poincar\'e's inequality give $\norm{v}_{L^2 (\mu)} \leq 2 \norm{v - \bar v}_{L^2 (\mu)} \leq C \norm{\nabla v}_{L^2 (\mu)}$ (\cite{payne1960optimal}).
Thus $\norm{\cdot}_\cV$ is equivalent to the $H^1$ norm on $\cV.$
Since $v \mapsto \mu v$ is continuous on $H^1 (\Omega),$ its kernel $\cV$ is closed and hence complete.}

Brenier's theorem also characterizes the transport map: for every probability law $R$ on $\overline \Lambda,$ the map $T_R$ is $\mu$-almost everywhere the gradient of a convex function on $\Omega$ (\cite[Theorem 2.12]{villani2003topics}).
We call such a function a transport potential.
A transport potential is determined only up to an additive constant.
We choose the normalization $\mu \varphi_R = 0;$ this is one possible convention for selecting a representative.
For each probability law $R$ on $\overline \Lambda,$ let $\varphi_R$ denote the convex potential satisfying
$$
    \varphi_R \in \cV
    ,
    \qquad
    \nabla \varphi_R
    =
    T_R
    \quad
    \mu \text{-almost everywhere}
    .
$$
We use its continuous representative on $\overline \Omega.$
In particular, $H_P = D^2 \varphi_P.$
The next lemma establishes that this normalization is well-defined and records the bounds and continuity used below.
\begin{lemma}[Bounds and continuity of potentials] \label{lem:potentials}
Under Assumption \ref{ass:base}, put $L_\Lambda \coloneqq \sup_{y \in \overline \Lambda} \norm{y}$ and $M \coloneqq L_\Lambda \operatorname{diam} (\Omega).$
For every probability law $R$ on $\overline \Lambda,$ there is a unique normalized convex potential $\varphi_R \in \cV,$ with a continuous representative satisfying
$$
    \operatorname{Lip} (\varphi_R)
    \leq
    L_\Lambda
    ,
    \qquad
    \norm{\varphi_R}_\infty
    \leq
    M
    .
$$
The map $R \mapsto \varphi_R$ is continuous from weak convergence on $\overline \Lambda$ to the $\cV$ norm.
For any two probability laws $R, S$ on $\overline \Lambda,$
\begin{equation} \label{eq:normid}
    \norm{\varphi_R - \varphi_S}_\cV
    =
    \norm{T_R - T_S}_{L^2 (\mu)}
    .
\end{equation}
\end{lemma}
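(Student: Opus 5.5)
We prove the statements of Lemma \ref{lem:potentials} in the order: existence and uniqueness of the normalized potential, the Lipschitz and sup bounds, the norm identity \eqref{eq:normid}, and finally continuity.

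\emph{Existence, uniqueness, and bounds.} By Brenier's theorem, $T_R = \nabla \phi$ $\mu$-a.e.\ for some convex $\phi$, and we may take the representative $\phi(z) = \sup_{y \in \overline\Lambda} \{ z^\top y - \phi^\ast(y) \}$. Since $T_R$ takes values in $\overline\Lambda$ $\mu$-a.e.\ and $\mu$ has support $\overline\Omega$, the $c$-concave (Legendre) representative restricted to $\overline\Omega$ is a supremum of affine functions with slopes in $\overline\Lambda$. Hence $\operatorname{Lip}(\phi) \le L_\Lambda$ and $\phi$ is finite and continuous on $\overline\Omega$.

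Next we show uniqueness up to constants. If $\phi_1, \phi_2$ are convex with $\nabla\phi_1 = \nabla\phi_2$ a.e.\ on the connected open set $\Omega$, then their difference is Lipschitz with zero weak gradient, so it is constant. Subtracting $\mu\phi$ gives $\varphi_R \in \cV$. Uniqueness within $\cV$ follows since the constant must vanish.

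For the sup bound: since $\mu\varphi_R = 0$ and $\varphi_R$ is continuous, it vanishes at some $z_0 \in \overline\Omega$. Then $|\varphi_R(z)| \le L_\Lambda \norm{z - z_0} \le L_\Lambda \operatorname{diam}(\Omega) = M$.

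\emph{Norm identity.} Equation \eqref{eq:normid} is immediate from the definition $\norm{v}_\cV = \norm{\nabla v}_{L^2(\mu)}$, because $\nabla\varphi_R - \nabla\varphi_S = T_R - T_S$ $\mu$-a.e.

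\emph{Continuity.} By \eqref{eq:normid}, it suffices to show that $R_k \rightsquigarrow R$ on $\overline\Lambda$ implies $T_{R_k} \to T_R$ in $L^2(\mu)$.

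\begin{enumerate}[(i)]
\item The potentials $\varphi_{R_k}$ are equi-Lipschitz and uniformly bounded. By Arzel\`a--Ascoli, along any subsequence there is a further subsequence converging uniformly on $\overline\Omega$ to a convex, $L_\Lambda$-Lipschitz function $\phi$ with $\mu\phi = 0$.
\item Uniform convergence of convex functions implies a.e.\ convergence of gradients at points of differentiability of the limit, which is Lebesgue-a.e. Hence $\nabla\varphi_{R_k} \to \nabla\phi$ $\mu$-a.e.
\item The gradients are bounded by $L_\Lambda$, so dominated convergence gives $L^2(\mu)$ convergence.
\item For any bounded continuous $f$, dominated convergence also gives $R_k f = \mu (f \circ \nabla\varphi_{R_k}) \to \mu (f \circ \nabla\phi)$. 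Therefore $(\nabla\phi)_\#\mu = R$.
\item Since $\nabla\phi$ is the gradient of a convex function pushing $\mu$ to $R$, Brenier uniqueness gives $\nabla\phi = T_R$. The normalization then forces $\phi = \varphi_R$.
\end{enumerate}

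Because every subsequence has a further subsequence with the same limit, the whole sequence converges.

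\emph{Main obstacle.} The delicate step is (ii), a.e.\ gradient convergence under uniform convergence of convex functions. We would cite the standard result (e.g.\ Rockafellar, Theorem 24.5) or argue directly via the subdifferential inclusion $\limsup_k \partial\varphi_{R_k}(z) \subset \partial\phi(z)$, which is a singleton at differentiability points. Once this is in hand, the remaining steps are routine applications of dominated convergence and Brenier uniqueness.
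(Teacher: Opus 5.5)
Your proof is correct, and for the continuity assertion it takes a genuinely different route from the paper.

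\textbf{Continuity.} The paper obtains $T_{R_j}\to T_R$ in $\mu$-probability in one step from Villani's stability theorem for optimal plans (Corollary 5.23 of \emph{Optimal Transport: Old and New}). That theorem applies because the quadratic cost is bounded and continuous on $\overline\Omega\times\overline\Lambda$ and the optimal plan from $\mu$ to $R$ is unique. The paper then upgrades to $L^2(\mu)$ by boundedness and to $\cV$ by (\ref{eq:normid}). You instead reprove this stability inside the convex-potential setting:
\begin{itemize}
\item Arzel\`a--Ascoli on the equi-Lipschitz, equibounded potentials;
\item Rockafellar's subdifferential convergence theorem, giving a.e.\ gradient convergence on the countable intersection of the differentiability sets;
\item dominated convergence, to identify $(\nabla\phi)_\#\mu=R$;
\item Brenier--McCann uniqueness, together with a subsequence argument.
\end{itemize}
Your version is self-contained and yields the additional fact that $\varphi_{R_k}\to\varphi_R$ uniformly on $\overline\Omega$. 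The paper's citation is shorter and rests on a theorem valid for general costs.

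\textbf{Bounds.} Here the mechanisms also differ, though less substantially. The paper shows finiteness on $\Omega$ by a simplex argument. It then combines local Lipschitz continuity of finite convex functions with the bounded-weak-gradient characterization of Lipschitz functions. It gets $\norm{\varphi_R}_\infty\le M$ by averaging $\varphi_R(z)-\varphi_R(u)$ over $u\sim\mu$. You read the Lipschitz constant off a supremum of affine functions with slopes in $\overline\Lambda$. You get the sup bound from a zero of $\varphi_R$ via the intermediate value theorem.

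\textbf{Step to spell out.} You should justify why the restricted supremum $\tilde\phi(z)=\sup_{y\in\overline\Lambda}\{z^\top y-\phi^\ast(y)\}$ is still a potential of $R$:
\begin{itemize}
\item $\tilde\phi\le\phi$, with equality wherever $\nabla\phi(z)\in\overline\Lambda$, which holds on a dense subset of $\Omega$.
\item $\phi$ is finite on $\Omega$ because its convex domain contains a dense subset of the open convex set $\Omega$.
\item Both functions are therefore finite and convex, hence continuous, on $\Omega$, so they coincide there.
\end{itemize}
Alternatively, take the dual representative $b_R^\ast$ from (\ref{eq:dualpotential}) directly.
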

The proof is in Appendix \ref{app:regularity}.

The Kantorovich dual constructs the transport potential by optimizing over a pair of functions.
For a probability law $R$ on $\overline \Lambda,$ put $K_R \coloneqq \operatorname{supp} R.$
Since the marginal second moments do not depend on the coupling, minimizing quadratic cost is equivalent to maximizing $\int z^\top y  d \pi (z, y).$
Kantorovich duality gives
\begin{equation} \label{eq:transportdual}
    \sup_{\pi \in \Pi (\mu, R)} \int z^\top y  d \pi (z, y)
    = \inf\limits_{\substack{
        a \in C (\overline \Omega),\ b \in C (K_R) \\
        \forall z, y \ 
        a (z) + b (y) \geq z^\top y
    }} \left(\int_\Omega a  d \mu + \int_{K_R} b  d R\right),
\end{equation}
where $C (K)$ denotes the continuous real-valued functions on $K.$
Both sides are linear programs, and both extrema are attained; see \cite[Chapter 1]{villani2003topics}.

For a function $f$ on a compact set $K \subset \bR^d,$ its convex conjugate is $f^\ast (x) \coloneqq \sup_{y \in K} (x^\top y - f (y)).$
For a given $b,$ the smallest feasible choice of $a$ is therefore
\begin{equation} \label{eq:dualpotential}
    b^\ast (z) = \sup_{y \in K_R} (z^\top y - b (y)), \qquad z \in \overline \Omega.
\end{equation}
Eliminating $a$ gives the semi-dual problem
$$
    \inf\limits_{b \in C (K_R)} \left(\int_\Omega b^\ast  d \mu + \int_{K_R} b  d R\right).
$$
Its objective is convex in $b$ and generally nonlinear.
When $R$ has finite support, $b$ is a finite vector; this reduction underlies the methods in Remark \ref{rem:computation}.
For a minimizer $b_R,$ the gradient of $b_R^\ast$ transports $\mu$ to $R$ (\cite[Theorem 2.12]{villani2003topics}), so
$$
    \varphi_R = b_R^\ast - \int_\Omega b_R^\ast  d \mu.
$$
\par
The domain of the functional is the set of normalized transport potentials,
$$
    \cF
    \coloneqq
    \{\varphi_R \mid R \text{ is a probability law on } \overline \Lambda\}
    \subset
    \cV
    .
$$
For a functional $\cJ : \cF \to \bR,$ Borel measurable in the metric inherited from $\cV,$ define
\begin{equation} \label{eq:target}
    \theta (R)
    \coloneqq
    \cJ (\varphi_R)
    .
\end{equation}
Lemma \ref{lem:potentials} ensures that evaluating $\cJ$ at an empirical potential gives a measurable statistic.

\subsection{Functional assumptions}
We maintain Assumption \ref{ass:base} throughout this subsection.
Suppose a continuous linear form $D_P \in \cV^{\ast}$ approximates the change in the functional:
$$
    \cJ (\varphi_R)
    -
    \cJ (\varphi_P)
    \approx
    D_P (\varphi_R - \varphi_P)
    .
$$
To obtain a statistical limit, we seek a function $\psi_P$ for which this first variation is approximated by $(R - P) \psi_P.$
At $R = P_n,$ the latter is a centered sample average.
If $\psi_P$ is smooth on $\overline \Lambda$ and $P \psi_P = 0,$ the push-forward identities and a first-order Taylor expansion give
\begin{align*}
    (R - P) \psi_P
    &=
    \int_\Omega
        (\psi_P (T_R) - \psi_P (T_P))
    d \mu
    \\
    &\approx
    \int_\Omega
        \nabla \psi_P (T_P)^\top \nabla (\varphi_R - \varphi_P)
    d \mu
    \\
    &=
    \int_\Omega
        \nabla (\psi_P \circ T_P)^\top H_P^{- 1} \nabla (\varphi_R - \varphi_P)
    d \mu
    .
\end{align*}
The last equality uses the chain rule and symmetry of $H_P.$
Define the bilinear form
\begin{equation} \label{eq:bilinear}
    B_P (u, v)
    \coloneqq
    \int_\Omega
        \nabla u^\top H_P^{- 1} \nabla v
    d \mu
    ,
    \qquad
    u, v \in \cV
    .
\end{equation}
Writing $g_P \coloneqq \psi_P \circ T_P,$ the calculation yields $(R - P) \psi_P \approx B_P (g_P, \varphi_R - \varphi_P).$
Thus the desired approximation to $D_P (\varphi_R - \varphi_P)$ follows if we choose $g_P$ so that $B_P (g_P, v) = D_P (v)$ for every $v \in \cV,$ and then set $\psi_P \coloneqq g_P \circ F_P.$
This equation matches the two first-order terms exactly; the remaining task is to bound the two approximation errors.

The next proposition establishes existence and uniqueness of $g_P$ and the weak chain rule for $\psi_P.$
\begin{proposition}[Representation of linear forms] \label{prop:adjoint}
Under Assumption \ref{ass:base}, for each $D_P \in \cV^{\ast}$ there is a unique $g_P \in \cV$ such that
\begin{equation} \label{eq:adjoint}
    B_P (g_P, v)
    =
    D_P (v)
    \quad
    (v \in \cV)
    .
\end{equation}
The function $\psi_P \coloneqq g_P \circ F_P$ belongs to $H^1 (\Lambda),$ has $P \psi_P = 0,$ and satisfies
\begin{equation} \label{eq:chain}
    \nabla \psi_P (T_P)
    =
    H_P^{- 1} \nabla g_P
    \quad
    \mu \text{-almost everywhere}
    .
\end{equation}
\end{proposition}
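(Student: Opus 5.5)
Existence and uniqueness of $g_P$ will follow from the Lax--Milgram theorem (here just the Riesz representation theorem, since $B_P$ is symmetric) on the Hilbert space $(\cV, \ip{\cdot}{\cdot}_\cV)$. By Lemma \ref{lem:mapregularity}, $H_P$ is symmetric with $m_P I_d \preceq H_P \preceq M_P I_d$ on $\overline \Omega$, so $M_P^{-1} I_d \preceq H_P^{-1} \preceq m_P^{-1} I_d$. Hence
$$
    |B_P (u, v)| \leq m_P^{-1} \norm{u}_\cV \norm{v}_\cV
    \qquad \text{and} \qquad
    B_P (v, v) \geq M_P^{-1} \norm{v}_\cV^2 .
$$
Thus $B_P$ is a bounded, symmetric, coercive inner product on $\cV$, equivalent to $\ip{\cdot}{\cdot}_\cV$. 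Riesz representation of $D_P \in \cV^\ast$ with respect to $B_P$ gives a unique $g_P \in \cV$ satisfying \eqref{eq:adjoint}.

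For the properties of $\psi_P = g_P \circ F_P$, the plan is to prove the weak chain rule for composition with a $C^{1}$ bi-Lipschitz diffeomorphism, using approximation. By Lemma \ref{lem:mapregularity}, $F_P \in C^{2,\alpha}(\overline \Lambda)$ is a diffeomorphism $\Lambda \to \Omega$ with inverse $T_P$. Both Jacobians are bounded on the closed domains: $DF_P (y) = H_P (F_P (y))^{-1}$ has norm at most $m_P^{-1}$.

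Since $\Omega$ is a bounded convex (hence Lipschitz) domain, $C^\infty (\overline \Omega)$ is dense in $H^1 (\Omega)$. Take smooth $g_k \to g_P$ in $H^1 (\Omega)$. For each $k$, the classical chain rule gives
$$
    \nabla (g_k \circ F_P) = DF_P^\top (\nabla g_k \circ F_P) = H_P (F_P)^{-1} \nabla g_k (F_P),
$$
using symmetry of $H_P$. Changing variables $y = T_P (z)$, with $\det DT_P$ bounded above and below, gives
$$
    \norm{g_k \circ F_P - g_P \circ F_P}_{L^2 (\Lambda)} \leq C \norm{g_k - g_P}_{L^2 (\Omega)} ,
$$
and similarly for the gradients. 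Hence $g_k \circ F_P$ is Cauchy in $H^1 (\Lambda)$. Its limit must be $\psi_P$ (by $L^2$ convergence), with weak gradient $H_P (F_P)^{-1} \nabla g_P (F_P)$. This shows $\psi_P \in H^1 (\Lambda)$. Evaluating the gradient formula at $y = T_P (z)$ gives \eqref{eq:chain}. The statement holds $\mu$-almost everywhere because $T_P$ maps $\mu$-null sets to Lebesgue-null sets, as it is Lipschitz with Lipschitz inverse.

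Finally, $P \psi_P = \int g_P \circ F_P \circ T_P \, d\mu = \mu g_P = 0$. This uses $T_{P\#}\mu = P$ and $F_P \circ T_P = \id$ on $\overline \Omega$ (Lemma \ref{lem:mapregularity}), together with $g_P \in \cV$.

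The main obstacle is only technical: the careful justification of the $H^1$ chain rule under a diffeomorphism up to the boundary, and the bookkeeping of almost-everywhere statements across the change of variables. Both are handled by the density argument and the two-sided Jacobian bounds above.
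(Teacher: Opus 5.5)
Your proposal is correct and follows essentially the same route as the paper: the same two bounds on $B_P$ feed Lax--Milgram (your Riesz-with-respect-to-$B_P$ version is its symmetric special case), and the $H^1(\Lambda)$ bound, identity (\ref{eq:chain}), and $P\psi_P=\mu g_P=0$ come from the weak chain rule under the diffeomorphism $F_P$, which the paper cites from Gilbarg--Trudinger (7.19) and you prove directly by smooth approximation. One small wording fix: for the $\mu$-almost-everywhere statement, what you need is that $T_P^{-1}(N)=F_P(N)$ is $\mu$-null for every Lebesgue-null $N\subset\Lambda$; this follows from $F_P$ being Lipschitz, or simply from $(T_P)_\#\mu=P\ll$ Lebesgue measure.
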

The proof is in Appendix \ref{app:regularity}.
We call $g_P$ the adjoint of $D_P$ with respect to the bilinear form $B_P.$

We now impose separate bounds on the nonlinear functional remainder and the error in replacing the adjoint linear term by $(R - P) \psi_P.$
Both bounds hold for every comparison law on $\overline \Lambda,$ with constants independent of that law.
\begin{assumption}[Functional approximation] \label{ass:functional}
There exist $D_P \in \cV^{\ast},$ $r \in (1, 2],$ and $0 \leq C < \infty,$ independent of $R,$ such that the adjoint $g_P$ defined by (\ref{eq:adjoint}) satisfies (1)--(3):
\begin{enumerate}[(1)]
\item The adjoint $g_P$ has a continuous representative on $\overline \Omega.$
We use this representative to define $\psi_P \coloneqq g_P \circ F_P.$
\item The nonlinear functional remainder satisfies, for every probability law $R$ on $\overline \Lambda,$
\begin{equation} \label{eq:Jbound}
    | \cJ (\varphi_R)
    -
    \cJ (\varphi_P)
    -
    D_P (\varphi_R - \varphi_P) |
    \leq
    C \norm{T_R - T_P}_{L^2 (\mu)}^r
    .
\end{equation}
\item The adjoint remainder satisfies, for every probability law $R$ on $\overline \Lambda,$
\begin{equation} \label{eq:psibound}
    \left|
        \int_\Omega
            \left(
                \psi_P (T_R)
                -
                \psi_P (T_P)
                -
                \nabla \psi_P (T_P)^\top (T_R - T_P)
            \right)
        d \mu
    \right|
    \leq
    C \norm{T_R - T_P}_{L^2 (\mu)}^r
    .
\end{equation}
\end{enumerate}
\end{assumption}
Condition (1) gives $\psi_P = g_P \circ F_P$ a continuous, pointwise-defined representative, so $(R - P) \psi_P$ is well-defined even for discrete $R.$
Condition (2) requires a linear approximation to $\cJ$ whose error is of higher order than the $L^2 (\mu)$ map error, since $r > 1.$
By (\ref{eq:adjoint}) and (\ref{eq:chain}), condition (3) bounds the error in replacing this linear approximation by $(R - P) \psi_P.$

For $0 < \gamma \leq 1,$ condition (\ref{eq:Jbound}) holds with $r = 1 + \gamma$ and $D_P = D \cJ (\varphi_P)$ if $\cJ$ has a continuously Fr\'echet differentiable extension to an open convex neighborhood of $\cF$ in $\cV$ whose derivative is uniformly $\gamma$-H\"older continuous in the $\cV^\ast$ norm.
Likewise, $\psi_P \in C^{1, \gamma} (\overline \Lambda)$ implies (\ref{eq:psibound}) with $r = 1 + \gamma.$
Thus $C^{1, \alpha}$ regularity suffices for $r = 1 + \alpha,$ and Lipschitz derivatives give $r = 2.$

\section{Linearization of transport functionals} \label{sec:expansion}
We derive a linear expansion of the parameter in the target law, give sufficient conditions for its remainder bounds, and verify them for several classes of functionals.

\subsection{General expansion and map stability}
We first combine the functional and adjoint remainder bounds in Assumption \ref{ass:functional} to obtain a linear expansion in $R - P.$
Substantive sufficient conditions establishing these bounds are developed in Sections \ref{sec:riesz}–\ref{sec:examples}.
\begin{theorem}[Linear expansion] \label{thm:expansion}
Under Assumptions \ref{ass:base} and \ref{ass:functional}, for every probability law $R$ on $\overline \Lambda,$
\begin{equation} \label{eq:expansion}
    \theta (R)
    -
    \theta (P)
    =
    (R - P) \psi_P
    +
    \cR (R, P)
    ,
\end{equation}
where
$$
    | \cR (R, P) |
    \leq
    2C \norm{T_R - T_P}_{L^2 (\mu)}^r
    .
$$
\end{theorem}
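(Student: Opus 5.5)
The plan is to split $\theta(R)-\theta(P)-(R-P)\psi_P$ into the two remainders controlled by Assumption \ref{ass:functional} and show that their linear parts cancel exactly through the adjoint equation (\ref{eq:adjoint}) and the chain rule (\ref{eq:chain}). Write $\Delta \coloneqq \varphi_R - \varphi_P \in \cV$, so that $\nabla \Delta = T_R - T_P$ $\mu$-almost everywhere. Then
\begin{align*}
    \theta (R) - \theta (P) - (R - P) \psi_P
    & =
    \bigl[\cJ (\varphi_R) - \cJ (\varphi_P) - D_P (\Delta)\bigr]
    \\
    & \quad
    + \bigl[D_P (\Delta) - (R - P) \psi_P\bigr].
\end{align*}
Condition (2) bounds the first bracket by $C\norm{T_R - T_P}_{L^2(\mu)}^r$.

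For the second bracket, first rewrite $(R-P)\psi_P$ as an integral over $\Omega$. By condition (1), $\psi_P = g_P\circ F_P$ is continuous on $\overline\Lambda$, hence bounded and $R$-integrable for every law $R$ on $\overline\Lambda$, including discrete ones. Since $(T_R)_\#\mu = R$ and $(T_P)_\#\mu = P$, we get $(R-P)\psi_P = \int_\Omega (\psi_P(T_R) - \psi_P(T_P))\,d\mu$. This uses that $T_R$ takes values in $\overline\Lambda$ $\mu$-almost everywhere, which holds because its push-forward is supported there.

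Next, identify the linear term. By (\ref{eq:chain}) and symmetry of $H_P$,
$$
    \int_\Omega \nabla\psi_P(T_P)^\top (T_R - T_P)\, d\mu
    =
    \int_\Omega \nabla g_P^\top H_P^{-1} \nabla\Delta\, d\mu
    =
    B_P(g_P,\Delta)
    =
    D_P(\Delta),
$$
where the last equality is (\ref{eq:adjoint}) applied with $v = \Delta \in \cV$. This integral is finite by Cauchy--Schwarz, since $\nabla g_P \in L^2(\mu)$, $H_P^{-1}$ is bounded by Lemma \ref{lem:mapregularity}, and $T_R - T_P \in L^2(\mu)$. Hence the second bracket equals minus the integral in (\ref{eq:psibound}), and condition (3) bounds it by $C\norm{T_R - T_P}_{L^2(\mu)}^r$. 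The triangle inequality then gives $|\cR(R,P)| \leq 2C\norm{T_R - T_P}_{L^2(\mu)}^r$.

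The only delicate point is representative bookkeeping. The pointwise-defined $\psi_P$ of condition (1) must be the function whose weak gradient appears in (\ref{eq:chain}). Proposition \ref{prop:adjoint} defines $\psi_P$ via an $H^1$ class, so the continuous representative agrees with it almost everywhere and has the same weak gradient. Moreover, $\nabla\psi_P(T_P)$ is defined $\mu$-almost everywhere because $T_P$ is a diffeomorphism and therefore maps $\mu$-null sets to Lebesgue-null sets. With these checks, the splitting into the two remainders of Assumption \ref{ass:functional} is exact.
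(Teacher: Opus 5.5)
Your proposal is correct and is essentially the paper's proof: the same split into the functional remainder and the adjoint remainder, with the linear terms identified through (\ref{eq:adjoint}), (\ref{eq:chain}) and the push-forward identities, followed by the triangle inequality. One wording fix: $\nabla\psi_P(T_P)$ is well defined $\mu$-a.e.\ because preimages under $T_P$ of Lebesgue-null sets are $\mu$-null (since $(T_P)_\#\mu = P$ is absolutely continuous), not because $T_P$ maps null sets to null sets; in any case (\ref{eq:chain}) already asserts this.
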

The proof is in Appendix \ref{app:regularity}.

To control the remainder through the distance between $R$ and $P,$ we next bound the map error by $W_2 (R, P).$
\begin{lemma}[Map stability] \label{lem:stability}
Under Assumption \ref{ass:base}, there is a finite constant $C_P,$ independent of $R,$ such that, for every probability law $R$ on $\overline \Lambda,$
\begin{equation} \label{eq:stability}
    \norm{T_R - T_P}_{L^2 (\mu)}^2
    \leq
    C_P W_2^2 (R, P)
    .
\end{equation}
\end{lemma}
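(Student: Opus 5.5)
We prove the bound with the standard semi-dual stability argument, using that the population potential $\varphi_P$ is smooth and strongly convex (Lemma \ref{lem:mapregularity}).
Fix $R$ on $\overline \Lambda$ and let $\varphi_R$ be its normalized potential from Lemma \ref{lem:potentials}. Write $\varphi_P^\ast$ and $\varphi_R^\ast$ for the convex conjugates; the first is $\varphi_P^\ast(y)=\sup_{z\in\overline\Omega}(z^\top y-\varphi_P(z))$.

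First, because $H_P\succeq m_P I_d$ on the convex set $\overline\Omega$ and $\nabla\varphi_P$ is a diffeomorphism onto $\Lambda$, the conjugate $\varphi_P^\ast$ is $C^{2}$ on $\overline\Lambda$. Its gradient is $F_P$, and $D^2\varphi_P^\ast(y)=H_P(F_P(y))^{-1}\preceq m_P^{-1}I_d$. Hence $\varphi_P^\ast$ is $m_P^{-1}$-smooth on the convex set $\overline\Lambda$.

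The key step is the strong-convexity gap for the Fenchel--Young inequality. For $z\in\Omega$ and $y\in\overline\Lambda$,
$$
\varphi_P(z)+\varphi_P^\ast(y)-z^\top y\ \ge\ \tfrac{m_P}{2}\norm{y-T_P(z)}^2 .
$$
To prove it, set $x=T_P(z)\in\overline\Lambda$, so that $\varphi_P(z)=z^\top x-\varphi_P^\ast(x)$ and $z=\nabla\varphi_P^\ast(x)$. The left side then equals $\varphi_P^\ast(y)-\varphi_P^\ast(x)-\nabla\varphi_P^\ast(x)^\top(y-x)$.

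Here there is a subtlety: smoothness of $\varphi_P^\ast$ gives an upper bound on this Bregman gap, not a lower one. The correct lower bound instead comes from strong convexity of $\varphi_P^\ast$, which follows from $D^2\varphi_P^\ast\succeq M_P^{-1}I_d$ on the convex set $\overline\Lambda$. This gives the gap $\ge \frac{1}{2M_P}\norm{y-T_P(z)}^2$. I will use this version; checking that the conjugate restricted to $\overline\Lambda$ is the relevant function is the main technical obstacle.

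Now integrate the gap inequality against the optimal coupling $(\mathrm{id},T_R)_\#\mu$ of $\mu$ and $R$. This gives
$$
\tfrac{1}{2M_P}\norm{T_R-T_P}_{L^2(\mu)}^2\le \mu\varphi_P+R\varphi_P^\ast-\int z^\top T_R\,d\mu .
$$
By duality (\ref{eq:transportdual}), $\int z^\top T_R\,d\mu=\mu\varphi_R+R\varphi_R^\ast\ge$ the value at any feasible pair. Also $\mu\varphi_P=\int z^\top T_P\,d\mu-P\varphi_P^\ast$. The right side is therefore bounded by
$$
(R-P)\varphi_P^\ast-\Big(\int z^\top T_R\,d\mu-\int z^\top T_P\,d\mu\Big)=(R-P)\varphi_P^\ast-\big[S(R)-S(P)\big],
$$
where $S(Q)$ denotes the maximal correlation.

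To control this difference, couple $P$ and $R$ by an optimal $W_2$ plan $\gamma$. The plan $\gamma\circ(\mathrm{id},T_P)$ yields a coupling of $\mu$ and $R$, so
$$
S(R)\ge \int F_P(x)^\top y\,d\gamma(x,y).
$$
Therefore
$$
(R-P)\varphi_P^\ast-[S(R)-S(P)]\le\int\big(\varphi_P^\ast(y)-\varphi_P^\ast(x)-\nabla\varphi_P^\ast(x)^\top(y-x)\big)\,d\gamma .
$$
The $m_P^{-1}$-smoothness of $\varphi_P^\ast$ on the convex set $\overline\Lambda$ bounds the integrand by $\tfrac{1}{2m_P}\norm{y-x}^2$. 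Integrating gives $\tfrac{1}{2m_P}W_2^2(R,P)$, and so (\ref{eq:stability}) holds with $C_P=M_P/m_P$.
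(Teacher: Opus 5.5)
Your argument is correct and is essentially the paper's proof. Your final integrand is exactly the dual gap $G_P(F_P(x),y)$ with $G_P(z,y)=\varphi_P(z)+\varphi_P^\ast(y)-z^\top y$, and your bookkeeping with $S(R)$ and $S(P)$ unpacks the paper's remark that $(\id,T_R)_\#\mu$ minimizes $\int G_P\,d\pi$ over $\Pi(\mu,R)$ while $(F_P,\id)_\#\gamma$ is a competitor; this gives the same constant $C_P=M_P/m_P$.

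Remove two stray statements, neither of which the argument uses. The display with constant $\tfrac{m_P}{2}$ is false in general: for $\varphi_P(z)=\tfrac c2\norm{z}^2$ the gap equals $\tfrac1{2c}\norm{y-T_P(z)}^2$, so that display fails when $c>1$. Also, $\int z^\top T_R\,d\mu$ lies below, not above, the dual objective at any feasible pair $(a,b)$; what you actually need at that step is primal optimality of $T_R$.
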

The proof is in Appendix \ref{app:regularity}.

Combining Theorem \ref{thm:expansion} and Lemma \ref{lem:stability} gives
$$
    | \cR (R, P) | \leq 2 C C_P^{r / 2} W_2^r (R, P).
$$
At $R = P_n,$ the condition $\sqrt n W_2^r (P_n, P) \to 0$ in probability therefore makes the remainder negligible at the root-$n$ scale.
Section \ref{sec:statistics} provides the required rates.

\subsection{Sufficient conditions} \label{sec:riesz}
We give sufficient conditions for the functional-approximation bounds in Assumption \ref{ass:functional}.
First, recall that for every $D_P \in \cV^{\ast},$ the Riesz representation theorem gives a unique $h_P \in \cV$ such that
\begin{equation} \label{eq:riesz}
    D_P (v)
    =
    \ip{h_P}{v}_\cV
    =
    \int_\Omega
        \nabla h_P^\top \nabla v
    d \mu
    \quad
    (v \in \cV)
    ,
    \qquad
    \norm{h_P}_\cV
    =
    \norm{D_P}_{\cV^{\ast}}
    .
\end{equation}
We call $h_P$ the Riesz representer with respect to the inner product of $\cV.$

Combining (\ref{eq:adjoint}) and (\ref{eq:riesz}), finding the adjoint is equivalent to finding $g_P \in H^1 (\Omega)$ with $\mu g_P = 0$ such that
$$
    \int_\Omega \nabla g_P^\top H_P^{- 1} \nabla v  d \mu = \int_\Omega \nabla h_P^\top \nabla v  d \mu, \qquad v \in H^1 (\Omega).
$$
Note that allowing all $H^1 (\Omega)$ test functions is equivalent to testing on $\cV,$ since subtracting $\mu v$ leaves the gradient unchanged.
This identity is the weak formulation of the Neumann boundary-value problem
\begin{equation} \label{eq:rieszpde}
    \begin{cases}
        \div (\rho H_P^{- 1} \nabla g_P)
        =
        \div (\rho \nabla h_P) & \text{in } \Omega
        \\
        \rho {(H_P^{- 1} \nabla g_P)^\top \nu}
        =
        \rho {(\nabla h_P)^\top \nu} & \text{on } \partial \Omega
        \\
        \mu g_P
        =
        0
    \end{cases}
    ,
\end{equation}
where $\nu$ is the outward unit normal.
The coefficient matrix $\rho H_P^{- 1}$ is uniformly elliptic by Lemma \ref{lem:mapregularity}.
The displayed equations are understood through the preceding weak identity.
For smooth $g_P$ and $h_P,$ compactly supported tests first recover the interior equation; using this equation, integration by parts with tests having arbitrary boundary values recovers the normal-flux condition.
For the standard weak formulation of elliptic equations and the Neumann problem, see \cite[Section 6.1.2 and Section 6.6, Problem 4]{evans2010partial}.

Smoothness of the Riesz representer ensures the adjoint regularity needed for the distributional expansion.
\begin{proposition}[Smooth Riesz representer] \label{prop:riesz}
Under Assumption \ref{ass:base}, suppose there exist $D_P \in \cV^{\ast},$ $r \in (1, 2],$ and a finite constant independent of $R$ for which (\ref{eq:Jbound}) holds for every probability law $R$ on $\overline \Lambda.$
If the Riesz representer of $D_P$ belongs to $C^{2, \alpha} (\overline \Omega),$ then $\cJ$ satisfies Assumption \ref{ass:functional} with the same $D_P$ and $r.$
\end{proposition}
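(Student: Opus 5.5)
Since condition (2) of Assumption \ref{ass:functional} is assumed outright, what remains is conditions (1) and (3). Both will follow once the adjoint $g_P$ is shown to be regular enough that $\psi_P = g_P \circ F_P \in C^{1,1}(\overline \Lambda)$, or at least $C^{1,\gamma}$ with $1+\gamma \geq r$. As noted after Assumption \ref{ass:functional}, $\psi_P \in C^{1,1}(\overline\Lambda)$ gives (\ref{eq:psibound}) with exponent $2$. The constant is $\frac12 \operatorname{Lip}(\nabla\psi_P)$, which comes from Taylor's theorem along segments in the convex set $\overline\Lambda$ together with $\int \norm{T_R - T_P}^2 d\mu$. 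Moreover $\norm{T_R-T_P}_{L^2(\mu)} \leq \operatorname{diam}(\Lambda)$ is bounded, so a bound with exponent $2$ implies one with any $r\in(1,2]$, at the cost of the constant $\operatorname{diam}(\Lambda)^{2-r}$. Hence the goal is $g_P \in C^{2,\alpha}(\overline\Omega)$, or at least $C^{1,1}(\overline \Omega)$.

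\textbf{Step 1 (regularity of $g_P$).} By (\ref{eq:rieszpde}), $g_P$ is the weak solution of a Neumann problem for $\div(A\nabla g_P) = f$ in $\Omega$ with conormal data $A\nabla g_P\cdot\nu = \phi$ on $\partial\Omega$. Here $A = \rho H_P^{-1}$, $f = \div(\rho\nabla h_P)$ and $\phi = \rho\,\nabla h_P^\top\nu$.
\begin{itemize}
\item By Lemma \ref{lem:mapregularity}, $A \in C^{1,\alpha}(\overline\Omega)$ and $A$ is uniformly elliptic, since $\rho\in C^{1,\alpha}$ is bounded below and $H_P^{-1}\in C^{1,\alpha}$ has eigenvalues in $[M_P^{-1},m_P^{-1}]$.
\item Since $h_P \in C^{2,\alpha}(\overline\Omega)$, we have $f\in C^{0,\alpha}(\overline\Omega)$.
\item Since $\partial\Omega$ is $C^{2,1}$, $\nu$ is $C^{1,1}$ on the boundary, so $\phi\in C^{1,\alpha}(\partial\Omega)$.
\item The compatibility condition $\int_\Omega f = \int_{\partial\Omega}\phi$ holds automatically, because the data come from the divergence structure.
\end{itemize}
Under these conditions I would invoke the Schauder theory for oblique/conormal derivative problems, e.g.\ Gilbarg--Trudinger Theorem 6.31 with Lemma 6.29, or Lieberman's oblique derivative results. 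Writing the equation in nondivergence form $a^{ij}\partial_{ij}g + (\partial_i a^{ij})\partial_j g = f$, the coefficients are $C^{0,\alpha}$ and the boundary operator is $C^{1,\alpha}$ and strictly oblique, since $A\nu\cdot\nu>0$. The intended output is a unique $C^{2,\alpha}(\overline\Omega)$ solution up to constants.

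To connect this to the weak solution, I would argue as follows. A classical solution normalized by $\mu g=0$ satisfies the weak identity after integrating by parts, so it is a weak solution. Uniqueness in Proposition \ref{prop:adjoint} then identifies it with $g_P$. This yields condition (1), with the continuous representative being the classical one.

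\textbf{Step 2 (composition).} Lemma \ref{lem:mapregularity} gives $F_P\in C^{2,\alpha}(\overline\Lambda)$ with $F_P(\overline\Lambda) = \overline\Omega$. The chain rule then gives $\psi_P = g_P\circ F_P\in C^{2,\alpha}(\overline\Lambda)\subset C^{1,1}(\overline\Lambda)$, and the classical gradient agrees with (\ref{eq:chain}). Step 2 ends by concluding (3) with exponent $r$ via the Taylor bound described at the start.

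\textbf{Main obstacle.} The main obstacle is Step 1: citing a global Schauder estimate for the conormal Neumann problem under exactly these regularity levels. The coefficients are only $C^{1,\alpha}$, the boundary is $C^{2,1}$ rather than $C^{2,\alpha}$ smooth in the usual sense (though $C^{2,1}$ suffices), and existence is needed in addition to the a priori estimate. If a direct citation is awkward, I would first obtain $H^2$ regularity by difference quotients and then bootstrap with the Fredholm alternative for the oblique problem. The constants are nonissues throughout, since they depend only on $P$, $\mu$ and $h_P$, not on $R$.
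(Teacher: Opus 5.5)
Your proposal is correct and follows the paper's proof essentially step for step. Both use the same Neumann problem with coefficient $\rho H_P^{-1}$, forcing $\div(\rho\nabla h_P)$ and flux $\rho(\nabla h_P)^\top\nu$, obtain a $C^{2,\alpha}(\overline\Omega)$ solution, identify it with $g_P$ via integration by parts and uniqueness from Proposition \ref{prop:adjoint}, and finish with composition with $F_P$, Taylor's formula, and the $\operatorname{diam}(\Lambda)^{2-r}$ conversion to exponent $r$. The only difference is the existence citation: the paper extends the boundary datum to $C^{1,\alpha}(\overline\Omega)$ and applies \cite[Theorem 2.1]{kono2025well} to the Neumann problem directly. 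Gilbarg--Trudinger's Theorem 6.31 instead requires a nonzero Robin coefficient ($\gamma\beta_\nu>0$), so with that citation you would need your Fredholm-alternative fallback to get existence: index zero, kernel the constants, and range given by the compatibility condition.
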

The proof is in Appendix \ref{app:regularity}.
Elliptic regularity transfers smoothness from the prescribed functions in (\ref{eq:rieszpde}) to its solution.
When $h_P \in C^{2, \alpha} (\overline \Omega),$ the interior forcing belongs to $C^{0, \alpha}$ and the prescribed boundary flux to $C^{1, \alpha}.$
Together with the coefficient and boundary regularity in Assumption \ref{ass:base}, this yields $g_P \in C^{2, \alpha} (\overline \Omega)$ and hence $\psi_P = g_P \circ F_P \in C^{2, \alpha} (\overline \Lambda).$
Its bounded Hessian gives a quadratic Taylor remainder, which implies (\ref{eq:psibound}) for $r \leq 2$ because $\Lambda$ is bounded.

The smooth Riesz condition is equivalent to a representation of the first variation by smooth coefficients, which can be checked directly for the examples below.
\begin{proposition}[Smooth representations of the first variation] \label{prop:rieszfields}
Under Assumption \ref{ass:base}, let $D_P \in \cV^{\ast}$ and let $h_P$ be its Riesz representer.
Then $h_P \in C^{2, \alpha} (\overline \Omega)$ if and only if there exist $a_P \in C^{1, \alpha} (\overline \Omega; \bR^d)$ and $b_P \in C^{0, \alpha} (\overline \Omega)$ such that
\begin{equation} \label{eq:fields}
    D_P (v)
    =
    \int_\Omega
        (b_P v + a_P^\top \nabla v)
    d \mu
    \quad
    (v \in \cV)
    .
\end{equation}
\end{proposition}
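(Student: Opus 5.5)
The two implications need different tools: the forward direction is a direct computation, and the converse rests on elliptic regularity for a weighted Neumann problem.

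\emph{Forward direction.} Suppose $h_P \in C^{2, \alpha} (\overline \Omega).$ Then (\ref{eq:riesz}) already has the form (\ref{eq:fields}). Take $a_P \coloneqq \nabla h_P \in C^{1, \alpha} (\overline \Omega; \bR^d)$ and $b_P \coloneqq 0.$ If one prefers, one can integrate by parts to move the derivative onto $h_P$, but this is not needed.

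\emph{Converse: the Neumann problem.} Suppose (\ref{eq:fields}) holds with $a_P \in C^{1, \alpha}$ and $b_P \in C^{0, \alpha}.$ Then $h_P$ satisfies
\[
\int_\Omega \nabla h_P^\top \nabla v \, d\mu = \int_\Omega (b_P v + a_P^\top \nabla v)\, d\mu
\qquad \text{for all } v \in \cV.
\]
To test against all of $H^1 (\Omega)$, replace $b_P$ by $\tilde b_P \coloneqq b_P - \mu b_P$. This does not change the right-hand side on $\cV$, and it makes the identity hold for constants as well, since both sides then vanish. The identity is the weak form of the Neumann problem
\begin{align*}
\div (\rho \nabla h_P) &= \div (\rho a_P) - \rho \tilde b_P \quad \text{in } \Omega, \\
\rho \nabla h_P^\top \nu &= \rho a_P^\top \nu \quad \text{on } \partial\Omega.
\end{align*}
The compatibility condition holds because $\tilde b_P$ has $\mu$-mean zero. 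Set $w \coloneqq \rho a_P \in C^{1, \alpha}$, using $\rho \in C^{1, \alpha}$. Then the interior forcing $\div w - \rho \tilde b_P$ lies in $C^{0, \alpha}$, and the boundary datum $w^\top \nu$ lies in $C^{1, \alpha}(\partial \Omega)$, because $\partial\Omega$ is $C^{2,1}$ and so $\nu \in C^{1,1}$. The operator $\div(\rho \nabla \cdot)$ has $C^{1, \alpha}$ coefficients, is uniformly elliptic by the density bounds, and acts on a $C^{2,1}$ domain. Schauder theory for the oblique (Neumann) derivative problem, for instance Gilbarg--Trudinger Theorem 6.31 combined with uniqueness modulo constants, then gives a classical solution in $C^{2, \alpha} (\overline \Omega)$.

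\emph{Identifying the classical solution with $h_P$.} This is the main obstacle: existence and uniqueness of the $C^{2,\alpha}$ solution are standard, but one must check that it is the weak $H^1$ solution, i.e. equals $h_P$. I would do it in four steps.
\begin{enumerate}[(i)]
\item Obtain a classical solution $u$ by the continuity method or Fredholm alternative for the Neumann problem, applied to the operator in non-divergence form $\rho \Delta u + \nabla \rho^\top \nabla u$.
\item Integrate by parts to show that $u$ satisfies the same weak identity as $h_P$.
\item Normalize $\mu u = 0$.
\item Conclude $u = h_P$ by uniqueness of the Riesz representer, equivalently by coercivity on $\cV$.
\end{enumerate}
If the cited Schauder theorem requires a zeroth-order term, I would first solve $L u - \varepsilon u = f$ and then remove the term with a Fredholm argument. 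Alternatively, one can invoke directly the global regularity theorem for weak solutions of conormal problems, as in Lieberman's oblique-derivative theory, which upgrades $H^1$ weak solutions to $C^{2, \alpha}$ under exactly these data.
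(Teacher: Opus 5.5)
Your proposal is correct and follows the paper's proof: center $b_P$, pose the weighted Neumann problem with $C^{0,\alpha}$ forcing and $C^{1,\alpha}$ conormal datum, solve it in $C^{2,\alpha}(\overline\Omega)$, and identify the solution with $h_P$ by integration by parts and uniqueness of the Riesz representer; for the other implication, take $a_P = \nabla h_P$ and $b_P = 0$. The only difference is the solvability citation. The paper extends the boundary datum into $\overline\Omega$ (Gilbarg--Trudinger Lemma 6.38, which your route also needs) and invokes a dedicated Neumann well-posedness theorem; Gilbarg--Trudinger Theorem 6.31 asserts unique solvability and so cannot cover the pure Neumann problem, which makes your Fredholm step (index zero, kernel the constants, range equal to the compatible data) necessary rather than optional.
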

The proof is in Appendix \ref{app:regularity}.

Thus the smooth Riesz condition means that the first-order sensitivity of the functional can be written as an average of changes in potential values and gradients against spatially smooth weights $b_P$ and $a_P.$
For the smooth examples below, we identify these weights and separately bound the nonlinear remainder; Propositions \ref{prop:rieszfields} and \ref{prop:riesz} then imply Assumption \ref{ass:functional}.

The smooth Riesz condition is sufficient but not necessary for Assumption \ref{ass:functional}.
The subgroup evaluations in Example \ref{ex:subgroup} need not have a smooth Riesz representer, and the maximum tail correlation in Example \ref{ex:tail} has a representer whose gradient jumps across the boundary of the tail region.
Both examples nevertheless satisfy Assumption \ref{ass:functional} with $r = 4 / 3,$ as shown by controlling the contribution near that boundary.

\subsection{Functional examples} \label{sec:examples}
We give several examples satisfying Assumption \ref{ass:functional}.
Throughout this subsection, suppose Assumption \ref{ass:base} holds.

We first consider integrals of the transport map against a vector weight.
\begin{example}[Linear evaluations of the transport map] \label{ex:linear}
For $w \in C^{1, \alpha} (\overline \Omega; \bR^d),$ define
\begin{equation} \label{eq:linearJ}
    \cJ_w (\varphi)
    \coloneqq
    \int_\Omega
        w (z)^\top \nabla \varphi (z)
    d \mu (z)
    ,
    \qquad
    \varphi \in \cF
    .
\end{equation}
Then $\cJ_w$ satisfies Assumption \ref{ass:functional} with $r = 2.$
\end{example}
Its first variation is
$$
    D_P (v)
    \coloneqq
    \int_\Omega
        w^\top \nabla v
    d \mu
    ,
    \qquad
    v \in \cV
    .
$$
The proof is in Appendix \ref{app:regularity}.
Proposition \ref{prop:rieszfields} applies with $a_P \coloneqq w$ and $b_P \coloneqq 0.$
The nonlinear functional remainder is zero, and Proposition \ref{prop:riesz} controls the adjoint remainder.

More general summaries can depend nonlinearly on both the potential value and the outcome assigned to each source type.
\begin{example}[Smooth integrals] \label{ex:local}
Let $\ell \in C^{2, \alpha} (\overline \Omega \times [- M, M] \times \overline \Lambda),$ where $M \coloneqq L_\Lambda \operatorname{diam} (\Omega).$
Define
\begin{equation} \label{eq:localJ}
    \cJ_\ell (\varphi)
    \coloneqq
    \int_\Omega
        \ell (z, \varphi (z), \nabla \varphi (z))
    d \mu (z)
    ,
    \qquad
    \varphi \in \cF
    .
\end{equation}
Then $\cJ_\ell$ satisfies Assumption \ref{ass:functional} with $r = 2.$
\end{example}
Its first variation is
$$
    D_P (v)
    \coloneqq
    \int_\Omega
        (b_P (z) v (z) + a_P (z)^\top \nabla v (z))
    d \mu (z)
    ,
    \qquad
    v \in \cV
    ,
$$
where
$$
    a_P (z)
    \coloneqq
    \nabla_y \ell (z, \varphi_P (z), T_P (z))
    \qquad
    \text{and}
    \qquad
    b_P (z)
    \coloneqq
    \partial_s \ell (z, \varphi_P (z), T_P (z))
    .
$$
The proof is in Appendix \ref{app:regularity}.

Squared Wasserstein distance is a smooth integral for which the influence function can be written explicitly using the convex conjugate $\varphi_P^{\ast}.$
\begin{example}[Wasserstein cost] \label{ex:cost}
The choice $\ell (z, s, y) \coloneqq \norm{z - y}^2$ in Example \ref{ex:local} gives
\begin{equation} \label{eq:costfunctional}
    \cJ_{\mathrm W} (\varphi)
    \coloneqq
    \int_\Omega
        \norm{z - \nabla \varphi (z)}^2
    d \mu (z)
    ,
    \qquad
    \cJ_{\mathrm W} (\varphi_R)
    =
    W_2^2 (\mu, R)
    .
\end{equation}
Its first variation has $b_P = 0$ and $a_P (z) = 2 (T_P (z) - z).$
Put
\begin{equation} \label{eq:costscore}
    \psi_{\mathrm W, P} (y)
    \coloneqq
    \norm{y}^2
    -
    2 \varphi_P^{\ast} (y)
    -
    \int_\Lambda
        \left(\norm{u}^2 - 2 \varphi_P^{\ast} (u)\right)
    d P (u)
    .
\end{equation}
The corresponding centered adjoint is $g_{\mathrm W, P} \coloneqq \psi_{\mathrm W, P} \circ T_P.$

Example \ref{ex:local} gives Assumption \ref{ass:functional} with $r = 2.$
Thus (\ref{eq:costscore}) gives the influence function for squared Wasserstein distance.
The proof is in Appendix \ref{app:regularity}.
\end{example}
Summaries of potential values, such as their dispersion, fit the same framework of integral functionals.
\begin{example}[Potential summaries] \label{ex:potential}
Let $U \in C^{2, \alpha} ([- M, M]).$
The choice $\ell (z, s, y) \coloneqq U (s)$ gives
$$
    \cJ_U (\varphi)
    \coloneqq
    \int_\Omega
        U (\varphi (z))
    d \mu (z)
    ,
    \qquad
    D_P (v)
    =
    \int_\Omega
        U^\prime (\varphi_P (z)) v (z)
    d \mu (z)
    .
$$
Here $a_P = 0$ and $b_P = U^\prime (\varphi_P).$
Hence $\cJ_U$ satisfies Assumption \ref{ass:functional} with $r = 2$ by Example \ref{ex:local}.
Section \ref{sec:limitcomparison} derives the same limit from the potential central limit theorem of \cite[Theorem 2.4(ii)]{gonzalez2026empirical} and the functional delta method.
\end{example}
Averages over a subgroup of source types introduce a discontinuous weight, so the effect of its boundary requires separate control.
\begin{example}[Exact subgroup evaluations] \label{ex:subgroup}
Let $A$ be a nonempty domain such that $\overline A \subset \Omega$ and $\partial A$ is $C^{2, 1}.$
For ${\ell} \in C^{2, \alpha} (\overline \Omega \times \overline \Lambda),$ define $\cJ_{A, {\ell}} : \cF \to \bR$ by
\begin{equation} \label{eq:subgroup}
    \cJ_{A, {\ell}} (\varphi)
    \coloneqq
    \frac 1{\mu (A)}
    \int_A
        {\ell} (z, \nabla \varphi (z))
    d \mu (z)
    .
\end{equation}
Then $\cJ_{A, {\ell}}$ satisfies Assumption \ref{ass:functional} with $r = 4 / 3.$
\end{example}
Its first variation is
$$
    D_P (v)
    =
    \frac 1{\mu (A)}
    \int_A
        \nabla_y {\ell} (z, T_P (z))^\top \nabla v (z)
    d \mu (z)
    ,
    \qquad
    v \in \cV
    .
$$
The proof is in Appendix \ref{app:subgroup}.

For this example, finding the adjoint amounts to solving a transmission problem, with one equation on each side of $\partial A.$
Put
$$
    A_P \coloneqq \rho H_P^{- 1},
    \qquad
    f_P (z) \coloneqq \frac{\rho (z)}{\mu (A)} \nabla_y \ell (z, T_P (z)).
$$
Write $g_{P, +}$ and $g_{P, -}$ for the restrictions of $g_P$ to $A$ and $\Omega \setminus \overline A,$ respectively, and let $\nu_A$ be the outward unit normal to $A.$
The adjoint is the weak solution, normalized by $\mu g_P = 0,$ of
\begin{equation} \label{eq:subgrouptransmission}
    \begin{cases}
        \div (A_P \nabla g_{P, +}) = \div f_P & \text{in } A, \\
        \div (A_P \nabla g_{P, -}) = 0 & \text{in } \Omega \setminus \overline A, \\
        g_{P, +} = g_{P, -} & \text{on } \partial A, \\
        {\bigl(A_P (\nabla g_{P, +} - \nabla g_{P, -})\bigr)^\top \nu_A} = {f_P^\top \nu_A} & \text{on } \partial A, \\
        {(A_P \nabla g_{P, -})^\top \nu} = 0 & \text{on } \partial \Omega.
    \end{cases}
\end{equation}
The interface conditions require continuity of $g_P$ and prescribe the jump in its normal flux.

Appendix \ref{app:subgroup} shows that $g_P$ is continuous on $\overline \Omega$ and is $C^{1, 1 / 2}$ separately on $\overline A$ and $\overline{\Omega \setminus \overline A},$ although its gradient may jump at $\partial A.$
The image $\Sigma_P \coloneqq T_P (\partial A)$ separates outcomes assigned to source points in $A$ from outcomes assigned to its complement.
Because $F_P$ maps these two regions back to their respective source regions, $\psi_P = g_P \circ F_P$ is continuous on $\overline \Lambda$ and is $C^{1, 1 / 2}$ on the closure of each region separately.
Its gradient may jump at $\Sigma_P.$
Controlling the Taylor remainder near this surface gives the exponent $r = 4 / 3.$

A useful subgroup summary averages the inner product of source ranks and matched outcomes over the outer region of the reference ball.
\begin{example}[Maximum tail correlation] \label{ex:tail}
Let $\Omega$ be the unit ball and $\mu$ the uniform distribution on it.
For a tail mass $\tau \in (0, 1),$ put $a_\tau \coloneqq (1 - \tau)^{1 / d},$ $A_\tau \coloneqq {B_{a_\tau}},$ and $E_\tau \coloneqq \Omega \setminus \overline A_\tau.$
Define
\begin{equation} \label{eq:tail}
    \kappa_\tau (P)
    \coloneqq
    \cJ_\tau (\varphi_P)
    ,
    \qquad
    \cJ_\tau (\varphi)
    \coloneqq
    \frac 1 \tau
    \int_{E_\tau}
        z^\top \nabla \varphi (z)
    d \mu (z)
    .
\end{equation}
This is the maximum tail correlation with a uniform reference distribution considered by \cite[Section 4.1.2]{sadhu2024stability}; it averages rank-weighted outcomes over the outer $\tau$ fraction of reference ranks.
The functional $\cJ_\tau$ in (\ref{eq:tail}) satisfies Assumption \ref{ass:functional} with $r = 4 / 3.$
\end{example}
The proof is in Appendix \ref{app:subgroup}.

The first variation is
$$
    D_\tau (v) \coloneqq \frac 1 \tau \int_{E_\tau} z^\top \nabla v  d \mu, \qquad v \in \cV.
$$
Its Riesz representer, with $t_+ \coloneqq \max (t, 0),$ is
$$
    h_\tau (z) \coloneqq \frac{1}{2 \tau} \left(\left(\norm{z}^2 - a_\tau^2\right)_+ - \int_\Omega \left(\norm{u}^2 - a_\tau^2\right)_+  d \mu (u)\right).
$$
This function is Lipschitz and has zero $\mu$-mean, so $h_\tau \in \cV.$
Its weak gradient is $\nabla h_\tau (z) = \tau^{-1} z \indic_{E_\tau} (z),$ which gives $\ip{h_\tau}{v}_\cV = D_\tau (v)$ for every $v \in \cV.$
This gradient jumps across $\partial A_\tau,$ and hence $h_\tau$ does not belong to $C^{2, \alpha} (\overline \Omega).$
Proposition \ref{prop:riesz} therefore does not apply.
The required remainder bound follows by writing the integral over $E_\tau$ as the integral over $\Omega$ minus that over $A_\tau,$ and applying Examples \ref{ex:linear} and \ref{ex:subgroup}.

Maximum tail correlation was originally introduced by \cite{beirlant2020center} using the spherical-uniform reference law, whose radius is uniform on $[0, 1].$
The present example uses the volume-uniform law on the ball and hence a different transport map.
The spherical-uniform density is proportional to $\norm{z}^{1 - d}$ and is unbounded at the origin for $d \geq 2,$ so that reference law is excluded by Assumption \ref{ass:base}.

\subsection{Connection with tested-map linearizations} \label{sec:testedmap}
The weighted-map central limit theorem of \cite[Proposition 4.5]{gonzalez2026influence} concerns the linear functional
\begin{equation} \label{eq:testedfunctional}
    \cJ_{P, g} (\varphi) \coloneqq \int_\Omega (H_P^{-1} \nabla g)^\top \nabla \varphi  d \mu, \qquad g \in C^{1, \eta} (\overline \Omega), \quad 0 < \eta \leq 1,
\end{equation}
whose weight is held unchanged as the target law varies.
Its first variation is $D_{P, g} (v) \coloneqq B_P (g - \mu g, v),$ so its adjoint is $g - \mu g$ and its influence function is $\psi_{P, g} \coloneqq (g - \mu g) \circ F_P.$
The functional remainder is zero, and $\psi_{P, g} \in C^{1, \eta} (\overline \Lambda)$ gives the adjoint remainder bound with $r = 1 + \eta.$
Theorem \ref{thm:expansion} therefore recovers their equation (73),
\begin{equation} \label{eq:testedexpansion}
    \cJ_{P, g} (\varphi_{P_n}) - \cJ_{P, g} (\varphi_P) = (P_n - P) \psi_{P, g} + O \left(\norm{T_{P_n} - T_P}_{L^2 (\mu)}^{1 + \eta}\right).
\end{equation}
For $d \in \{2, 3\}$ and $1 + \eta > d / 2,$ Theorem \ref{thm:empirical} below yields their Gaussian limit, with variance $\operatorname{Var}_\mu (g).$
Our results also cover nonlinear functionals and discontinuous weights, as in Examples \ref{ex:local} and \ref{ex:subgroup}.

For atomic contaminations $(1 - t) P + t \delta_y,$ \cite[Theorem 3.1 and equation (74)]{gonzalez2026influence} obtain a pointwise right derivative of the map away from $F_P (y)$ and identify its integral against $H_P^{-1} \nabla g$ with $\psi_{P, g} (y).$
That map derivative requires a separate analysis of a PDE with a Dirac source; our expansion concerns the scalar functional.

\section{Empirical CLTs, inference, and bootstrap} \label{sec:statistics}
We estimate $\theta (P)$ by evaluating the transport potential of the empirical target distribution.
The expansion in Section \ref{sec:expansion} expresses the estimation error as a centered sample average plus a transport remainder.
Controlling this remainder gives Gaussian limits, simultaneous inference for several functionals, and bootstrap confidence intervals under the dimension and remainder conditions below.

\subsection{Empirical estimator and remainder control}
We construct the empirical potential by substituting the empirical target law into the transport dual, and then apply $\cJ$ to this potential.
The observations satisfy the following sampling assumption.
\begin{assumption}[Sampling] \label{ass:sampling}
The observations $Y_1, \ldots, Y_n$ are independent and identically distributed with law $P.$
\end{assumption}
Define the empirical target law by $P_n \coloneqq n^{- 1} \sum_{i = 1}^n \delta_{Y_i}.$
Substituting $P_n$ into (\ref{eq:transportdual}) gives the linear program
\begin{equation} \label{eq:empiricaldual}
    (a_n, b_n) \in \argmin\limits_{\substack{
        a \in C (\overline \Omega),\ b \in \bR^n \\
        \forall z, i \ a (z) + b_i \geq z^\top Y_i
    }} \left(\int_\Omega a  d \mu + \frac1n \sum_{i = 1}^n b_i\right).
\end{equation}
Regard $b_n$ as a function on the empirical support, with $b_n (Y_i) = b_{n, i}.$
The observations are distinct almost surely because $P$ has a density.
By (\ref{eq:dualpotential}), we may choose $a_n = b_n^\ast,$ where
$$
    b_n^\ast (z) = \max_{1 \leq i \leq n} (z^\top Y_i - b_{n, i}), \qquad
    \varphi_{P_n} = b_n^\ast - \int_\Omega b_n^\ast  d \mu.
$$
The empirical plug-in estimator evaluates this normalized potential:
\begin{equation} \label{eq:estimator}
    \widehat \theta_n \coloneqq \cJ (\varphi_{P_n}).
\end{equation}
This empirical construction is also used by \cite[Section 3.2]{manole2024plugin} for transport maps and Wasserstein costs and by \cite{gonzalez2026empirical} for transport potentials.
\begin{remark}[Computation] \label{rem:computation}
Although (\ref{eq:empiricaldual}) is infinite-dimensional, eliminating $a$ reduces it to a convex optimization problem in the $n$ weights $b_i.$
This problem can be solved efficiently by established algorithms for semidiscrete optimal transport, such as the damped Newton method of \cite[Algorithm 1 and Theorem 1.5]{kitagawa2019convergence}.
\end{remark}
Theorem \ref{thm:expansion} bounds the distributional remainder by the empirical transport-map error, which we control next.
The following bound combines Lemma \ref{lem:stability} with \cite[Theorem 1.1]{ambrosio2022quadratic} in dimension two and \cite[Theorem 1.1(ii)]{huesmann2025asymptotics} in higher dimensions; Appendix \ref{app:empiricalrate} verifies their hypotheses.
\begin{lemma}[Empirical transport error] \label{lem:empiricalrate}
Under Assumptions \ref{ass:base} and \ref{ass:sampling}, for every $d \geq 2$ there is a finite constant $C$ independent of $n$ such that for all $n \geq 2,$
\begin{equation} \label{eq:maprate}
    \bE \left[
        \norm{T_{P_n} - T_P}_{L^2 (\mu)}^2
    \right]
    \leq
    Ca_{n, d}
    ,
    \qquad
    a_{n, d}
    \coloneqq
    \begin{cases}
        \log n / n
        ,
        & d
        =
        2
        ,
        \\
        n^{- 2 / d}
        ,
        & d
        \geq
        3
    \end{cases}
    .
\end{equation}
\end{lemma}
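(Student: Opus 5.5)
The plan is to reduce the map error to a Wasserstein error through Lemma \ref{lem:stability}, and then cite the known empirical Wasserstein rates. By Lemma \ref{lem:stability}, applied pathwise with $R = P_n$ (a probability law on $\overline \Lambda$ almost surely), we have $\norm{T_{P_n} - T_P}_{L^2 (\mu)}^2 \leq C_P W_2^2 (P_n, P)$ with $C_P$ deterministic. Taking expectations, it suffices to show $\bE [W_2^2 (P_n, P)] \leq C a_{n, d}$ for $n \geq 2$. Measurability of the left side follows from the continuity of $R \mapsto \varphi_R$ in Lemma \ref{lem:potentials} together with the identity (\ref{eq:normid}).

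For $d = 2$, I would invoke \cite[Theorem 1.1]{ambrosio2022quadratic}. That result gives $\bE [W_2^2 (P_n, P)] \lesssim \log n / n$ for laws on a bounded connected Lipschitz (here convex) planar domain whose density is bounded above and below, and possibly H\"older continuous. Assumption \ref{ass:base} supplies exactly these properties for $P$. Indeed, $\Lambda$ is a bounded convex domain with $C^{2,1}$ boundary, hence Lipschitz, and $p \in C^{1, \alpha} (\overline \Lambda)$ satisfies $0 < \inf p \leq \sup p < \infty$. If their statement is normalized to the uniform law on a domain, I would transfer it through a bi-Lipschitz map carrying $P$ to that reference law. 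The Brenier map $F_P$ from Lemma \ref{lem:mapregularity} is a $C^{2,\alpha}$ diffeomorphism onto $\Omega$, so a Lipschitz push-forward changes $W_2$ by at most a constant factor. Combined with the density bounds, this gives the conclusion up to constants.

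For $d \geq 3$, I would invoke \cite[Theorem 1.1(ii)]{huesmann2025asymptotics}. It gives $\bE [W_2^2 (P_n, P)] \lesssim n^{-2/d}$ for densities bounded away from zero and infinity on bounded (Lipschitz/convex) domains. Alternatively, the crude route suffices here: any compactly supported law satisfies the Fournier--Guillin bound $\bE W_2^2 (P_n, P) \lesssim n^{-2/d}$ for $d \geq 5$. The cases $d = 3, 4$ need care, since $d = 4$ carries a logarithm in the general bound, and this is why the density lower bound and the cited sharp result are needed. Finally, $a_{n,d}$ is bounded below on $n \geq 2$ and $W_2^2 \leq \operatorname{diam}(\Lambda)^2$. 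So any finitely many small $n$ excluded by the cited asymptotic bounds can be absorbed into the constant $C$.

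The main obstacle is the hypothesis check for the cited theorems rather than the reduction itself. The cited results are stated for specific domain classes, such as Lipschitz or connected domains, or Riemannian manifolds with boundary, and for specific density regularity. I would verify domain regularity from convexity plus the $C^{2,1}$ boundary. I would also check that the constants depend only on $\Lambda$, $\inf p$, $\sup p$ and the H\"older norm, and not on $n$. If a cited result is only asymptotic, with $\limsup$ of $n \, \bE W_2^2 / \log n$ finite, I would convert it to the uniform-in-$n$ bound using the trivial diameter bound for small $n$.
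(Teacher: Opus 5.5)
Your proposal is correct and follows the same route as the paper. The paper applies Lemma \ref{lem:stability} with $R = P_n$, cites \cite[Theorem 1.1]{ambrosio2022quadratic} for $d = 2$ and \cite[Theorem 1.1(ii)]{huesmann2025asymptotics} for $d \geq 3$ to obtain $\bE [W_2^2 (P_n, P)] \leq C a_{n, d}$ for large $n$, and absorbs the finitely many remaining $n \geq 2$ using $W_2^2 (P_n, P) \leq \operatorname{diam} (\Lambda)^2$; your bi-Lipschitz transfer and Fournier--Guillin detours are not needed, since both cited results already allow H\"older densities bounded above and below on such domains.
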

Combining this transport bound with Theorem \ref{thm:expansion} controls the empirical remainder and hence the bias of the plug-in estimator.
\begin{proposition}[Empirical expansion and remainder bound] \label{prop:empiricalbound}
Under Assumptions \ref{ass:base}, \ref{ass:functional}, and \ref{ass:sampling}, for every $d \geq 2$ there is a finite constant $C$ independent of $n$ such that for all $n \geq 2,$
\begin{equation} \label{eq:empiricalexp}
    \widehat \theta_n
    -
    \theta (P)
    =
    (P_n - P) \psi_P
    +
    \cR (P_n, P)
    ,
\end{equation}
where
$$
    \bE [| \cR (P_n, P) |]
    \leq
    C a_{n, d}^{r / 2}
    .
$$
\end{proposition}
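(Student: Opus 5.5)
The plan is to apply Theorem \ref{thm:expansion} pointwise in the sample, with the random law $R = P_n$, and then take expectations using Lemma \ref{lem:empiricalrate} together with Jensen's inequality.

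First, fix a realization of $Y_1, \ldots, Y_n$. Since $P_n$ is a probability law on $\overline \Lambda$, Theorem \ref{thm:expansion} applies with $R = P_n$. Recall that $\theta (P_n) = \cJ (\varphi_{P_n}) = \widehat \theta_n$. The theorem then gives the identity (\ref{eq:empiricalexp}) with the bound
$$
    | \cR (P_n, P) | \leq 2 C \norm{T_{P_n} - T_P}_{L^2 (\mu)}^r ,
$$
where $C$ is the deterministic constant of Assumption \ref{ass:functional}, independent of $R$ and hence of the sample. The term $(P_n - P) \psi_P$ is well defined by Assumption \ref{ass:functional}(1), because $\psi_P$ has a continuous representative. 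I will also check measurability of the remainder. The quantity $\widehat \theta_n$ is measurable by Lemma \ref{lem:potentials} and the measurability of $\cJ$, and $(P_n - P)\psi_P$ is a measurable sample average. Hence $\cR (P_n, P)$, their difference, is measurable, and so its expectation makes sense.

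Second, take expectations. Since $r \in (1, 2]$, the map $x \mapsto x^{r/2}$ is concave on $[0, \infty)$. Jensen's inequality applied to the nonnegative variable $X_n \coloneqq \norm{T_{P_n} - T_P}_{L^2 (\mu)}^2$ gives
$$
    \bE [ X_n^{r/2} ] \leq ( \bE X_n )^{r/2} \leq (C' a_{n, d})^{r/2} ,
$$
where the last step uses Lemma \ref{lem:empiricalrate} for all $n \geq 2$. Alternatively, since $\norm{T_{P_n} - T_P}_{L^2(\mu)} \leq 2 L_\Lambda$, one could use boundedness, but Jensen already suffices. Combining the two steps yields $\bE | \cR (P_n, P) | \leq 2 C C'^{r/2} a_{n, d}^{r/2}$, which is the claimed bound with a constant independent of $n$.

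There is essentially no hard step, since all the analytic content sits in Theorem \ref{thm:expansion} and Lemma \ref{lem:empiricalrate}. The only points needing care are these:
\begin{enumerate}[(i)]
\item the constant in Assumption \ref{ass:functional} must be uniform over all laws $R$, so that it can be applied to the random law $P_n$;
\item the map error $\norm{T_{P_n} - T_P}_{L^2(\mu)}$ must be measurable, which follows from (\ref{eq:normid}) and the continuity of $R \mapsto \varphi_R$ in Lemma \ref{lem:potentials}.
\end{enumerate}
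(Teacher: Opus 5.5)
Your proposal is correct and matches the paper's proof: you apply Theorem \ref{thm:expansion} at $R = P_n$, then bound $\bE \norm{T_{P_n} - T_P}_{L^2 (\mu)}^r$ by Jensen's inequality (using concavity of $x \mapsto x^{r/2}$ for $r \leq 2$) and Lemma \ref{lem:empiricalrate}. Your remarks on measurability and on the uniformity in $R$ of the constant in Assumption \ref{ass:functional} are appropriate details that the paper leaves implicit.
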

The proof is in Appendix \ref{app:empiricalrate}.
The leading term of the right-hand side is the mean-zero sample average
$$
    (P_n - P) \psi_P
    =
    \frac 1n
    \sum_{i = 1}^n
        \psi_P (Y_i)
    ,
    \qquad
    P \psi_P = 0
    .
$$
Its fluctuation is $O_P (n^{- 1 / 2}),$ whereas the remainder is $O_P (a_{n, d}^{r / 2}) = o_P (1).$
Thus the estimator is consistent in every dimension.

At the root-$n$ scale, the available remainder bound is
\begin{equation} \label{eq:dimensionthreshold}
    \sqrt n  a_{n, d}^{r / 2}
    =
    \begin{cases}
        n^{(1 - r) / 2} (\log n)^{r / 2}
        ,
        & d
        =
        2
        ,
        \\
        n^{1 / 2 - r / d}
        ,
        & d
        \geq
        3
    \end{cases}
    .
\end{equation}
It tends to zero for $d = 2$ and $r > 1,$ or for $d = 3$ and $r > 3 / 2.$
In dimensions $d \geq 4,$ no $r \in (1, 2]$ makes this bound vanish.
The distributional expansion remains valid there, but this bound alone does not establish a root-$n$ approximation.

\subsection{Gaussian limits and joint inference} \label{sec:examplesinference}
When the scaled remainder $\sqrt{n} \cR (P_n, P)$ vanishes, the centered sample average determines the limiting distribution.
The following theorem gives the Gaussian limit when the transport remainder is negligible at the root-$n$ scale.
\begin{theorem}[Empirical central limit theorem] \label{thm:empirical}
Suppose Assumptions \ref{ass:base}, \ref{ass:functional}, and \ref{ass:sampling} hold, $d \in \{2, 3\},$ and $r > d / 2.$
Then
$$
    \sqrt n (\widehat \theta_n - \theta (P))
    =
    \frac{1}{\sqrt{n}}
    \sum_{i = 1}^n
        \psi_P (Y_i)
    +
    o_P (1)
    ,
$$
and consequently
\begin{equation} \label{eq:clt}
    \sqrt n (\widehat \theta_n - \theta (P))
    {\rightsquigarrow}
    N (0, \sigma_P^2)
    ,
\end{equation}
where
$$
    \sigma_P^2
    \coloneqq
    P \psi_P^2
    =
    \mu g_P^2
    .
$$
\end{theorem}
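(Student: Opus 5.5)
The plan is to start from the empirical expansion of Proposition \ref{prop:empiricalbound}, multiply by $\sqrt n$, and show that the scaled remainder vanishes in probability. After that, the classical Lindeberg--L\'evy central limit theorem for the i.i.d. average $n^{-1/2}\sum_{i=1}^n \psi_P(Y_i)$ gives the conclusion.

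\textbf{Step 1: the remainder.} By Proposition \ref{prop:empiricalbound}, $\bE|\cR(P_n,P)| \le C a_{n,d}^{r/2}$. Markov's inequality then gives $\sqrt n\,\cR(P_n,P) = O_P(\sqrt n\, a_{n,d}^{r/2})$, and the rate is read off from (\ref{eq:dimensionthreshold}).
\begin{itemize}
\item For $d=2$ the rate is $n^{(1-r)/2}(\log n)^{r/2}$, which tends to zero because $r>1 = d/2$.
\item For $d=3$ the rate is $n^{1/2-r/3}$, which tends to zero because $r>3/2=d/2$.
\end{itemize}
Hence $\sqrt n(\widehat\theta_n-\theta(P)) = n^{-1/2}\sum_i \psi_P(Y_i) + o_P(1)$, using $(P_n-P)\psi_P = n^{-1}\sum_i\psi_P(Y_i)$, which holds because $P\psi_P=0$.

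\textbf{Step 2: the leading term.} Two facts are needed.
\begin{itemize}
\item $\psi_P$ is a genuine, pointwise defined random-variable transform. This holds because Assumption \ref{ass:functional}(1) makes $\psi_P = g_P\circ F_P$ continuous on the compact set $\overline\Lambda$, so it is bounded.
\item $P\psi_P^2<\infty$ and $P\psi_P=0$. These follow from boundedness and Proposition \ref{prop:adjoint}.
\end{itemize}
The Lindeberg--L\'evy CLT then gives $n^{-1/2}\sum_i\psi_P(Y_i)\rightsquigarrow N(0,P\psi_P^2)$, and Slutsky's lemma transfers this limit to $\sqrt n(\widehat\theta_n-\theta(P))$.

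\textbf{Step 3: the variance identity.} The identity $P\psi_P^2=\mu g_P^2$ is a change of variables. Since $T_{P\#}\mu = P$ and $F_P\circ T_P=\id$ on $\overline\Omega$ (Lemma \ref{lem:mapregularity}), we have $\int \psi_P^2\,dP = \int (g_P\circ F_P\circ T_P)^2\,d\mu = \int g_P^2\,d\mu$.

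\textbf{Main obstacle.} The only delicate point is the measurability and well-definedness of $\widehat\theta_n$ and of $\cR(P_n,P)$ as random variables. This is handled by the Borel measurability of $\cJ$, together with the continuity of $R\mapsto\varphi_R$ from Lemma \ref{lem:potentials}, since $P_n$ depends measurably on the sample. Everything else is a direct combination of earlier results; the dimensional restriction $r>d/2$ enters only through the rate computation in Step 1.
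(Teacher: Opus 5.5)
Your proposal is correct and follows essentially the same route as the paper. The paper also applies Markov's inequality to the expected remainder bound of Proposition~\ref{prop:empiricalbound} together with the rates in (\ref{eq:dimensionthreshold}), then uses the classical central limit theorem and Slutsky's theorem for the bounded, centered $\psi_P = g_P \circ F_P$, and finishes with a push-forward identity for $P \psi_P^2 = \mu g_P^2$; the only cosmetic difference is that you write this change of variables via $(T_P)_\# \mu = P$ and $F_P \circ T_P = \id$, where the paper uses $(F_P)_\# P = \mu$, and these are equivalent.
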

The proof is in Appendix \ref{app:empiricalrate}.
\begin{remark}[Degeneracy of the Gaussian limit] \label{rem:zerovariance}
The Gaussian limit in Theorem \ref{thm:empirical} is the point mass at zero precisely when the first variation vanishes:
$$
    \sigma_P^2 = 0 \quad \Longleftrightarrow \quad D_P (v) = 0 \text{ for every } v \in \cV.
$$
Indeed, $\sigma_P^2 = \mu g_P^2 = 0$ implies $g_P = 0$ and hence $D_P (v) = B_P (g_P, v) = 0$ for every $v \in \cV.$
Conversely, if $D_P$ vanishes on $\cV,$ uniqueness in Proposition \ref{prop:adjoint} gives $g_P = 0,$ so $\sigma_P^2 = 0.$

For example, the Wasserstein cost $\cJ_{\mathrm W} (\varphi) = \int_\Omega \norm{z - \nabla \varphi (z)}^2 d \mu (z)$ has first variation
$$
    D_P (v) = 2 \int_\Omega (T_P (z) - z)^\top \nabla v (z)  d \mu (z), \qquad v \in \cV.
$$
When $P = \mu,$ we have $T_P (z) = z,$ so $D_P = 0$ and the Gaussian limit is degenerate, i.e., $\sqrt{n} W_2^2 (P_n, P) \rightsquigarrow 0.$
See also \cite[discussion following Theorem 22]{manole2024plugin}.
\end{remark}
Under Assumptions \ref{ass:base} and \ref{ass:sampling} and the respective hypotheses of the examples, the linear vector-weight evaluations in Example \ref{ex:linear}, the smooth integrals in Example \ref{ex:local}, the Wasserstein cost in Example \ref{ex:cost}, and the potential summaries in Example \ref{ex:potential} have $r = 2.$
Theorem \ref{thm:empirical} therefore gives their population-centered Gaussian limits in dimensions two and three.

The exact subgroup evaluations in Example \ref{ex:subgroup} have $r = 4 / 3,$ so the same theorem applies in dimension two.
This also covers the maximum tail correlation in Example \ref{ex:tail}, which averages the inner product of source ranks and matched outcomes over the outer part of the reference ball and has the same remainder exponent.

The same expansions permit simultaneous inference for a finite collection of evaluations, with dependence determined by their influence functions.
\begin{corollary}[Several functionals] \label{cor:joint}
Suppose Assumptions \ref{ass:base} and \ref{ass:sampling} hold and $d \in \{2, 3\}.$
For an integer $m \geq 1,$ let $\cJ_1, \ldots, \cJ_m$ be functionals satisfying Assumption \ref{ass:functional} with exponents $r_j > d / 2$ and influence functions $\psi_{P, j}.$
For $\theta_j (P) \coloneqq \cJ_j (\varphi_P)$ and $\widehat \theta_{n, j} \coloneqq \cJ_j (\varphi_{P_n}),$
$$
    \sqrt n
    \left(
        \widehat \theta_{n, j}
        -
        \theta_j (P)
    \right)_{j = 1}^m
    {\rightsquigarrow}
    N_m (0, \Sigma)
    ,
    \qquad
    \Sigma_{jk}
    \coloneqq
    P (\psi_{P, j} \psi_{P, k})
    .
$$
\end{corollary}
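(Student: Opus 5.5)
The plan is to reduce the joint statement to the scalar expansions already established, and then apply the multivariate central limit theorem together with the Cram\'er--Wold device. For each $j = 1, \ldots, m,$ Proposition \ref{prop:empiricalbound} gives
$$
    \widehat \theta_{n, j} - \theta_j (P) = (P_n - P) \psi_{P, j} + \cR_j (P_n, P),
    \qquad
    \bE [| \cR_j (P_n, P) |] \leq C_j a_{n, d}^{r_j / 2}.
$$
Because $r_j > d / 2$ and $d \in \{2, 3\},$ the display (\ref{eq:dimensionthreshold}) gives $\sqrt n a_{n, d}^{r_j / 2} \to 0.$ Markov's inequality then yields $\sqrt n \cR_j (P_n, P) = o_P (1)$ for each $j.$ Since $m$ is finite, the vector of scaled remainders is $o_P (1)$ in $\bR^m.$ This is exactly the argument behind Theorem \ref{thm:empirical}, applied coordinatewise.

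It remains to treat the leading vector
$$
    n^{- 1 / 2} \sum_{i = 1}^n \bigl(\psi_{P, 1} (Y_i), \ldots, \psi_{P, m} (Y_i)\bigr).
$$
Its summands are i.i.d. under Assumption \ref{ass:sampling}. They are mean zero by Proposition \ref{prop:adjoint}. They have finite second moments, because each $\psi_{P, j}$ is continuous on the compact set $\overline \Lambda$ by Assumption \ref{ass:functional}(1), and is therefore bounded. In any case $P \psi_{P, j}^2 = \mu g_{P, j}^2 < \infty.$ The multivariate Lindeberg--L\'evy central limit theorem then gives convergence to $N_m (0, \Sigma)$ with $\Sigma_{jk} = P (\psi_{P, j} \psi_{P, k}).$ Equivalently, for each $c \in \bR^m$ one can apply the scalar CLT to $\sum_j c_j \psi_{P, j}$ and conclude via Cram\'er--Wold. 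Finally, Slutsky's lemma combines the leading term with the $o_P (1)$ remainder vector.

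There is no real obstacle here, since all the analytic work sits in Proposition \ref{prop:empiricalbound}. The only points to check are the following.
\begin{itemize}
\item The constants and exponents may differ across $j,$ but finiteness of $m$ makes this harmless.
\item The remainders are controlled jointly through the same random $P_n.$ No independence between coordinates is needed, because convergence in probability of each coordinate implies convergence of the vector.
\end{itemize}
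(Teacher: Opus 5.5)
Your proposal is correct and follows essentially the same route as the paper. You bound the remainders coordinatewise via Proposition~\ref{prop:empiricalbound} and the threshold (\ref{eq:dimensionthreshold}), then apply the multivariate central limit theorem to the bounded, centered, i.i.d.\ vectors $\boldsymbol \psi_P (Y_i)$; the paper's proof does exactly this.
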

The proof is in Appendix \ref{app:empiricalrate}.

\subsection{Bootstrap and confidence intervals}
We use the empirical bootstrap to construct confidence intervals from the Gaussian limits.
Conditionally on $Y_1, \ldots, Y_n,$ draw $Y_1^{\ast}, \ldots, Y_n^{\ast}$ independently from $P_n$ and set
$$
    P_n^{\ast}
    \coloneqq
    \frac 1n
    \sum_{i = 1}^n
        \delta_{Y_i^{\ast}}
    ,
    \qquad
    \widehat \theta_n^{\ast}
    \coloneqq
    \cJ (\varphi_{P_n^{\ast}})
    .
$$
The potential $\varphi_{P_n^{\ast}}$ uses the same normalization as $\varphi_{P_n},$ so that $\mu \varphi_{P_n^{\ast}} = 0.$

The centered bootstrap statistic is $S_n^{\ast} \coloneqq \sqrt n (\widehat \theta_n^{\ast} - \widehat \theta_n).$
Write $\bP^{\ast} (\cdot) \coloneqq \bP (\cdot \mid Y_1, \ldots, Y_n)$ and $\bE^{\ast}$ for conditional probability and expectation under the resampling law.
Let $\mathrm{BL}_1 (\bR)$ denote the functions $f : \bR \to \bR$ with $\norm{f}_\infty \leq 1$ and $\operatorname{Lip} (f) \leq 1.$
The bounded Lipschitz distance metrizes weak convergence, so convergence of the conditional bootstrap law in this distance, in probability, is the standard criterion for bootstrap consistency; see \cite[Section 3.6]{van1996weak}.

The following theorem establishes bootstrap consistency under the same conditions as the empirical central limit theorem and justifies the resulting confidence intervals.
\begin{theorem}[Empirical bootstrap] \label{thm:bootstrap}
Suppose Assumptions \ref{ass:base}, \ref{ass:functional}, and \ref{ass:sampling} hold, $d \in \{2, 3\},$ and $r > d / 2.$
For $Z \sim N (0, \sigma_P^2),$
$$
    \sup_{f \in \mathrm{BL}_1 (\bR)}
        \left|
            \bE^{\ast} [f (S_n^{\ast})]
            -
            \bE [f (Z)]
        \right|
    \xrightarrow{\bP}
    0
    .
$$
For $0 < u < 1,$ define the conditional quantile
$$
    q_{n, u}^{\ast}
    \coloneqq
    \inf
    \{t \in \bR \mid \bP^{\ast} (S_n^{\ast} \leq t) \geq u\}
    .
$$
If $\sigma_P^2 > 0,$ the interval
\begin{equation} \label{eq:CI}
    \left[
        \widehat \theta_n
        -
        \frac{q_{n, 1 - \gamma / 2}^{\ast}}{\sqrt n}
        ,
        \quad
        \widehat \theta_n
        -
        \frac{q_{n, \gamma / 2}^{\ast}}{\sqrt n}
    \right]
\end{equation}
has coverage converging to $1 - \gamma$ for every $0 < \gamma < 1.$
\end{theorem}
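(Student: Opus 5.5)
We apply the linear expansion of Theorem \ref{thm:expansion} twice, once at $R = P_n^{\ast}$ and once at $R = P_n$, both around the population law $P$. Both $P_n$ and $P_n^{\ast}$ are probability laws on $\overline \Lambda$, so subtracting the two expansions gives
$$
    S_n^{\ast}
    =
    \sqrt n (P_n^{\ast} - P_n) \psi_P
    +
    \sqrt n \bigl(\cR (P_n^{\ast}, P) - \cR (P_n, P)\bigr)
    .
$$
Expanding around $P$ rather than $P_n$ avoids any need for regularity of $P_n$, which Assumption \ref{ass:base} would not supply. The leading term is the bootstrap sample average $n^{-1/2} \sum_{i} (\psi_P (Y_i^{\ast}) - P_n \psi_P)$. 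The function $\psi_P$ is continuous on the compact set $\overline \Lambda$ by Assumption \ref{ass:functional}(1), hence bounded. The classical bootstrap CLT for means (Bickel--Freedman, or \cite[Section 3.6]{van1996weak}) then gives conditional weak convergence to $N (0, P \psi_P^2)$ in bounded Lipschitz distance, in probability; this uses only $P \psi_P^2 < \infty$ and $P_n \psi_P^2 \to P \psi_P^2$ almost surely.

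The remainders are controlled as follows. Proposition \ref{prop:empiricalbound} together with $r > d/2$ already gives $\sqrt n \cR (P_n, P) = o_P (1)$; this term is deterministic given the data. For the bootstrap remainder we use $|\cR (P_n^{\ast}, P)| \le 2C \norm{T_{P_n^{\ast}} - T_P}_{L^2 (\mu)}^r$, so it suffices to show that
$$
    \bE \norm{T_{P_n^{\ast}} - T_P}_{L^2 (\mu)}^2 \lesssim a_{n, d}
    .
$$
The key observation is that, unconditionally, $P_n^{\ast}$ is a mixture over the multinomial resampling weights. Since the bootstrap draws $Y_1^{\ast}, \ldots, Y_n^{\ast}$ are exchangeable, and their distinct values form an i.i.d. sample from $P$ of random size $N_n$ with $N_n \approx (1 - e^{-1}) n$, I would bound the quantity via Lemma \ref{lem:stability} by $C_P \bE W_2^2 (P_n^{\ast}, P)$. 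The triangle inequality then reduces this to
$$
    2 C_P \bigl(\bE W_2^2 (P_n^{\ast}, P_n) + \bE W_2^2 (P_n, P)\bigr)
    .
$$
The second term is controlled by the rates underlying Lemma \ref{lem:empiricalrate}. For the first, the standard empirical-measure bounds (Fournier--Guillin, or Ajtai--Koml\'os--Tusn\'ady type bounds for $d = 2$) apply to resampling from the bounded support $\overline \Lambda$, and give $\bE^{\ast} W_2^2 (P_n^{\ast}, P_n) \lesssim a_{n, d}$ uniformly in the data. This is the step I expect to be the main obstacle. Lemma \ref{lem:empiricalrate} gives the fine rate $\log n / n$ for $d = 2$ via \cite{ambrosio2022quadratic}, whose hypotheses concern sampling from a regular density, whereas $P_n$ is discrete. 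The cleanest route is therefore the exchangeability reduction: conditional on the multiplicity pattern, $P_n^{\ast}$ is a weighted empirical measure of $N_n$ i.i.d. points, and for $d = 2$ this is in fact a weighted average of $W_2^2$ quantities. Failing that, a crude bound with $r > d/2$ and some slack still suffices. For $d = 3$ the general rate $n^{-2/3}$ holds for any measure on a bounded set, and $\sqrt n\, n^{-r/3} \to 0$ since $r > 3/2$. For $d = 2$ the general bound $\log n / n$ for arbitrary measures supported in a bounded set also holds, by AKT-type dyadic arguments, so $\sqrt n (\log n / n)^{r/2} \to 0$. Markov's inequality then gives $\bP^{\ast} (\sqrt n |\cR (P_n^{\ast}, P)| > \epsilon) \to 0$ in probability.

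It remains to combine the pieces and pass to coverage. A bounded Lipschitz function changes by at most $\min (2, |\Delta|)$ under a perturbation $\Delta$, and this transfers the conditional limit from the leading average to $S_n^{\ast}$, proving the first display. For the confidence interval, when $\sigma_P^2 > 0$ the limit law is continuous, so conditional convergence in probability implies $q_{n, u}^{\ast} \to \sigma_P z_u$ in probability, where $z_u$ is the standard normal quantile. Combining this with Theorem \ref{thm:empirical} and Slutsky's lemma, the coverage probability
$$
    \bP \bigl(q_{n, \gamma/2}^{\ast} \le \sqrt n (\widehat \theta_n - \theta (P)) \le q_{n, 1 - \gamma/2}^{\ast}\bigr)
$$
converges to $1 - \gamma$.
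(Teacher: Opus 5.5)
\textbf{Gap: the bootstrap Wasserstein rate.} Most of your argument matches the paper: the two expansions at $P_n^{\ast}$ and $P_n$ around $P$, the bootstrap CLT for the leading mean, the bounded-Lipschitz transfer, and the quantile and coverage step. The gap is the step you flagged as the main obstacle, the bound $\bE\, W_2^2(P_n^{\ast},P)\lesssim a_{n,d}$. Your argument reduces it to $\bE^{\ast} W_2^2(P_n^{\ast},P_n)\lesssim a_{n,d}$ \emph{uniformly in the data}. You justify this by saying the rates $n^{-2/3}$ ($d=3$) and $\log n/n$ ($d=2$) hold for arbitrary measures on a bounded set, and that claim is false. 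Those rates are proved for laws like $P$, with a density bounded above and below on a convex domain. For arbitrary laws on a bounded set in $d\le 3$, the sharp rate for $\bE\,W_2^2$ is $n^{-1/2}$ (Fournier--Guillin's bound, attained by two-point laws).

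Concretely, suppose the $n$ data points form two equal clusters of small diameter at distance $c$. Then $W_2^2(P_n^{\ast},P_n)\gtrsim c^2\,|\widehat p-1/2|$ with $n\widehat p\sim\mathrm{Bin}(n,1/2)$, so $\bE^{\ast} W_2^2(P_n^{\ast},P_n)\gtrsim n^{-1/2}$. This gives $\sqrt n\,(n^{-1/2})^{r/2}=n^{(2-r)/4}\not\to 0$ for every $r\le 2$. So no uniform-in-data bound works, and there is no ``slack'' to fall back on. The argument must use that the data themselves come from the regular law $P$.

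\textbf{How the paper closes it, and a repair of your route.} The paper uses the unconditional inequality of \cite[Lemma 2.9]{gonzalez2026empirical}, $\bE\,W_2^2(P_n^{\ast},P)\le 4\,\bE\,W_2^2(P_n,P)\le C a_{n,d}$. Lemma~\ref{lem:stability}, Jensen, and Markov then give $\bE|S_n^{\ast}-L_n^{\ast}|\le C\sqrt n\, a_{n,d}^{r/2}=o(1)$.

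Your exchangeability remark points to a valid alternative, but only if you bound $W_2^2(P_n^{\ast},P)$ directly rather than passing through $W_2^2(P_n^{\ast},P_n)$. Let $M_j$ be the resampling multiplicities, which are independent of the data, and put $N_k\coloneqq\#\{j\mid M_j=k\}$. Conditionally on the $M_j$, $P_n^{\ast}=\sum_k \lambda_k Q_k$ with $\lambda_k\coloneqq kN_k/n$, where $Q_k$ is the empirical measure of $N_k$ i.i.d.\ draws from $P$. Joint convexity of $W_2^2$ gives $\bE[W_2^2(P_n^{\ast},P)\mid M]\le\sum_k\lambda_k\,\bE\,W_2^2(Q_k,P)$. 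The i.i.d.\ rates underlying Lemma~\ref{lem:empiricalrate} then yield $a_{n,d}$ up to logarithmic factors, which suffices because $r>d/2$. As written, though, your proposal does not carry this out; it rests on the false uniform bound.
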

The proof is in Appendix \ref{app:empiricalrate}.
The same dimension and remainder conditions give both the Gaussian approximation and consistent bootstrap quantiles.

A practical advantage of the empirical bootstrap is that confidence intervals require neither solving the adjoint equation nor estimating the influence function or its variance.
Each resample only requires recomputing the empirical transport by the methods in Remark \ref{rem:computation} and evaluating $\cJ.$

\subsection{Comparison with existing limit theorems} \label{sec:limitcomparison}
We compare the scope of Theorem \ref{thm:empirical} with existing limit theorems for empirical potentials and Wasserstein costs.
The comparison concerns which scalar functionals are covered by potential convergence and when cost limits can be centered at the population value.

\noindent
\textbf{Potential limits.}
Potential-level limit theorems also yield scalar limits through the functional delta method.
Suppose additionally that $\varphi_P$ extends to a strongly convex function on $\bR^d$ that is $C^{3, \alpha}$ on a neighborhood of $\overline \Omega.$
Then \cite[Theorem 2.4(ii)]{gonzalez2026empirical} prove
\begin{equation} \label{eq:potentialCLT}
    \sqrt n (\varphi_{P_n} - \varphi_P) \rightsquigarrow Z_P \quad \text{in } L_0^q (\mu),
\end{equation}
where $Z_P$ is centered Gaussian, $1 < q < 2$ when $d = 2,$ and $1 < q < 6 / 5$ when $d = 3.$
For the potential summary in Example \ref{ex:potential}, $D_P (v) = \mu [U^\prime (\varphi_P) v]$ extends continuously to $L_0^q (\mu),$ and the functional delta method yields
$$
    \sqrt n \left(\cJ_U (\varphi_{P_n}) - \cJ_U (\varphi_P)\right)
    \rightsquigarrow D_P (Z_P).
$$
A calculation gives $D_P (Z_P) \sim N (0, P \psi_P^2),$ the same limiting law as in Theorem \ref{thm:empirical}.

On the other hand, an $L^q$ limit for potentials does not generally control functionals that depend on their gradients.
For the linear evaluation in Example \ref{ex:linear}, integration by parts gives
\par
$$
    D_P (v)
    =
    \int_\Omega
        \rho w^\top \nabla v
     d z
    =
    - \int_\Omega
        v  \div (\rho w)
     d z
    +
    \int_{\partial \Omega}
        v \rho {w^\top \nu}
     d S
    .
$$
Here $d S$ denotes surface measure.
The boundary integral is not controlled by the $L^q$ norm; subgroup evaluations have an analogous term on $\partial A.$
For Example \ref{ex:linear}, the boundary term is absent when $\rho {w^\top \nu} = 0$ on $\partial \Omega.$
Otherwise, the potential limit alone does not provide this delta-method argument.
Our expansion treats such evaluations through the adjoint and a remainder measured by the map error.

\noindent
\textbf{Wasserstein-cost limits.}
For $\theta (P) \coloneqq W_2^2 (\mu, P),$ the estimator is $W_2^2 (\mu, P_n).$
Under Assumptions \ref{ass:base} and \ref{ass:sampling}, Theorem \ref{thm:empirical} gives, in $d = 2, 3,$
\begin{equation} \label{eq:costclt}
    \sqrt n \left(W_2^2 (\mu, P_n) - W_2^2 (\mu, P)\right)
    {\rightsquigarrow}
    N (0, \sigma_{\mathrm W, P}^2)
    ,
\end{equation}
where $\sigma_{\mathrm W, P}^2 \coloneqq P \psi_{\mathrm W, P}^2$ and $\psi_{\mathrm W, P}$ is the centered cost-dual potential in (\ref{eq:costscore}).
This conclusion for the same empirical estimator is already implied by \cite[Theorems 3.2 and 5.5, Example 5.7]{hundrieser2024unifying}.
Note that their result does not require our density or boundary smoothness.
\cite[Theorem 22(iii)]{manole2024plugin} obtain the same population-centered limit for positive bounded densities on a common cube or flat torus in $d \leq 3,$ assuming a $C^2$ population potential.
Their higher-dimensional results use density-based estimators and additional smoothness.
Thus Example \ref{ex:cost} recovers an established limit, while our general expansion covers other evaluations of the matching.

In general dimension, \cite[Theorem 4.1]{del2019central} prove
$$
    \sqrt n
    \left(W_2^2 (\mu, P_n) - \bE [W_2^2 (\mu, P_n)]\right)
    {\rightsquigarrow}
    N (0, \sigma_{\mathrm W, P}^2)
    .
$$
Their positive-density, convex-support, and $(4 + \epsilon)$-moment assumptions hold under Assumption \ref{ass:base}.
The estimator and variance are the same, but the centering is its expectation rather than the population cost.
For every $d \geq 2$ and $n \geq 2,$ Assumptions \ref{ass:base} and \ref{ass:sampling} give
\begin{equation} \label{eq:costbias}
    0
    \leq
    \bE [W_2^2 (\mu, P_n)]
    -
    W_2^2 (\mu, P)
    \leq
    C a_{n, d}
    .
\end{equation}
The upper bound follows from Proposition \ref{prop:empiricalbound} with $r = 2;$ convexity of $R \mapsto W_2^2 (\mu, R)$ gives the lower bound by Jensen's inequality.
This permits population centering in $d = 2, 3$ because $\sqrt n a_{n, d} \to 0,$ but does not establish a negligible bias for $d \geq 4.$

\section{Efficiency} \label{sec:efficiency}
Under the conditions of Theorem \ref{thm:empirical}, the empirical plug-in estimator has a Gaussian limit with variance $P \psi_P^2.$
We show that this variance is the semiparametric efficiency bound, obtained by taking the supremum of the information bounds over one-dimensional perturbations of the target law.
The source law $\mu$ is known.

Fix $P$ satisfying Assumption \ref{ass:base}.
Following \cite[Example 3.11.1]{van1996weak}, consider the bounded, mean-zero scores
$$
    \cH_P \coloneqq \{\eta \in L^\infty (P) \mid P \eta = 0\},
$$
equipped with the $L^2 (P)$ inner product, and the submodels
\begin{equation} \label{eq:scorepath}
    d P_{t, \eta} = (1 + t \eta)  d P,
\end{equation}
for sufficiently small $t > 0.$
The functional $\theta$ is differentiable at $P$ relative to these submodels if there is a continuous linear form $\dot\theta_P$ on $\cH_P$ such that
$$
    \frac{\theta (P_{t, \eta}) - \theta (P)}{t}
    \longrightarrow \dot\theta_P (\eta),
    \qquad t \to 0,
    \qquad \eta \in \cH_P.
$$
Its semiparametric efficiency bound is
$$
    \sup_{\substack{\eta \in \cH_P \\ P \eta^2 > 0}}
    \frac{|\dot\theta_P (\eta)|^2}{P \eta^2}.
$$
The Gaussian law with this variance gives the convolution and local asymptotic minimax bounds stated below.

For each $\eta \in \cH_P,$ set $P_{n, \eta} \coloneqq P_{1 / \sqrt n, \eta}$ for sufficiently large $n.$
Write $\bE_{n, \eta}$ for expectation under $P_{n, \eta}^{\otimes n}.$
An estimator $\widetilde \theta_n$ is regular for these submodels if
$$
    \sqrt n \left(\widetilde \theta_n - \theta (P_{n, \eta})\right)
    \rightsquigarrow L_P
    \quad \text{under } P_{n, \eta}^{\otimes n},
    \qquad \eta \in \cH_P,
$$
for a limiting law $L_P$ independent of $\eta.$
A nonnegative loss $\ell$ on $\bR$ is bowl-shaped if it is lower semicontinuous and its sublevel sets are convex and symmetric about zero.
\begin{theorem}[Efficiency bounds and attainment] \label{thm:efficiency}
Suppose Assumptions \ref{ass:base} and \ref{ass:functional} hold.
Set $\sigma_P^2 \coloneqq P \psi_P^2$ and let $Z \sim N (0, \sigma_P^2).$
The following statements hold.
\begin{enumerate}[label=(\roman*)]
\item 
The functional $\theta$ is differentiable at $P$ relative to the submodels (\ref{eq:scorepath}), with $\dot\theta_P (\eta) = P (\eta \psi_P).$
Its semiparametric efficiency bound is $\sigma_P^2.$
\item 
Every regular estimator has limiting law $N (0, \sigma_P^2) \ast M_P^\prime$ for some probability law $M_P^\prime,$ where $\ast$ denotes convolution.
\item 
For every sequence of measurable estimators $\widetilde \theta_n : \overline \Lambda^n \to \bR$ and every bowl-shaped loss $\ell,$
\begin{equation} \label{eq:localminimax}
    \sup_{\substack{I \subset \cH_P \\ 0 < |I| < \infty}} \liminf_{n \to \infty} \sup_{\eta \in I}
    \bE_{n, \eta} \left[\ell \left(\sqrt n (\widetilde \theta_n - {\theta (P_{n, \eta})})\right)\right] \geq \bE [\ell (Z)].
\end{equation}
\item 
If, in addition, Assumption \ref{ass:sampling} holds, $d \in \{2, 3\},$ and $r > d / 2,$ then $\widehat \theta_n$ is regular with limiting law $N (0, \sigma_P^2).$
For every bounded continuous loss $\ell$ and every nonempty finite $I \subset \cH_P,$
$$
    \lim_{n \to \infty} \sup_{\eta \in I} \bE_{n, \eta} \left[\ell \left(\sqrt n (\widehat \theta_n - {\theta (P_{n, \eta})})\right)\right] = \bE [\ell (Z)].
$$
\par
\end{enumerate}
\end{theorem}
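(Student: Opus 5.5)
The plan is to reduce everything to Theorem \ref{thm:expansion} applied along the submodels, and then invoke the classical convolution and local asymptotic minimax theorems for differentiable functionals (\cite[Section 3.11]{van1996weak}).

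\textbf{Step (i): pathwise differentiability.} Fix $\eta \in \cH_P$. For $0 < t < \norm{\eta}_\infty^{-1}$, $P_{t,\eta}$ is a probability law on $\overline \Lambda$. Theorem \ref{thm:expansion} with $R = P_{t,\eta}$ gives
$$
\theta (P_{t,\eta}) - \theta (P) = t P (\eta \psi_P) + \cR (P_{t,\eta}, P),
\qquad
| \cR | \leq 2C \norm{T_{P_{t,\eta}} - T_P}_{L^2 (\mu)}^r .
$$
By Lemma \ref{lem:stability}, this is at most $2C C_P^{r/2} W_2^r (P_{t,\eta}, P)$. On the bounded set $\overline \Lambda$, $W_2^2 (P_{t,\eta}, P) \leq \operatorname{diam} (\Lambda)^2 \, \mathrm{TV} (P_{t,\eta}, P) \leq \operatorname{diam}(\Lambda)^2 t \norm{\eta}_{L^1 (P)}$. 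This only yields $W_2^r = O (t^{r/2})$, which is $o(t)$ only when $r > 2$, so it is not enough.

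A sharper bound is needed: $W_2 (P_{t,\eta}, P) = O (t)$. I would obtain it by a Benamou--Brenier / negative Sobolev argument. Since $P$ has density bounded below on the convex domain $\Lambda$, one has $W_2 (P, P') \lesssim \norm{p - p'}_{\dot H^{-1}}$ (Peyre's inequality). The perturbation $t \eta p$ has mean zero and is bounded, and on a bounded Lipschitz domain $L^2_0 \hookrightarrow \dot H^{-1}$ via the Neumann problem. Hence $W_2 (P_{t,\eta}, P) \leq C t \norm{\eta}_{L^2(P)}$, the remainder is $O (t^r) = o (t)$, and $\dot\theta_P (\eta) = P (\eta \psi_P)$. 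This linear form is $L^2(P)$-continuous.

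Because $\psi_P$ is continuous on the compact set $\overline \Lambda$ (Assumption \ref{ass:functional}(1)) and $P \psi_P = 0$, we have $\psi_P \in \cH_P$. By Cauchy--Schwarz, the efficiency bound equals $P \psi_P^2$, attained at $\eta = \psi_P$. If $\sigma_P^2 = 0$ the bound is trivially zero.

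\textbf{Steps (ii)--(iii).} The path (\ref{eq:scorepath}) is differentiable in quadratic mean with score $\eta$, so the model is LAN. The tangent set $\cH_P$ is a linear space and $\psi_P$ lies in it, so $\psi_P$ is the efficient influence function. The convolution theorem and the local asymptotic minimax theorem \cite[Theorems 3.11.2 and 3.11.5]{van1996weak} then give (ii) and (iii) directly.

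\textbf{Step (iv): regularity of the plug-in estimator.} By Theorem \ref{thm:empirical}, under $P$,
$$
\sqrt n (\widehat \theta_n - \theta (P)) = n^{-1/2} \sum_i \psi_P (Y_i) + o_P (1).
$$
By LAN and contiguity of $P_{n,\eta}^{\otimes n}$, the $o_P(1)$ term remains negligible under $P_{n,\eta}^{\otimes n}$. Le Cam's third lemma gives
$$
\sqrt n (\widehat \theta_n - \theta (P)) \rightsquigarrow N (P(\eta\psi_P), \sigma_P^2)
$$
under $P_{n,\eta}^{\otimes n}$. Step (i) gives $\sqrt n (\theta (P_{n,\eta}) - \theta (P)) \to P (\eta \psi_P)$. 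Subtracting yields regularity with limit $N (0, \sigma_P^2)$. The statement about bounded continuous losses then follows by weak convergence, separately for each $\eta$ in the finite set $I$.

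\textbf{Main obstacle.} The key difficulty is the $O(t)$ Wasserstein bound in step (i). The crude total-variation bound only gives $t^{r/2}$, so the negative-Sobolev comparison, which uses the density lower bound from Assumption \ref{ass:base}, is essential.
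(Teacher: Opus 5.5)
Your proposal is correct and follows the paper's proof essentially step by step. The paper also gets $W_2 (P_{t, \eta}, P) = O (t)$ from the negative-Sobolev comparison: it uses \cite[Lemma 2.2]{ambrosio2022quadratic}, the same Peyre-type inequality you invoke, followed by Poincar\'e's inequality. It then combines this with Lemma~\ref{lem:stability} and Theorem~\ref{thm:expansion}, and concludes with local asymptotic normality, \cite[Theorems 3.11.2 and 3.11.5]{van1996weak}, and Le Cam's third lemma for (iv).
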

Under (iv), the empirical estimator attains the semiparametric variance bound $\sigma_P^2$ and the risk bound for bounded continuous bowl-shaped losses.
The proof is in Appendix \ref{app:efficiency}.

\section{Conclusion} \label{sec:conclusion}
We developed a direct approach to inference for scalar functionals of quadratic optimal transport with a known reference distribution.
A linear elliptic equation identifies the influence function, and transport-map error bounds control the remainder.
Under the stated conditions, this yields Gaussian limits, valid empirical bootstrap inference, and semiparametric efficiency of the empirical plug-in estimators.
The examples include smooth evaluations as well as subgroup and tail functionals with discontinuous weights.

These results permit scalar inference despite the absence of a nonzero weak limit for the normalized map error in $L^2.$
Extensions to higher dimensions and to an estimated reference distribution are left for future work.

\appendix

\section{Proofs} \label{sec:proofs}

\subsection{Transport regularity and linear expansions} \label{app:regularity}
\begin{proof}[Proof of Lemma \ref{lem:potentials}]
\setlength{\parindent}{24pt}
Brenier's theorem gives a convex potential with gradient $T_R.$
Positivity of $\rho$ makes this potential finite throughout $\Omega:$ every interior point lies in a simplex whose vertices belong to its full-measure finite domain.
Since $T_R \in \overline \Lambda$ almost everywhere, its gradient is bounded by $L_\Lambda.$
A finite convex function is locally Lipschitz, so its almost-everywhere gradient is also its weak gradient.
The bounded-gradient characterization of Lipschitz functions (see the proof of \cite[Section 5.8.2, Theorem 4]{evans2010partial}), applied on convex subdomains compactly contained in $\Omega,$ therefore shows that the potential is $L_\Lambda$-Lipschitz on $\Omega.$
It therefore extends continuously to $\overline \Omega.$
Subtracting its $\mu$-mean gives $\varphi_R \in \cV,$ and
$$
    | \varphi_R (z)|
    =
    \left|
        \int_\Omega
            (\varphi_R (z) - \varphi_R (u))
        d \mu (u)
    \right|
    \leq
    L_\Lambda \operatorname{diam} (\Omega)
    .
$$
Brenier uniqueness identifies the gradients of any two such potentials; Poincar\'e's inequality and the normalization then identify the potentials.
Equation (\ref{eq:normid}) follows from the definition of $\norm{\cdot}_\cV.$

We next prove continuity of $R \mapsto \varphi_R.$
Let $R_j {\rightsquigarrow} R.$
By \cite[Corollary 5.23]{villani2009optimal}, $T_{R_j} \to T_R$ in $\mu$-probability: the quadratic cost is continuous and bounded on $\overline \Omega \times \overline \Lambda,$ and Brenier's theorem gives a unique optimal plan from $\mu$ to $R.$
Since all these maps take values in the bounded set $\overline \Lambda,$ this convergence also holds in $L^2 (\mu).$
Equation (\ref{eq:normid}) then gives $\varphi_{R_j} \to \varphi_R$ in $\cV.$
\end{proof}
\begin{proof}[Proof of Proposition \ref{prop:adjoint}]
\setlength{\parindent}{24pt}
Lemma \ref{lem:mapregularity} implies
$$
    | B_P (u, v) |
    \leq
    m_P^{- 1} \norm{u}_\cV \norm{v}_\cV
    ,
    \qquad
    B_P (v, v)
    \geq
    M_P^{- 1} \norm{v}_\cV^2
    .
$$
Boundedness and coercivity on the Hilbert space $\cV$ give the existence and uniqueness of $g_P$ by Lax--Milgram (\cite[Section 6.2.1]{evans2010partial}).

We now prove that $\psi_P \in H^1 (\Lambda)$ and satisfies (\ref{eq:chain}).
Lemma \ref{lem:mapregularity} makes $F_P : \Lambda \to \Omega$ a $C^1$ diffeomorphism with $DF_P = (H_P \circ F_P)^{- 1}.$
The weak change-of-variables rule (\cite[Section 7.3, equation (7.19)]{gilbarg2001elliptic}) therefore gives
$$
    \nabla \psi_P = (DF_P)^\top (\nabla g_P \circ F_P) = (H_P^{- 1} \nabla g_P) \circ F_P
$$
almost everywhere, using symmetry of $H_P.$
Moreover, boundedness of $DF_P$ and $\det DT_P$ gives, by change of variables,
$$
    \int_\Lambda \left(| \psi_P|^2 + \norm{\nabla \psi_P}^2 \right) d y \leq C_P \int_\Omega \left(|g_P|^2 + \norm{\nabla g_P}^2 \right) d z < \infty.
$$
Thus $\psi_P \in H^1 (\Lambda),$ and composing the weak-gradient identity with $T_P$ proves (\ref{eq:chain}).
Finally $P \psi_P = \mu g_P = 0.$
\end{proof}
\begin{proof}[Proof of Theorem \ref{thm:expansion}]
\setlength{\parindent}{24pt}
Let $R$ be a probability law on $\overline \Lambda.$
Since $\varphi_R - \varphi_P \in \cV,$ the adjoint equation and Proposition \ref{prop:adjoint} give
\par
$$
    D_P (\varphi_R - \varphi_P)
    = B_P (g_P, \varphi_R - \varphi_P)
    = \int \nabla \psi_P (T_P)^\top (T_R - T_P) d \mu.
$$
To identify the remainder in (\ref{eq:expansion}), define
\begin{align*}
    E_J (R, P)& \coloneqq
    \cJ (\varphi_R) - \cJ (\varphi_P) - D_P (\varphi_R - \varphi_P), \\
    E_\psi (R, P)& \coloneqq
    \int \left(\psi_P (T_R) - \psi_P (T_P)
    - \nabla \psi_P (T_P)^\top (T_R - T_P) \right) d \mu.
\end{align*}
The push-forward identities imply
$$
    (R - P) \psi_P
    = D_P (\varphi_R - \varphi_P) + E_\psi (R, P).
$$
Therefore $\cR (R, P) = E_J (R, P) - E_\psi (R, P).$
The bounds (\ref{eq:Jbound}) and (\ref{eq:psibound}) give the result by the triangle inequality.
\end{proof}
\begin{proof}[Proof of Lemma \ref{lem:stability}]
\setlength{\parindent}{24pt}
We prove (\ref{eq:stability}) with $C_P = M_P / m_P,$ using the duality argument of \cite[Theorem 6]{manole2024plugin}.
For $y \in \overline \Lambda,$ the maximizer defining $\varphi_P^{\ast} (y)$ is $F_P (y).$
On the interior, differentiation gives $\nabla \varphi_P^{\ast} = F_P$ and
$D^2 \varphi_P^{\ast} = (H_P \circ F_P)^{- 1},$ with eigenvalues in $[M_P^{- 1}, m_P^{- 1}].$
These derivatives extend continuously from the interior to $\overline \Lambda.$

Define the dual gap by
$$
    G_P (z, y)
    \coloneqq
    \varphi_P (z)
    +
    \varphi_P^{\ast} (y)
    -
    z^\top y
    .
$$
Fenchel equality gives $G_P (z, T_P (z)) = 0$ and $\nabla_y G_P (z, T_P (z)) = 0.$
Taylor's formula therefore yields, for some $\xi$ on the segment joining $T_P (z)$ and $y,$
$$
    G_P (z, y) = \frac12 (y - T_P (z))^\top D^2 \varphi_P^\ast (\xi) (y - T_P (z)).
$$
The Hessian bounds imply
\par
$$
    \frac{\norm{y - T_P (z)}^2}{2 M_P}
    \leq
    G_P (z, y)
    \leq
    \frac{\norm{y - T_P (z)}^2}{2 m_P}
    .
$$
Minimizing its integral over $\Pi (\mu, R)$ is equivalent to minimizing quadratic cost.
If $\gamma$ optimally couples $P, R,$ then $(F_P, \id)_\# \gamma$ is a competitor and
$$
    \frac{\norm{T_R - T_P}_{L^2 (\mu)}^2}{2 M_P}
    \leq
    \int
        G_P (z, T_R (z))
    d \mu
    \leq
    \int
        G_P (F_P (y), y^\prime)
    d \gamma
    \leq
    \frac{W_2^2 (P, R)}{2 m_P}
    .
$$
This proves (\ref{eq:stability}) with $C_P \coloneqq M_P / m_P.$
\end{proof}
\begin{proof}[Proof of Proposition \ref{prop:riesz}]
\setlength{\parindent}{24pt}
Let $h_P \in \cV \cap C^{2, \alpha} (\overline \Omega)$ be the Riesz representer of the first variation in (\ref{eq:Jbound}).
We construct a smooth solution of the boundary-value problem (\ref{eq:rieszpde}) and then verify the adjoint identity (\ref{eq:adjoint}) by integration by parts.
Define
$$
    A_P
    \coloneqq
    \rho H_P^{- 1}
    ,
    \qquad
    f_P
    \coloneqq
    \div (\rho \nabla h_P)
    ,
    \qquad
    k_P
    \coloneqq
    \rho {(\nabla h_P)^\top \nu}
    .
$$
Lemma \ref{lem:mapregularity} gives $A_P \in C^{1, \alpha} (\overline \Omega)$ and $\xi^\top A_P \xi \geq (\inf \rho) M_P^{- 1} \norm{\xi}^2.$
Since the boundary normal belongs to $C^{1, 1} (\partial \Omega; \bR^d),$ we have $f_P \in C^{0, \alpha} (\overline \Omega)$ and $k_P \in C^{1, \alpha} (\partial \Omega).$
The divergence theorem gives $\int_\Omega f_P d z = \int_{\partial \Omega} k_P d S.$

The prescribed boundary function $k_P$ must be the restriction of a function in $C^{1, \alpha} (\overline \Omega).$
Since $\Omega$ has $C^{1, \alpha}$ boundary, \cite[Lemma 6.38]{gilbarg2001elliptic} gives an extension of $k_P$ in $C^{1, \alpha} (\overline \Omega).$
Now, applying \cite[Theorem 2.1]{kono2025well} to (\ref{eq:rieszpde}) gives a unique solution $\tilde g_P \in C^{2, \alpha} (\overline \Omega)$ of (\ref{eq:rieszpde}) such that $\mu \tilde g_P = 0.$

Integration by parts gives, for smooth $v,$
\begin{align*}
    \int_\Omega
        \nabla \tilde g_P^\top H_P^{- 1} \nabla v
    d \mu
    &=
    -
    \int_\Omega
        f_P v
    d z
    +
    \int_{\partial \Omega}
        k_P v
    d S
    \\
    &=
    \int_\Omega
        \nabla h_P^\top \nabla v
    d \mu
    =
    D_P (v)
    ,
    \quad
    v \in \cV
    .
\end{align*}
The identities extend to $H^1$ test functions by smooth approximation.
Uniqueness in Proposition \ref{prop:adjoint} identifies this solution with its adjoint, i.e., $\tilde g_P = g_P.$
This shows (1) of Assumption \ref{ass:functional}.

Composition with $F_P \in C^{2, \alpha} (\overline \Lambda; \bR^d)$ yields $\psi_P = g_P \circ F_P \in C^{2, \alpha} (\overline \Lambda),$ since
$$
    D^2 \psi_P
    =
    (DF_P)^\top (D^2 g_P \circ F_P) DF_P
    +
    \sum_{j = 1}^d
        (\partial_j g_P \circ F_P) D^2 F_{P, j}
    ,
$$
where every term is bounded.
Taylor's formula gives
$$
    \left|
        \int
            \left(
                \psi_P (T_R)
                -
                \psi_P (T_P)
                -
                \nabla \psi_P (T_P)^\top (T_R - T_P)
            \right)
        d \mu
    \right|
    \leq
    \tfrac 12 \norm{D^2 \psi_P}_\infty \norm{T_R - T_P}_{L^2 (\mu)}^2
    .
$$
Since both $T_R, T_P$ take values in $\overline \Lambda,$ $\norm{T_R - T_P}_{L^2 (\mu)} \leq \operatorname{diam} (\Lambda).$
Consequently
$$
    \norm{T_R - T_P}_{L^2 (\mu)}^2
    \leq
    \operatorname{diam} (\Lambda)^{2 - r} \norm{T_R - T_P}_{L^2 (\mu)}^r
    .
$$
This proves (\ref{eq:psibound}) for every $R.$
Together with the assumed (\ref{eq:Jbound}), it proves Assumption \ref{ass:functional}.
\end{proof}
\begin{proof}[Proof of Proposition \ref{prop:rieszfields}]
\setlength{\parindent}{24pt}
Suppose (\ref{eq:fields}) holds with the stated smooth fields.
Set $b_P^\circ \coloneqq b_P - \mu b_P,$ which preserves $D_P$ on $\cV.$
To prove $h_P \in C^{2, \alpha} (\overline \Omega),$ we construct a smooth function with the same Riesz identity.
Consider the boundary-value problem
\begin{equation} \label{eq:rieszfields}
    \begin{cases}
        \div (\rho \nabla {\widetilde h_P})
        =
        \div (\rho a_P)
        -
        \rho b_P^\circ & \text{in } \Omega
        \\
        \rho {(\nabla \widetilde h_P)^\top \nu}
        =
        \rho {a_P^\top \nu} & \text{on } \partial \Omega
        \\
        \mu {\widetilde h_P}
        =
        0
    \end{cases}
    .
\end{equation}
Its coefficient $\rho I_d$ is uniformly elliptic and belongs to $C^{1, \alpha} (\overline \Omega).$
The right-hand side $\div (\rho a_P) - \rho b_P^\circ $ of the interior equation belongs to $C^{0, \alpha} (\overline \Omega),$ the prescribed boundary function $\rho {a_P^\top \nu}$ belongs to $C^{1, \alpha} (\partial \Omega),$ and
$$
    \int_\Omega
        (\div (\rho a_P) - \rho b_P^\circ )
    d z
    =
    \int_{\partial \Omega}
        \rho {a_P^\top \nu}
    d S
    .
$$
The boundary datum extends to $C^{1, \alpha} (\overline \Omega)$ by \cite[Lemma 6.38]{gilbarg2001elliptic}.
Thus \cite[Theorem 2.1]{kono2025well} gives a solution $\widetilde h_P \in C^{2, \alpha} (\overline \Omega)$ with $\mu \widetilde h_P = 0.$

For smooth $v \in \cV,$ integration by parts gives
\par
$$
    \int
        \nabla \mathord{\widetilde h_P}^\top \nabla v
    d \mu
    =
    \int
        (b_P^\circ v + a_P^\top \nabla v)
    d \mu
    =
    D_P (v)
    ,
    \quad
    v \in \cV
    .
$$
Smooth approximation extends this identity to every $v \in \cV.$
Uniqueness in the Riesz representation theorem gives $\widetilde h_P = h_P,$ so $h_P \in C^{2, \alpha} (\overline \Omega).$

Conversely, a representer in $C^{2, \alpha} (\overline \Omega)$ gives (\ref{eq:fields}) with $a_P = \nabla h_P, b_P = 0.$
This proves the equivalence.
\end{proof}
\begin{proof}[Proof of Example \ref{ex:linear}]
\setlength{\parindent}{24pt}
Cauchy--Schwarz gives $| D_P (v) | \leq \norm{w}_{L^2 (\mu)} \norm{v}_\cV.$
Linearity gives
$$
    \cJ_w (\varphi_R)
    -
    \cJ_w (\varphi_P)
    =
    D_P (\varphi_R - \varphi_P)
    ,
$$
so (\ref{eq:Jbound}) holds with zero remainder.
Proposition \ref{prop:rieszfields}, with $a_P = w$ and $b_P = 0,$ gives a Riesz representer in $C^{2, \alpha} (\overline \Omega)$.
Proposition \ref{prop:riesz} therefore gives Assumption \ref{ass:functional} with $r = 2.$
\end{proof}
\begin{proof}[Proof of Example \ref{ex:local}]
\setlength{\parindent}{24pt}
We first verify $D_P \in \cV^{\ast}$ for the stated coefficients $a_P$ and $b_P.$
Poincar\'e's inequality gives
$$
    | D_P (v) |
    \leq
    C
    \left(
        \norm{b_P}_{L^2 (\mu)}
        +
        \norm{a_P}_{L^2 (\mu)}
    \right) \norm{v}_\cV
    .
$$
Since the two coefficients $a_P, b_P$ are bounded in $L^2 (\mu),$ $D_P \in \cV^{\ast}.$

We next prove (\ref{eq:Jbound}) with $r = 2.$
Let
$$
    L_\ell
    \coloneqq
    \sup_{
        \substack{z \in \overline \Omega, \  | s | \leq M \\ y \in \overline \Lambda}
    } \norm{D^2_{(s, y)} \ell (z, s, y)}
    ,
$$
where the Hessian is taken with respect to the combined argument $(s, y).$
This constant is finite.
For every target law $R,$ Lemma \ref{lem:potentials}
puts $\varphi_P (z)$ and $\varphi_R (z)$ in $[ - M, M],$ while
$T_P (z), T_R (z) \in \overline \Lambda$ almost everywhere.
Convexity of the interval
and of $\overline \Lambda$ keeps the segment between these argument pairs in the
same set.
Taylor's formula in $(s, y),$ integrated with respect to $\mu,$ gives
\begin{align*}
    \left| 
        \cJ_\ell (\varphi_R)
        -
        \cJ_\ell (\varphi_P)
        -
        D_P (\varphi_R - \varphi_P)
    \right|
    & \leq
    \frac{L_\ell}{2} \left(
        \norm{\varphi_R - \varphi_P}_{L^2 (\mu)}^2
        +
        \norm{T_R - T_P}_{L^2 (\mu)}^2
    \right)
    \\
    & \leq
    \frac{L_\ell}{2} (1 + C) \norm{T_R - T_P}_{L^2 (\mu)}^2
    ,
\end{align*}
where the second inequality follows by Poincar\'e's inequality.
This proves (\ref{eq:Jbound}) with a constant independent of $R.$

To verify the smooth Riesz condition, the chain rule gives
$$
    D a_P (z) = D_z \nabla_y \ell + \partial_s \nabla_y \ell T_P (z)^\top + D_y^2 \ell H_P (z),
$$
where all derivatives of $\ell$ are evaluated at $(z, \varphi_P (z), T_P (z)).$
Each term is $\alpha$-H\"older on $\overline \Omega$ by Assumption \ref{ass:base} and the regularity of $\ell.$
Thus $a_P \in C^{1, \alpha} (\overline \Omega; \bR^d).$
The same composition argument gives $b_P \in C^{0, \alpha} (\overline \Omega).$
Proposition \ref{prop:rieszfields} gives $h_P \in C^{2, \alpha} (\overline \Omega).$
Together with the functional bound, Proposition \ref{prop:riesz} yields Assumption \ref{ass:functional}.
\end{proof}
\begin{proof}[Proof of Example \ref{ex:cost}]
\setlength{\parindent}{24pt}
We first verify that $g_{\mathrm W, P}$ is the adjoint and $\psi_{\mathrm W, P}$ is the influence function.
Example \ref{ex:local}, with $\ell (z, s, y) \coloneqq \norm{z - y}^2,$ gives the first variation with $b_P = 0$ and $a_P (z) = 2 (T_P (z) - z).$
Since $\nabla \varphi_P^{\ast} = F_P,$ the influence function in (\ref{eq:costscore}) satisfies $\nabla \psi_{\mathrm W, P} (T_P (z)) = 2 (T_P (z) - z).$
The chain rule therefore gives $B_P (g_{\mathrm W, P}, v) = D_P (v)$ for every $v \in \cV,$ and $\mu g_{\mathrm W, P} = P \psi_{\mathrm W, P} = 0.$
This identifies the adjoint.

To obtain the cost expansion and its remainder bound, use the dual gap $G_P$ from the proof of Lemma \ref{lem:stability}.
For $z \in \overline \Omega$ and $y \in \overline \Lambda,$
$$
    \norm{z - y}^2
    =
    \left(\norm{z}^2 - 2 \varphi_P (z)\right)
    +
    \left(\norm{y}^2 - 2 \varphi_P^{\ast} (y)\right)
    +
    2 G_P (z, y)
    .
$$
Integrating at $T_R$ and $T_P$ gives
\begin{align}
    W_2^2 (\mu, R)
    -
    W_2^2 (\mu, P)
    & =
    (R - P) \psi_{\mathrm W, P}
    +
    \cR_{\mathrm W} (R, P)
    ,
    \label{eq:costexpansion}
    \\
    \cR_{\mathrm W} (R, P)
    & =
    2
    \int_\Omega
        G_P (z, T_R (z))
    d \mu (z)
    ,
    \notag
    \\
    \frac 1{M_P} \norm{T_R - T_P}_{L^2 (\mu)}^2
    & \leq
    \cR_{\mathrm W} (R, P)
    \leq
    \frac 1{m_P} \norm{T_R - T_P}_{L^2 (\mu)}^2
    .
    \label{eq:costgapbound}
\end{align}
The inequalities follow from the Taylor bounds in the proof of Lemma \ref{lem:stability}.
\end{proof}

\subsection{Subgroup evaluations} \label{app:subgroup}
Suppose Assumption \ref{ass:base} holds.
Consider a nonempty domain $A$ such that $\overline A \subset \Omega$ and $\partial A$ is $C^{2, 1}.$
For $c_P \in C^{1, \alpha} (\overline \Omega; \bR^d),$ Cauchy--Schwarz gives
$$
    \left|\int_A c_P^\top \nabla v d \mu\right|
    \leq \norm{c_P \indic_A}_{L^2 (\mu)} \norm{v}_{\cV},
    \qquad v \in \cV.
$$
Proposition \ref{prop:adjoint} therefore gives a unique $g_P \in \cV$ satisfying
\begin{equation}\label{eq:subgroupadjoint}
    B_P (g_P, v) = \int_A c_P^\top \nabla v d \mu,
    \qquad v \in \cV.
\end{equation}
Put $A_P \coloneqq \rho H_P^{-1}$ and $f_P \coloneqq \rho c_P.$
By Lemma \ref{lem:mapregularity}, $A_P$ is uniformly elliptic and belongs to $C^{1, \alpha} (\overline \Omega; \bR^{d \times d}).$
For any $v \in H^1 (\Omega),$ the function $v - \mu v$ lies in $\cV$ and has the same gradient as $v.$
Using it in (\ref{eq:subgroupadjoint}) yields
\begin{equation}\label{eq:subgroupweak}
    \int_\Omega (A_P \nabla g_P)^\top \nabla v d z
    = \int_\Omega (f_P \indic_A)^\top \nabla v d z,
    \qquad v \in H^1 (\Omega).
\end{equation}
For $r > 0,$ let $B_r (a)$ denote the open ball of radius $r$ centered at $a,$ and set $B_r^+ \coloneqq B_r \cap \{x \mid x_d > 0\}$ and $B_r^- \coloneqq B_r \cap \{x \mid x_d < 0\}.$

We first control the adjoint near $\partial A,$ across which the field $f_P \indic_A$ in (\ref{eq:subgroupweak}) changes from $f_P$ to zero.
\begin{claim}[Interface regularity]\label{clm:subgroup-interface}
For every $a \in \partial A,$ there is an open neighborhood $U_a\subset\Omega$ such that the restrictions of $g_P$ to $A \cap U_a$ and $(\Omega \setminus \overline A) \cap U_a$ have $C^{1, 1 / 2}$ representatives on their respective closures.
\end{claim}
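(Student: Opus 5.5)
We prove the claim with a standard transmission-problem argument: straighten $\partial A$ near $a$, subtract an explicit function that carries the jump in the flux, and then apply regularity theory for divergence-form equations whose coefficients are piecewise smooth across a flat interface.

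\textbf{Step 1 (straightening).} Since $\partial A$ is $C^{2,1}$ and $\overline A \subset \Omega$, there is a $C^{2,1}$ diffeomorphism $\Phi$ defined on a ball $B_{r}(a) \subset \Omega$, with $C^{2,1}$ inverse and $\Phi(a) = 0$. It maps $A \cap B_r(a)$ into $\{x_d > 0\}$ and $\partial A \cap B_r(a)$ into $\{x_d = 0\}$. Shrinking $r$, we may assume $\Phi(B_r(a)) \supset B_{2s}$ for some $s>0$.

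Set $\tilde g = g_P \circ \Phi^{-1}$. Then (\ref{eq:subgroupweak}), with test functions $v \in H^1_0(\Phi^{-1}(B_{2s}))$, becomes
$$\int_{B_{2s}} (\tilde A \nabla \tilde g)^\top \nabla \tilde v \, dx = \int_{B_{2s}} (\tilde f \indic_{B_{2s}^+})^\top \nabla \tilde v \, dx .$$
Here $\tilde A = |\det D\Phi^{-1}|\, (D\Phi\, A_P\, D\Phi^\top)\circ\Phi^{-1}$ is uniformly elliptic and of class $C^{1,\alpha}$, at least Lipschitz. The field $\tilde f$ is the analogous transform of $f_P$ and is $C^{1,\alpha}$ up to $\{x_d = 0\}$ from above. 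Regularity of $\Phi$ to order $C^{2,1}$ is exactly what keeps $\tilde A$ and $\tilde f$ $C^{0,1}$ or better.

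\textbf{Step 2 (removing the jump).} Extend $\tilde f|_{B_{2s}^+}$ to a field $\bar f \in C^{1,\alpha}(\overline{B_{2s}})$, for instance by reflection or by Gilbarg--Trudinger Lemma 6.38. Then $\tilde f \indic_{+} = \bar f - \bar f \indic_{-}$. The term $\bar f$ is a smooth divergence and is absorbed into the right-hand side. It remains to handle the single jump field $\bar f\indic_{-}$.

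Alternatively, and this is the route I would try first, we use the standard result for divergence-form equations with coefficients and data that are piecewise $C^{0,\gamma}$ across finitely many $C^{1,\gamma}$ interfaces. This is the Li--Vogelius / Li--Nirenberg estimate. It gives $\tilde g \in C^{1,\gamma'}$ on $\overline{B_s^{\pm}}$ separately, for any $\gamma' \le \min(\gamma, \tfrac{\gamma}{(\gamma+1)d})$ in the general statement. With a single flat interface and H\"older data, the sharper form gives $C^{1,\gamma}$ on each side. We apply it with the coefficient $\tilde A$, which is even continuous across the interface, and the data $\tilde f\indic_+$, which is piecewise $C^{1,\alpha}$. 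Choosing the exponent $\gamma = 1/2 \le$ the available Lipschitz or H\"older regularity yields piecewise $C^{1,1/2}$. The exponent $1/2$ is a safe choice below what the data permit, and it is what the paper uses later.

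\textbf{Step 3 (direct verification, if one prefers not to cite the interface theorem).} Freeze the coefficients at $0$. Solve the constant-coefficient problem whose data is $\bar f(0)\indic_+$ explicitly: the solution is a function of $x_d$ alone, piecewise linear with a kink at $x_d=0$. Subtract it. The remainder satisfies an equation whose right-hand side has a jump that is $O(|x|)$. A Campanato iteration, comparing on balls with solutions of the frozen problem and using tangential difference quotients plus the equation to recover $\partial_d$ of the conormal derivative $\tilde A\nabla\tilde g - \tilde f\indic_+$, then gives: (i) tangential derivatives are H\"older; (ii) the conormal flux $e_d^\top(\tilde A\nabla \tilde g - \tilde f\indic_+)$ is $C^{0,1/2}$ across the interface. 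Combining (i) and (ii) yields piecewise $C^{1,1/2}$ gradients.

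\textbf{Step 4 (pulling back).} Pull back by $\Phi$, a $C^{2,1}$ map, which preserves piecewise $C^{1,1/2}$ regularity. Take $U_a = \Phi^{-1}(B_s)$; the restrictions of $g_P$ to $A\cap U_a$ and $(\Omega\setminus\overline A)\cap U_a$ then have $C^{1,1/2}$ representatives on their closures.

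\textbf{Main obstacle.} Step 2/3 is the hard part: obtaining gradient H\"older bounds up to the interface from each side even though the global gradient jumps. Standard interior Schauder theory fails there, so everything rests on either the cited interface theorem or the tangential-difference-quotient and Campanato argument. That argument needs care in recovering the normal derivative from the flux condition.
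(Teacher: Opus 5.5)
Your proposal is correct and follows essentially the paper's route: the paper also reduces the claim to a piecewise Schauder estimate for the transmission problem, applying \cite[Corollary 3.9]{zhuge2021regularity} directly on a ball around $a$ with the curved $C^{1,1/2}$ interface $\partial A$, coefficient $A_P$, and field $f_P\indic_A$, so neither your flattening step nor the Campanato fallback is needed there. One caution on your citation: the exponent $1/2$ comes only from the sharp single-interface (or flat/laminar) form of such estimates, not from the general Li--Vogelius/Li--Nirenberg exponent, which for a $C^{1,1}$ interface is at most $1/4$; so your remark that $1/2$ is a ``safe choice below what the data permit'' is wrong for the general statement, and it is your flattening to an exactly flat interface that makes the sharp flat-interface version applicable.
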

\begin{proof}[Proof of Claim \ref{clm:subgroup-interface}]
\setlength{\parindent}{24pt}
Let $a \in \partial A.$
Choose $r > 0$ such that $\overline{B_{2 r} (a)} \subset \Omega$ and $\partial A$ is a single $C^{2, 1}$ graph in this ball.
For tests supported there, (\ref{eq:subgroupweak}) implies that $g = g_P$ is a weak solution of 
$$
    \div (A_P \nabla g) = \div (f_P \indic_A)
    \quad \text{in } B_{2 r} (a)
    .
$$
The coefficient $A_P$ is uniformly elliptic, and the fields $A_P,$ $f_P,$ and zero are Lipschitz, hence $C^{0, 1 / 2},$ on the closures of the respective sides.
The interface is $C^{1, 1 / 2}.$
After translation and rescaling, \cite[Corollary 3.9]{zhuge2021regularity} applies directly with $k = 1,$ H\"older exponent $1 / 2,$ one scalar unknown, coefficient field $A_P,$ and right-hand vector field $f_P \indic_A.$
It gives the claimed regularity with $U_a = B_r (a).$
\end{proof}
We next establish regularity at the outer boundary $\partial \Omega.$
Since $\operatorname{dist} (\overline A, \partial \Omega)>0,$ there is an open neighborhood $U$ of $\partial \Omega$ disjoint from $\overline A.$
Consequently, the vector field $f_P \indic_A$ on the right-hand side of (\ref{eq:subgroupweak}) is identically zero on $U \cap \Omega.$
\begin{claim}[Boundary regularity]\label{clm:subgroup-boundary}
For every $a \in \partial \Omega,$ there is an open neighborhood $U_a$ disjoint from $\overline A$ such that
$$
    g_P|_{\Omega \cap U_a} \in C^{1, 1 / 2} (\overline{\Omega \cap U_a}).
$$
\end{claim}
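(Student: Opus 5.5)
Near $\partial\Omega$ the adjoint $g_P$ is a weak solution of a homogeneous conormal (Neumann) problem with smooth coefficients. The plan is to obtain the claimed $C^{1,1/2}$ regularity from local boundary Schauder theory for Neumann problems, after flattening the boundary.

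First, fix the neighborhood. Let $a \in \partial\Omega$ and put $\delta \coloneqq \operatorname{dist}(\overline A,\partial\Omega)>0$. Choose a radius $r<\delta/4$ small enough that $\partial\Omega \cap B_{2r}(a)$ is a single $C^{2,1}$ graph in the sense of the boundary definition in the notation section. Then $B_{2r}(a)$ is disjoint from $\overline A$. Taking test functions $v\in H^1(\Omega)$ supported in $B_{2r}(a)$ in (\ref{eq:subgroupweak}), the right-hand side vanishes, so
$$
\int_{\Omega\cap B_{2r}(a)} (A_P\nabla g_P)^\top\nabla v\,dz=0 .
$$
Because $v$ may have arbitrary boundary values on $\partial\Omega\cap B_{2r}(a)$, this is exactly the weak form of $\div(A_P\nabla g_P)=0$ in $\Omega\cap B_{2r}(a)$ with conormal condition $(A_P\nabla g_P)^\top\nu=0$ on $\partial\Omega\cap B_{2r}(a)$.

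Second, flatten the boundary. Apply the rigid change $\Phi$ followed by $x\mapsto(x_{-d},x_d-h(x_{-d}))$; since $h\in C^{2,1}$, this map is $C^{2,1}$. The pulled-back problem has coefficient $\tilde A=J A_P J^\top|\det J|^{-1}$, where $J$ is the Jacobian of the flattening. This coefficient is uniformly elliptic and belongs to $C^{1,\alpha}$, in particular to $C^{0,1/2}$. The problem is now a homogeneous conormal problem on a half ball $B_\rho^+$ with flat boundary portion $\{x_d=0\}$, and $g_P$ transforms into an $H^1(B_\rho^+)$ weak solution.

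Third, bootstrap to Hölder-gradient regularity. I would invoke the local boundary $C^{1,\beta}$ estimate for divergence-form conormal problems with Hölder coefficients, for instance \cite[Corollary 3.9]{zhuge2021regularity} in its boundary form, or the classical Campanato and Schauder theory for the Neumann problem (as in \cite{gilbarg2001elliptic}), with exponent $\beta=1/2$. An alternative is to reflect evenly across $x_d=0$. A conormal solution extends to a weak solution of a divergence-form equation on the full ball, whose coefficients are only piecewise $C^{0,1/2}$ across the flat interface. The same transmission result used in Claim \ref{clm:subgroup-interface} then gives $C^{1,1/2}$ regularity on each closed half, which is exactly what is needed. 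Finally, pull back by the $C^{2,1}$ flattening map, which preserves $C^{1,1/2}$, and take $U_a$ to be a smaller ball $B_{r'}(a)$ whose closure lies in the flattened chart and avoids $\overline A$.

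The main obstacle is the third step: matching the hypotheses of the cited regularity theorem exactly, since the paper only has $H^1$ weak solutions and needs an $L^2$-to-$C^{1,1/2}$ boundary estimate. The reflection route would be my first attempt, because it reduces everything to the already-cited transmission result with the flat interface as the only interface. The point to check is that even reflection turns the conormal condition into the correct transmission condition. It does: for the reflected coefficient $\hat A=RAR$ with $R=\operatorname{diag}(1,\dots,1,-1)$, the zero conormal flux makes the fluxes across $x_d=0$ match.
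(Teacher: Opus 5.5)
Your proposal is correct and follows essentially the same route as the paper. You localize near $a$ away from $\overline A$ so that the right-hand side of (\ref{eq:subgroupweak}) vanishes, flatten $\partial\Omega$ by a $C^{2,1}$ chart while keeping test functions that need not vanish on the flat boundary, reflect evenly across $x_d = 0$, apply \cite[Corollary 3.9]{zhuge2021regularity} to the resulting flat-interface problem, and pull back. The point you flagged is settled in the paper in weak form. It tests the reflected identity with a test function plus its reflection, and it checks that the evenly reflected function lies in $H^1$ of the full ball by reflecting smooth approximants.
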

\begin{proof}[Proof of Claim \ref{clm:subgroup-boundary}]
\setlength{\parindent}{24pt}
Fix $a \in \partial \Omega.$
We prove regularity near $a$ by straightening the boundary and reflecting the transformed equation across the resulting flat boundary.
The reflected equation is an interior interface problem to which \cite[Corollary 3.9]{zhuge2021regularity} applies.

\noindent
\textbf{Step 1.}
We first construct a map $\Phi_a$ that sends a flat half-ball into $\Omega$ and its flat boundary onto $\partial \Omega$ near $a.$
Thus $\Phi_a^{-1}$ straightens the boundary, so that reflection can be performed across $x_d = 0.$
Choose an orthogonal matrix $Q_a$ whose first $d - 1$ columns span the tangent plane to $\partial \Omega$ at $a$ and whose last column is the inward normal $-\nu (a).$
In the rotated variables $\xi = Q_a^\top (z - a),$ the boundary is locally the graph $\xi_d = h_a (\xi_{-d}),$ where $h_a \in C^{2, 1},$ $h_a (0) = 0,$ and $\nabla h_a (0) = 0;$ the domain lies on the side $\xi_d > h_a (\xi_{-d}).$
Choose $r > 0$ sufficiently small that the map
$$
    \Phi_a (x) \coloneqq a + Q_a \begin{pmatrix}x_{-d} \\ x_d + h_a (x_{-d})\end{pmatrix}, \qquad x \in B_{2 r},
$$
is defined on a neighborhood of $\overline{B_{2 r}},$ its image lies in this graph neighborhood, and its image is disjoint from $\overline A.$
The map $x_{-d} \mapsto \Phi_a (x_{-d}, 0)$ parametrizes the boundary surface, while $x_d > 0$ corresponds to points inside $\Omega.$
Its inverse sends $z$ to $(\xi_{-d}, \xi_d - h_a (\xi_{-d})),$ with $\xi = Q_a^\top (z - a).$
Hence $\Phi_a$ is a $C^{2, 1}$ diffeomorphism onto its image and
$$
    \Phi_a (B_{2 r}^+) = \Omega \cap \Phi_a (B_{2 r}), \qquad
    \Phi_a (B_{2 r} \cap \{x \mid x_d = 0\}) = \partial \Omega \cap \Phi_a (B_{2 r}).
$$

\noindent
\textbf{Step 2.}
We next express (\ref{eq:subgroupweak}) in these variables, retaining test functions that need not vanish on the flat boundary.
Set $u \coloneqq g_P \circ \Phi_a.$
The weak chain rule gives $u \in H^1 (B_{2 r}^+)$; see \cite[Section 7.3, formula (7.19)]{gilbarg2001elliptic}.
Writing
$$
    J \coloneqq D \Phi_a = Q_a \begin{pmatrix}I_{d - 1} & 0 \\ (\nabla h_a)^\top & 1\end{pmatrix}, \qquad |\det J| = 1,
$$
and using $\nabla g_P \circ \Phi_a = (J^{-1})^\top \nabla u,$ the transformed coefficient is
$$
    \widetilde A \coloneqq J^{-1} (A_P \circ \Phi_a) (J^{-1})^\top.
$$
Both $J$ and $J^{-1}$ are bounded and Lipschitz, so $\widetilde A$ is Lipschitz.
If $\lambda > 0$ is an ellipticity constant for $A_P,$ then
$$
    v^\top \widetilde A (x) v \geq \lambda \norm{(J (x)^{-1})^\top v}^2
    \geq \lambda \left(\sup_{B_{2 r}} \norm{J}\right)^{-2} \norm{v}^2.
$$
Thus $\widetilde A$ is uniformly elliptic.
For $\eta \in C_c^\infty (B_{2 r}),$ use $\eta \circ \Phi_a^{-1}$ on $\Omega \cap \Phi_a (B_{2 r})$ and extend it by zero to the rest of $\Omega.$
This is an $H^1 (\Omega)$ test function because $\eta$ vanishes near the spherical boundary of the half-ball; it need not vanish on $\partial \Omega.$
Its support is disjoint from $\overline A,$ so (\ref{eq:subgroupweak}) and a change of variables give
\begin{equation} \label{eq:subgroup-boundary-local}
    \int_{B_{2 r}^+} (\widetilde A \nabla u)^\top \nabla \eta  d x = 0,
    \qquad \eta \in C_c^\infty (B_{2 r}).
\end{equation}

\noindent
\textbf{Step 3.}
We now extend $u$ and its equation to the full ball, making $x_d = 0$ an interior interface.
Let $S \coloneqq \operatorname{diag} (1, \ldots, 1, -1)$ and define
$$
    u^{\mathrm e} (x) \coloneqq u (x_{-d}, |x_d|), \qquad
    \widetilde A^{\mathrm e} (x) \coloneqq
    \begin{cases}
        \widetilde A (x), & x_d > 0, \\
        S \widetilde A (S x) S, & x_d < 0.
    \end{cases}
$$
Approximate $u$ in $H^1 (B_{2 r}^+)$ by functions smooth up to the boundary and reflect them evenly.
The squared $H^1$ norm of each reflected difference is twice its squared half-ball norm.
Their limit is therefore $u^{\mathrm e} \in H^1 (B_{2 r}),$ with gradient $\nabla u$ above the interface and $S \nabla u (S x)$ below it.
For $\zeta \in C_c^\infty (B_{2 r}),$ taking $\eta = \zeta + \zeta \circ S$ in (\ref{eq:subgroup-boundary-local}) yields
$$
    \int_{B_{2 r}} (\widetilde A^{\mathrm e} \nabla u^{\mathrm e})^\top \nabla \zeta  d x
    = \int_{B_{2 r}^+} (\widetilde A \nabla u)^\top \nabla (\zeta + \zeta \circ S)  d x = 0.
$$
The reflected coefficient is uniformly elliptic, since $S$ is orthogonal, and is Lipschitz on each closed half-ball.
After rescaling, \cite[Corollary 3.9]{zhuge2021regularity} applies with $k = 1,$ H\"older exponent $1 / 2,$ a flat interface, and zero right-hand vector field.
It gives $u \in C^{1, 1 / 2} (\overline{B_r^+}).$
Since $g_P = u \circ \Phi_a^{-1}$ and $\Phi_a^{-1}$ is $C^{2, 1},$ the claim follows with $U_a = \Phi_a (B_r).$
\end{proof}
The two boundary estimates, together with interior regularity, yield the following global bounds for the subgroup adjoint.
\begin{lemma}[Regularity of the subgroup adjoint]\label{lem:subgroupregularity}
Under Assumption \ref{ass:base}, let $A$ be a nonempty domain such that $\overline A \subset \Omega$ and $\partial A$ is $C^{2, 1}.$
For every $c_P \in C^{1, \alpha} (\overline \Omega; \bR^d),$ the solution $g_P$ of (\ref{eq:subgroupadjoint}) has a globally Lipschitz representative on $\overline \Omega$ and satisfies
$$
    g_P|_A \in C^{1, 1 / 2} (\overline A),
    \qquad
    g_P|_{\Omega \setminus \overline A}
       \in C^{1, 1 / 2} (\overline{\Omega \setminus \overline A}).
$$
\end{lemma}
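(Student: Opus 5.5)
The plan is to patch together local regularity statements by a compactness argument. Claims \ref{clm:subgroup-interface} and \ref{clm:subgroup-boundary} already cover neighborhoods of $\partial A$ and $\partial \Omega.$ Interior regularity will handle the rest. A finite subcover then gives the global piecewise $C^{1,1/2}$ bounds, and continuity across $\partial A$ together with boundedness of the one-sided gradients gives the global Lipschitz property.

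\textbf{Step 1 (interior points).} Let $z \in \Omega \setminus \partial A.$ Choose a ball $B \Subset \Omega$ around $z$ that lies either inside $A$ or inside $\Omega \setminus \overline A.$ By (\ref{eq:subgroupweak}), with tests supported in $B,$ $g_P$ solves $\div (A_P \nabla g_P) = \div F$ weakly in $B.$ Here $F = f_P$ or $F = 0,$ which is Lipschitz on $\overline B,$ and $A_P \in C^{1,\alpha}.$ Interior Schauder estimates for divergence-form equations (e.g.\ \cite[Theorem 8.32]{gilbarg2001elliptic}, or \cite[Corollary 3.9]{zhuge2021regularity} with an empty interface) then give $g_P \in C^{1,1/2}$ on a smaller concentric ball.

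\textbf{Step 2 (covering).} The family of neighborhoods consists of those from Step 1, the $U_a$ for $a \in \partial A$ from Claim \ref{clm:subgroup-interface}, and the $U_a$ for $a \in \partial\Omega$ from Claim \ref{clm:subgroup-boundary}. It covers the compact set $\overline \Omega,$ so finitely many suffice. On $\overline A,$ each point has a neighborhood on which $g_P|_A$ is $C^{1,1/2}$ up to $\overline A.$ By Claim \ref{clm:subgroup-boundary}, the neighborhoods of $\partial\Omega$ are disjoint from $\overline A$ and so do not interfere. The analogous statement holds on $\overline{\Omega \setminus \overline A}.$ Local $C^{1,1/2}$ bounds on finitely many patches give a global $C^{1,1/2}$ bound once one checks that the representatives agree on overlaps, which holds because they coincide almost everywhere and are continuous. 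For the global H\"older seminorm, pairs of points closer than a Lebesgue number $\delta$ of the cover lie in one patch. For pairs at distance at least $\delta,$ the bound $\norm{\nabla g}_\infty \le C\,\delta^{-1/2}\,|x-y|^{1/2}$ suffices. One subtlety needs care: $\overline{\Omega\setminus\overline A}$ is not convex. A pair within $\delta$ in one patch still needs the patch restricted to that side, and this is exactly what the claims provide.

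\textbf{Step 3 (global Lipschitz representative).} Continuity across $\partial A$ is the key point. By Claim \ref{clm:subgroup-interface}, near each $a \in \partial A$ the two one-sided representatives are $C^1$ up to the interface. I expect their traces to agree, because $g_P \in H^1(B_r(a))$ has a single trace on $\partial A$ from both sides. Hence the glued function is continuous, equals $g_P$ almost everywhere, and has weak gradient almost everywhere equal to the piecewise gradients. This gradient is bounded by Step 2. A continuous $H^1$ function with bounded gradient on a convex domain is Lipschitz, by the same characterization used in the proof of Lemma \ref{lem:potentials}: integrate along segments, which cross $\partial A$ in a set handled by continuity, or approximate by mollification. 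Convexity of $\Omega$ then gives the global Lipschitz bound, and it extends to $\overline\Omega$ by uniform continuity.

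The main obstacle is the bookkeeping in Step 2: passing from local patches to a global $C^{1,1/2}(\overline A)$ norm. A secondary obstacle is the trace-matching argument in Step 3. The rest is direct.
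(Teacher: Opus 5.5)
Your proposal is correct and follows essentially the paper's proof. Both arguments use interior Schauder estimates together with the two claims, take a finite cover, match the continuous gradient representatives on overlaps, and obtain the global $1/2$-H\"older seminorm by a Lebesgue-number argument, with Lipschitz continuity coming from the bounded-weak-gradient characterization on the convex domain $\Omega$. The one difference is that the trace-matching in your Step 3 is unnecessary: once the cover shows $g_P \in W^{1,\infty}(\Omega)$, that characterization directly gives a Lipschitz representative, which is therefore continuous across $\partial A$ automatically, and this is how the paper argues.
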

\begin{proof}[Proof of Lemma \ref{lem:subgroupregularity}]
\setlength{\parindent}{24pt}
For any ball $B$ with $\overline B \subset A,$ tests supported in $B$ reduce (\ref{eq:subgroupweak}) to
$$
    \int_B {(A_P \nabla g_P)^\top \nabla \zeta} d z
    = \int_B {f_P^\top \nabla \zeta} d z,
    \qquad \zeta \in C_c^\infty (B).
$$
If instead $\overline B \subset \Omega \setminus \overline A,$ the same identity holds with zero on the right-hand side.
Since $A_P$ and $f_P$ are Lipschitz on these balls, the interior Schauder estimate for functions satisfying this identity gives $g_P \in C^{1, 1 / 2}$ on smaller closed balls; see \cite[Section 8.11]{gilbarg2001elliptic}.

A finite cover of $\overline \Omega$ by these balls and the neighborhoods in Claims \ref{clm:subgroup-interface} and \ref{clm:subgroup-boundary} shows that $g_P$ and its weak gradient are essentially bounded.
The bounded weak-gradient characterization of Lipschitz functions (\cite[Section 5.8.2, Theorem 4]{evans2010partial}) therefore gives a Lipschitz representative on the bounded convex domain $\Omega.$
It extends uniquely to a Lipschitz function on $\overline \Omega.$

Consider either closed set $K = \overline A$ or $K = \overline{\Omega \setminus \overline A}.$
On overlapping neighborhoods, the continuous gradient representatives agree in the interior because both equal the weak gradient almost everywhere; continuity then gives agreement on the boundary as well.
They therefore define one continuous gradient on $K.$

Shrink the finitely many neighborhoods if necessary while retaining a cover of $K.$
Compactness gives $\delta > 0$ such that any two points of $K$ at distance less than $\delta$ belong to a common original neighborhood.
Let $C_0$ be the largest of the finitely many H\"older constants and let $M_0$ bound the gradient on $K.$
For $z, w \in K$ with $\norm{z - w} < \delta,$ the common neighborhood gives $\norm{\nabla g_P (z) - \nabla g_P (w)} \leq C_0 \norm{z - w}^{1 / 2}.$
For the remaining pairs,
$$
    \norm{\nabla g_P (z) - \nabla g_P (w)} \leq 2 M_0 \leq 2 M_0 \delta^{-1 / 2} \norm{z - w}^{1 / 2}.
$$
Thus the gradient is $1 / 2$-H\"older on each of the two closed sets, proving the lemma.
\end{proof}
We now use this regularity to bound the Taylor remainder of the influence function, including contributions from segments crossing $T_P (\partial A).$
\begin{claim}[Adjoint remainder]\label{clm:subgroup-score-remainder}
Let $g_P$ be the solution of (\ref{eq:subgroupadjoint}) and set $\psi_P = g_P \circ F_P.$
Then $\psi_P$ is globally Lipschitz on $\overline \Lambda,$ and there is a constant $C$ such that, for every probability law $R$ on $\overline \Lambda,$
$$
    \int_\Omega\bigl|\psi_P (T_R) - \psi_P (T_P) - \nabla \psi_P (T_P)^\top (T_R - T_P)\bigr| d \mu
    \leq C \norm{T_R - T_P}_{L^2 (\mu)}^{4 / 3}.
$$
\par
\end{claim}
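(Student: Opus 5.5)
First I establish the regularity of $\psi_P$ on $\overline\Lambda$. By Lemma \ref{lem:subgroupregularity}, $g_P$ is Lipschitz on $\overline\Omega$ and $C^{1,1/2}$ on $\overline A$ and on $\overline{\Omega\setminus\overline A}$ separately. Lemma \ref{lem:mapregularity} makes $F_P$ a $C^{2,\alpha}$ diffeomorphism. Hence $\psi_P=g_P\circ F_P$ is globally Lipschitz on $\overline\Lambda$. It is also $C^{1,1/2}$ on each of the closed sets $\Lambda_+\coloneqq T_P(\overline A)$ and $\Lambda_-\coloneqq T_P(\overline{\Omega\setminus\overline A})$, since a $C^{1,1/2}$ function composed with a $C^{2}$ map stays $C^{1,1/2}$. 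These two sets meet along the $C^{2,\alpha}$ hypersurface $\Sigma_P=T_P(\partial A)$, which is compactly contained in $\Lambda$. Set $\delta(z)\coloneqq\norm{T_R(z)-T_P(z)}$, $\varepsilon\coloneqq\norm{T_R-T_P}_{L^2(\mu)}$, and let $e(z)$ denote the integrand. I get two pointwise bounds. The Lipschitz bound gives $|e(z)|\le 2\operatorname{Lip}(\psi_P)\,\delta(z)$ for all $z$. Second, if the segment $[T_P(z),T_R(z)]$ lies in one of $\Lambda_\pm$, then $|e(z)|\le C\delta(z)^{3/2}$. This holds because the segment lies in $\overline\Lambda$ by convexity, and on a side the one-sided Hölder gradient gives a $3/2$-order Taylor remainder. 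The same bound holds when the segment crosses $\Sigma_P$ only near its ends, which I handle below.

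Next I split by distance to the interface. Let $d(z)\coloneqq\operatorname{dist}(T_P(z),\Sigma_P)$. If $\delta(z)<d(z)$, the segment cannot reach $\Sigma_P$. The point $T_P(z)$ lies in the open side containing it, so the whole segment stays in that side and the $3/2$ bound applies. Otherwise $d(z)\le\delta(z)$, and I use the linear bound. Fix a threshold $t>0$ and split the bad set further into $\{d(z)\le\delta(z),\ \delta(z)\le t\}$ and $\{\delta(z)>t\}$. On the first piece, $d(z)\le t$, so $z$ lies in the set $N_t\coloneqq\{z:\operatorname{dist}(T_P(z),\Sigma_P)\le t\}$. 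Since $\Sigma_P$ is a compact $C^2$ hypersurface and $T_P$ is bi-Lipschitz, $\mu(N_t)\le Ct$. On the second piece, Chebyshev gives $\int_{\{\delta>t\}}\delta\,d\mu\le \varepsilon^2/t$.

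Now I collect the terms:
$$
\int_\Omega|e|\,d\mu\le C\int\delta^{3/2}d\mu+C\int_{N_t}\delta\,d\mu+C\frac{\varepsilon^2}{t}.
$$
By Hölder, the first term is at most $C\varepsilon^{3/2}$, and the middle term is at most $C\mu(N_t)^{1/2}\varepsilon\le Ct^{1/2}\varepsilon$. Balancing $t^{1/2}\varepsilon$ against $\varepsilon^2/t$ gives $t=\varepsilon^{2/3}$, and both terms become $\varepsilon^{4/3}$. I do not need $\varepsilon\le1$ separately. Because $\varepsilon\le\operatorname{diam}(\Lambda)$, we have $\varepsilon^{3/2}\le C\varepsilon^{4/3}$, which yields the claim.

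The main obstacle is the geometric step. I must show that $\delta(z)<d(z)$ really places the segment inside a single side. I must also justify the Taylor estimate on a side, which needs the gradient of $\psi_P$ restricted to the closed side $\Lambda_\pm$ along a segment contained in it. For this I use that the segment lies in the relatively open component of $\Lambda\setminus\Sigma_P$ together with its closure. The one-sided gradient is continuous up to $\Sigma_P$, so the integral form of Taylor's formula applies. The measure estimate $\mu(N_t)\le Ct$ comes from a tubular-neighborhood bound for the compact $C^{1}$ surface $\Sigma_P$ together with the bounded densities.
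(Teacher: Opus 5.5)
Your proposal is correct and follows essentially the same argument as the paper. The ingredients match step by step: the $3/2$-order Taylor bound on segments that stay on one side, and the Lipschitz bound otherwise (your set $\{d(z)\le\delta(z)\}$ is exactly the superset the paper uses for its crossing set $\mathcal C_{R,P}$). The paper then also uses the tube estimate $P(U_t)\le Ct$ with Cauchy--Schwarz, the Chebyshev-type bound $\varepsilon^2/t$, and the choice $t=\varepsilon^{2/3}$. The only detail to add is the trivial case $\varepsilon=0$, where the integrand vanishes $\mu$-almost everywhere, before setting $t=\varepsilon^{2/3}$.
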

\begin{proof}[Proof of Claim \ref{clm:subgroup-score-remainder}]
\setlength{\parindent}{24pt}
Lemma \ref{lem:subgroupregularity} and composition with $F_P$ show that $\psi_P = g_P \circ F_P$ is globally Lipschitz and is $C^{1, 1 / 2}$ on each of $\overline{T_P (A)}$ and $\overline{\Lambda \setminus \overline{T_P (A)}}.$
The interface $\Sigma_P \coloneqq T_P (\partial A)$ is a compact $C^{2, \alpha}$ hypersurface contained in $\Lambda.$
Since $P (\Sigma_P) = 0$ and $(T_P)_\# \mu = P,$ the gradient $\nabla \psi_P (T_P (z))$ is defined for $\mu$-almost every $z.$

For any probability law $R$ on $\overline \Lambda,$ write the pointwise Taylor remainder as
$$
    E_{R, P} (z) \coloneqq \psi_P (T_R (z)) - \psi_P (T_P (z)) - \nabla \psi_P (T_P (z))^\top (T_R (z) - T_P (z)).
$$
Let $\mathcal C_{R, P}$ be the set of $z$ for which the segment joining $T_P (z)$ to $T_R (z)$ intersects $\Sigma_P.$
For $z \notin \mathcal C_{R, P},$ this segment stays on one side of the interface.
Taylor's formula and the $1 / 2$-H\"older gradient bound give $|E_{R, P} (z)| \leq C \norm{T_R (z) - T_P (z)}^{3 / 2}.$
For $z \in \mathcal C_{R, P},$ global Lipschitz continuity instead gives $|E_{R, P} (z)| \leq 2 \operatorname{Lip} (\psi_P) \norm{T_R (z) - T_P (z)}.$
Integration and H\"older's inequality yield
\begin{equation} \label{eq:subgrouptaylor}
    \int_\Omega |E_{R, P}|  d \mu \leq C \norm{T_R - T_P}_{L^2 (\mu)}^{3 / 2} + C \int_{\mathcal C_{R, P}} \norm{T_R - T_P}  d \mu.
\end{equation}
All constants are independent of $R.$

It remains to bound the contribution of segments crossing the interface.
For $\delta > 0,$ let $U_\delta \coloneqq \{y \in \Lambda \mid \operatorname{dist} (y, \Sigma_P) \leq \delta\}.$
The normal-coordinate formula for a compact $C^2$ hypersurface gives $|U_\delta| \leq C \delta$ for small $\delta$; see \cite[Lemma 14.16 and formula (14.98)]{gilbarg2001elliptic}.
The density of $P$ is bounded, and increasing $C$ extends the resulting bound to all $\delta > 0,$ since $P (U_\delta) \leq 1.$
Thus
\begin{equation} \label{eq:subgrouptube}
    P (U_\delta) \leq C \delta, \qquad \delta > 0.
\end{equation}
If a segment crosses $\Sigma_P,$ its initial point is within its length of $\Sigma_P.$
Consequently,
$$
    \mathcal C_{R, P} \subset \{z \mid T_P (z) \in U_\delta\} \cup \{z \mid \norm{T_R (z) - T_P (z)} > \delta\}.
$$
Cauchy--Schwarz on the first set and $a \indic_{\{a > \delta\}} \leq a^2 / \delta$ on the second give
\begin{align*}
    \int_{\mathcal C_{R, P}} \norm{T_R - T_P}  d \mu
    & \leq \norm{T_R - T_P}_{L^2 (\mu)} P (U_\delta)^{1 / 2} + \delta^{-1} \norm{T_R - T_P}_{L^2 (\mu)}^2 \\
    & \leq C \delta^{1 / 2} \norm{T_R - T_P}_{L^2 (\mu)} + \delta^{-1} \norm{T_R - T_P}_{L^2 (\mu)}^2.
\end{align*}
The bound is zero if $T_R = T_P$ in $L^2 (\mu).$
Otherwise, choose $\delta = \norm{T_R - T_P}_{L^2 (\mu)}^{2 / 3}$ to obtain
$$
    \int_{\mathcal C_{R, P}} \norm{T_R - T_P}  d \mu \leq C \norm{T_R - T_P}_{L^2 (\mu)}^{4 / 3}.
$$
Finally, $\norm{T_R - T_P}_{L^2 (\mu)} \leq \operatorname{diam} (\Lambda)$ implies
$$
    \norm{T_R - T_P}_{L^2 (\mu)}^{3 / 2} \leq \operatorname{diam} (\Lambda)^{1 / 6} \norm{T_R - T_P}_{L^2 (\mu)}^{4 / 3}.
$$
Substitution into (\ref{eq:subgrouptaylor}) proves the claim.
\end{proof}
\begin{proof}[Proof of Example \ref{ex:subgroup}]
\setlength{\parindent}{24pt}
We verify Assumption \ref{ass:functional} with $r = 4 / 3.$
Since $A$ is nonempty and open and $\rho$ is bounded below, $\mu (A) > 0.$
Set
$$
    c_P (z) \coloneqq \frac{\nabla_y \ell (z, T_P (z))}{\mu (A)}, \qquad D_P (v) \coloneqq \int_A c_P^\top \nabla v  d \mu.
$$
The chain rule and Lemma \ref{lem:mapregularity} give $c_P \in C^{1, \alpha} (\overline \Omega; \bR^d).$
Cauchy--Schwarz gives $|D_P (v)| \leq \norm{\nabla_y \ell}_\infty \mu (A)^{-1 / 2} \norm{v}_\cV,$ so $D_P \in \cV^\ast.$

For any law $R$ on $\overline \Lambda,$ convexity of $\Lambda$ and Taylor's formula in the second argument of $\ell$ give
\begin{align*}
    |\cJ_{A, \ell} (\varphi_R) - \cJ_{A, \ell} (\varphi_P) - D_P (\varphi_R - \varphi_P)|
    & \leq \frac{\norm{D_y^2 \ell}_\infty}{2 \mu (A)} \norm{T_R - T_P}_{L^2 (\mu)}^2 \\
    & \leq C \norm{T_R - T_P}_{L^2 (\mu)}^{4 / 3}.
\end{align*}
The second inequality uses $\norm{T_R - T_P}_{L^2 (\mu)} \leq \operatorname{diam} (\Lambda).$
This proves condition (2).

The adjoint of $D_P$ solves (\ref{eq:subgroupadjoint}) with the coefficient $c_P$ above.
Lemma \ref{lem:subgroupregularity} makes it continuous, proving condition (1), and Claim \ref{clm:subgroup-score-remainder} gives condition (3) with $r = 4 / 3.$
\end{proof}
\begin{proof}[Proof of Example \ref{ex:tail}]
\setlength{\parindent}{24pt}
The functional is linear, with first variation $D_\tau (v) = \tau^{-1} \int_{E_\tau} z^\top \nabla v  d \mu.$
Since $\norm{z} \leq 1$ and $\mu (E_\tau) = \tau,$ Cauchy--Schwarz gives $|D_\tau (v)| \leq \tau^{-1 / 2} \norm{v}_\cV.$
Thus condition (2) holds with zero remainder.

To verify conditions (1) and (3), write
$$
    \cJ_0 (\varphi) \coloneqq \int_\Omega z^\top \nabla \varphi  d \mu, \qquad
    \cJ_{A_\tau} (\varphi) \coloneqq \frac1{1 - \tau} \int_{A_\tau} z^\top \nabla \varphi  d \mu.
$$
Then $\cJ_\tau = (\cJ_0 - (1 - \tau) \cJ_{A_\tau}) / \tau.$
Examples \ref{ex:linear} and \ref{ex:subgroup} apply with $w (z) = z$ and $\ell (z, y) = z^\top y,$ respectively.
Their adjoints are continuous, and their influence functions, denoted by $\psi_{0, P}$ and $\psi_{A_\tau, P},$ satisfy (\ref{eq:psibound}) with exponents $2$ and $4 / 3.$
By linearity and uniqueness of the adjoint,
$$
    \psi_{\tau, P} = \frac1\tau \left(\psi_{0, P} - (1 - \tau) \psi_{A_\tau, P}\right).
$$
The same combination of the continuous adjoints proves condition (1).
The triangle inequality bounds the remainder in (\ref{eq:psibound}) by
$$
    \frac1\tau \left(C_{0, P} \norm{T_R - T_P}_{L^2 (\mu)}^2 + (1 - \tau) C_{A_\tau, P} \norm{T_R - T_P}_{L^2 (\mu)}^{4 / 3}\right) \leq C \norm{T_R - T_P}_{L^2 (\mu)}^{4 / 3},
$$
since $\norm{T_R - T_P}_{L^2 (\mu)} \leq \operatorname{diam} (\Lambda).$
This proves condition (3) and completes the verification.
\end{proof}

\subsection{Empirical transport bounds and inference} \label{app:empiricalrate}
\begin{proof}[Proof of Lemma \ref{lem:empiricalrate}]
\setlength{\parindent}{24pt}
Assumption \ref{ass:sampling} gives i.i.d. observations with density $p.$
Under Assumption \ref{ass:base}, $\Lambda$ is bounded and convex with $C^{2, 1}$ boundary, and $p$ is H\"older continuous and bounded above and away from zero.
Consequently, \cite[Theorem 1.1]{ambrosio2022quadratic} for $d = 2$ and \cite[Theorem 1.1(ii)]{huesmann2025asymptotics} with cost exponent $2$ for $d \geq 3$ give $\bE [W_2^2 (P_n, P)] \leq C a_{n, d}$ for all sufficiently large $n.$
Increasing $C$ covers the remaining $n \geq 2,$ since $W_2^2 (P_n, P) \leq \operatorname{diam} (\Lambda)^2.$
Lemma \ref{lem:stability} now yields
$$
    \bE \left[\norm{T_{P_n} - T_P}_{L^2 (\mu)}^2 \right] \leq C_P \bE [W_2^2 (P_n, P)] \leq C a_{n, d}
    ,
$$
which completes the proof.
\end{proof}
\begin{proof}[Proof of Proposition \ref{prop:empiricalbound}]
\setlength{\parindent}{24pt}
Taking $R = P_n$ in Theorem \ref{thm:expansion} gives (\ref{eq:empiricalexp}).
Since $r \leq 2,$ Jensen's inequality and Lemma \ref{lem:empiricalrate} give
$$
    \bE [| \cR (P_n, P) |]
    \leq
    C \bE \left[\norm{T_{P_n} - T_P}_{L^2 (\mu)}^r\right]
    \leq
    C \left( \bE \left[\norm{T_{P_n} - T_P}_{L^2 (\mu)}^2\right] \right)^{r / 2}
    \leq
    C a_{n, d}^{r / 2}
    ,
$$
which completes the proof.
\end{proof}
\begin{proof}[Proof of Theorem \ref{thm:empirical}]
\setlength{\parindent}{24pt}
Proposition \ref{prop:empiricalbound} and condition (\ref{eq:dimensionthreshold}) give, by Markov's inequality,
$$
    \sqrt n \cR (P_n, P) = o_P (1).
$$
Thus (\ref{eq:empiricalexp}) is an asymptotically linear representation.

The influence function is bounded and centered: $g_P$ is continuous on the compact set $\overline \Omega,$ $F_P$ is continuous, and $P \psi_P = 0.$
The central limit theorem and Slutsky's theorem therefore give the asserted Gaussian limit.
The identity $(F_P)_\# P = \mu$ gives $\sigma_P^2 = P \psi_P^2 = \mu g_P^2.$
\end{proof}

\begin{proof}[Proof of Corollary \ref{cor:joint}]
\setlength{\parindent}{24pt}
Write $\boldsymbol \theta \coloneqq (\theta_j (P))_{j = 1}^m,$ $\widehat{\boldsymbol \theta}_n \coloneqq (\widehat \theta_{n, j})_{j = 1}^m,$ and $\boldsymbol \psi_P \coloneqq (\psi_{P, j})_{j = 1}^m.$
Since $m$ is fixed, the componentwise remainder bounds in Proposition \ref{prop:empiricalbound} give
$$
    \sqrt n (\widehat{\boldsymbol \theta}_n - \boldsymbol \theta) = \frac{1}{\sqrt n} \sum_{i = 1}^n \boldsymbol \psi_P (Y_i) + o_P (1).
$$
The vectors $\boldsymbol \psi_P (Y_i)$ are i.i.d., centered, and bounded, with covariance $\Sigma,$ so the multivariate central limit theorem proves the corollary, including singular $\Sigma.$
\end{proof}

\begin{proof}[Proof of Theorem \ref{thm:bootstrap}]
\setlength{\parindent}{24pt}
The law $P$ has finite second moment because $\overline \Lambda$ is compact, the observations are i.i.d. with law $P,$ and $P_n^{\ast}$ is their empirical bootstrap measure.
Thus the Wasserstein inequality in \cite[Lemma 2.9]{gonzalez2026empirical} applies; together with Lemma \ref{lem:empiricalrate}, it gives
$$
    \bE [W_2^2 (P_n^{\ast}, P)]
    \leq 4 \bE [W_2^2 (P_n, P)]
    \leq C a_{n, d}.
$$
Set $L_n^{\ast} \coloneqq \sqrt n (P_n^{\ast} - P_n) \psi_P.$
We first prove $\bE [|S_n^{\ast} - L_n^{\ast}|] = o (1).$
Applying Theorem \ref{thm:expansion} at $P_n^{\ast}$ and $P_n$ yields
$$
    S_n^{\ast} = L_n^{\ast} + \sqrt n \left(\cR (P_n^{\ast}, P) - \cR (P_n, P) \right).
$$
The remainder bound in that theorem, Lemma \ref{lem:stability}, and Jensen's inequality with $r \leq 2$ now give
$$
    \bE {[}|S_n^{\ast} - L_n^{\ast}|{]}
    \leq \sqrt n \bE \left[| \cR (P_n^{\ast}, P)| + | \cR (P_n, P)| \right]
    \leq C \sqrt n a_{n, d}^{r / 2} = o (1),
$$
where the last equality follows from (\ref{eq:dimensionthreshold}).

The variables $\psi_P (Y_i)$ are i.i.d., centered, and bounded, with variance $\sigma_P^2,$ and $\psi_P (Y_i^{\ast})$ are an empirical bootstrap resample of these variables.
The bootstrap theorem for sample means, \cite[Theorem 23.4]{van1998asymptotic}, therefore gives conditional weak convergence of $L_n^{\ast}$ to $N (0, \sigma_P^2)$ almost surely, including when $\sigma_P^2 = 0.$

Moreover,
$$
    \sup_{f \in \mathrm{BL}_1 (\bR)}
    | \bE^{\ast} {[}f (S_n^{\ast}){]} - \bE^{\ast} {[}f (L_n^{\ast}){]}|
    \leq \bE^{\ast} {[}|S_n^{\ast} - L_n^{\ast}|{]} = o_P (1)
$$
by the preceding expectation bound and Markov's inequality.
Since the bounded Lipschitz metric metrizes weak convergence, this proves the first assertion.

For the coverage statement, suppose $\sigma_P^2 > 0.$
The limiting normal distribution function is then continuous.
Theorem \ref{thm:empirical} and the conditional limit just proved therefore verify the hypotheses of \cite[Lemma 23.3]{van1998asymptotic}, with scale $n^{- 1 / 2}$ and equal tail probabilities $\gamma / 2.$
It gives the asserted coverage of (\ref{eq:CI}).
\end{proof}

\subsection{Failure of process convergence}\label{app:processproofs}
\begin{proof}[Proof of Theorem \ref{thm:processobstruction}]
\setlength{\parindent}{24pt}
Set $U_n \coloneqq \sqrt n (\varphi_{P_n} - \varphi_P).$
We first prove that no subsequence of $U_n$ converges weakly in $H^1 (\Omega).$
Suppose, to the contrary, that $U_n \rightsquigarrow U$ in $H^1 (\Omega)$ along a subsequence.
Since $\cV$ is closed, $U \in \cV$ almost surely and $\norm{U_n}_\cV = O_P (1).$

We identify the law of the scalar projections $B_P (U, w),$ defined in equation (\ref{eq:bilinear}), first for smooth test functions $w.$
Let $\cD$ be the set of functions in $\cV$ that extend smoothly to a neighborhood of $\overline \Omega.$
Smooth approximation on the Lipschitz domain $\Omega,$ followed by subtraction of the $\mu$-mean, shows that $\cD$ is dense in $\cV.$

For $w \in \cD,$ set $f_w \coloneqq w \circ F_P,$ so that $f_w \circ T_P = w.$
Lemma \ref{lem:mapregularity} gives $f_w \in C^2 (\overline \Lambda)$ and $\nabla f_w (T_P) = H_P^{- 1} \nabla w.$
The push-forward identities and Taylor's formula on the convex domain $\Lambda$ give
\begin{equation} \label{eq:obstructiontaylor}
    (P_n - P) f_w
    =
    B_P (\varphi_{P_n} - \varphi_P, w)
    +
    R_{n, w}
    ,
    \qquad
    | R_{n, w} |
    \leq
    C_w \norm{\varphi_{P_n} - \varphi_P}_\cV^2
    ,
\end{equation}
with $C_w$ independent of $n.$

Multiplying (\ref{eq:obstructiontaylor}) by $\sqrt n,$ the bound $\norm{U_n}_\cV = O_P (1)$ gives $\sqrt n R_{n, w} = O_P (n^{- 1 / 2}) = o_P (1).$
Since $f_w (Y_i) = w (F_P (Y_i)),$ $F_P (Y_i)$ has law $\mu,$ and $\mu w = 0,$ we obtain
\begin{equation} \label{eq:obstructionprojection}
    B_P (U_n, w)
    =
    \frac{1}{\sqrt n}
    \sum_{i = 1}^n
        w (F_P (Y_i))
    +
    o_P (1)
    .
\end{equation}
For any $m \geq 1$ and $w_1, \ldots, w_m \in \cD,$ (\ref{eq:obstructionprojection}) and the multivariate central limit theorem give
$$
    \left(B_P (U_n, w_j)\right)_{j = 1}^m
    \rightsquigarrow N (0, \Sigma),
    \qquad
    \Sigma_{jk} \coloneqq \int_\Omega w_j w_k d \mu.
$$
The continuous mapping theorem, applied to $U_n \rightsquigarrow U,$ also gives convergence of this vector to $\left(B_P (U, w_j)\right)_{j = 1}^m.$
Uniqueness of the limit in distribution therefore shows that $\left(B_P (U, w_j)\right)_{j = 1}^m$ has law $N (0, \Sigma).$

We now prove the same assertion for arbitrary $w_1, \ldots, w_m \in \cV.$
Choose $w_{j, r} \in \cD$ such that $w_{j, r} \to w_j$ in $\cV$ for each $j.$
Boundedness of $B_P$ and $U \in \cV$ give, almost surely,
$$
    |B_P (U, w_{j, r}) - B_P (U, w_j)|
    \leq C \norm{U}_\cV \norm{w_{j, r} - w_j}_\cV
    \longrightarrow 0.
$$
Thus the vectors $\left(B_P (U, w_{j, r})\right)_{j = 1}^m$ converge almost surely to $\left(B_P (U, w_j)\right)_{j = 1}^m.$
By the preceding paragraph, their laws are $N (0, \Sigma^{(r)}),$ where
$$
    \Sigma^{(r)}_{jk}
    = \int_\Omega w_{j, r} w_{k, r} d \mu
    \longrightarrow \int_\Omega w_j w_k d \mu
    = \Sigma_{jk},
$$
since convergence in $\cV$ implies convergence in $L^2 (\mu).$
These Gaussian laws converge weakly to $N (0, \Sigma),$ so uniqueness of the limit in distribution gives the asserted law for $\left(B_P (U, w_j)\right)_{j = 1}^m.$

We now show that this Gaussian law is incompatible with $U \in \cV,$ following the spectral argument of \cite[Section 5.3]{manole2024central}.
Boundedness and coercivity of $B_P,$ together with Poincar\'e's inequality, make $u \mapsto B_P (u, u)^{1 / 2}$ a norm equivalent to the $H^1$ norm on $\cV.$
The positive upper and lower bounds on $\rho$ allow us to apply the Rellich compactness theorem and smooth approximation: $\cV$ embeds compactly and densely into $L_0^2 (\mu).$
Thus the spectral theorem for compactly embedded coercive forms (\cite[Theorem 4.1]{laugesen2012spectral}) gives an orthonormal basis $(e_j)$ of $L_0^2 (\mu),$ with $e_j \in \cV,$ and eigenvalues $0 < \lambda_1 \leq \lambda_2 \leq \cdots \to \infty$ satisfying
\begin{equation} \label{eq:energyspectrum}
    B_P (e_j, w)
    =
    \lambda_j \int_\Omega e_j w d \mu
    ,
    \qquad
    w \in \cV
    .
\end{equation}
We next prove $\lambda_j \leq C j^{2 / d},$ which will imply $\sum_j \lambda_j^{- 1} = \infty$ when $d \geq 2.$
The min--max formula (\cite[equation (9.2)]{laugesen2012spectral}), with the minimum taken over $j$-dimensional linear subspaces of $\cV,$ states that
$$
    \lambda_j = \min_{\substack{S \subset \cV \\ \dim S = j}}
    \max_{v \in S \setminus \{0\}}
    \frac{B_P (v, v)}{\int_\Omega v^2 d \mu}.
$$
To construct a suitable subspace $S,$ let $0 < \kappa_1 \leq \kappa_2 \leq \cdots$ be the eigenvalues of the Dirichlet Laplacian on $\Omega,$ with eigenfunctions $u_k$ orthonormal for Lebesgue measure.
Thus $- \Delta u_k = \kappa_k u_k$ in $\Omega$ and $u_k = 0$ on $\partial \Omega.$
Weyl's law on the bounded smooth domain $\Omega$ (\cite[Theorem 11.1]{laugesen2012spectral}) gives $\kappa_k \leq C k^{2 / d}.$

For $v = \sum_{k = 1}^{j + 1} a_k u_k \neq 0,$ the bounds on $\rho$ and $H_P^{- 1},$ followed by integration by parts, give
$$
    \frac{B_P (v, v)}{\int_\Omega v^2 d \mu}
    \leq C \frac{\int_\Omega \norm{\nabla v}^2 d z}{\int_\Omega v^2 d z}
    = C \frac{\sum_{k = 1}^{j + 1} \kappa_k a_k^2}{\sum_{k = 1}^{j + 1} a_k^2}
    \leq C \kappa_{j + 1},
$$
with $C$ independent of $j.$
The constraint $\mu v = 0$ removes at most one dimension from $\operatorname{span} (u_1, \ldots, u_{j + 1}).$
Its mean-zero subspace therefore contains a $j$-dimensional subspace $S \subset \cV.$
Applying the min--max formula to this $S$ yields $\lambda_j \leq C \kappa_{j + 1} \leq C^{\prime} j^{2 / d}.$

We now obtain the contradiction.
The joint Gaussian law proved above and orthonormality of $(e_j)$ in $L_0^2 (\mu)$ imply that $Z_j \coloneqq B_P (U, e_j)$ are independent standard normal variables.
Equation (\ref{eq:energyspectrum}) makes $\left(e_j / \sqrt{\lambda_j}\right)$ orthonormal for the inner product $B_P,$ so Bessel's inequality gives
$$
    \sum_{j = 1}^k \frac{Z_j^2}{\lambda_j} \leq B_P (U, U) \leq C \norm{U}_\cV^2 < \infty
    \qquad \text{almost surely, for every } k.
$$
For a standard normal variable, $\bE [\exp (- Z_j^2 / \lambda_j)] = (1 + 2 / \lambda_j)^{- 1 / 2}.$
Exponentiating the preceding inequality and using independence therefore gives
\par
$$
    0
    <
    \bE [\exp ( - B_P (U, U) )]
    \leq
    \prod_{j = 1}^k
        (1 + 2 / \lambda_j)^{- 1 / 2}
    \longrightarrow
    0
    .
$$
Indeed, $\sum_j \lambda_j^{- 1} \geq C^{- 1} \sum_j j^{- 2 / d} = \infty$ for $d \geq 2,$ and $\log (1 + 2 / \lambda_j) \sim 2 / \lambda_j,$ so the product tends to zero.
The expectation on the left is strictly positive because $B_P (U, U)$ is finite almost surely.
This contradiction excludes every weakly convergent subsequence of $U_n.$
Prokhorov's theorem on the separable Hilbert space $H^1 (\Omega)$ then excludes tightness.

Suppose $W_n \coloneqq a_n (\varphi_{P_n} - \varphi_P)$ is tight in $H^1 (\Omega),$ with $a_n \to \infty.$
We prove that $a_n = o \left(\sqrt n\right).$
Otherwise, for some $\varepsilon > 0,$ there is a subsequence with $a_n \geq \varepsilon \sqrt n.$
Passing to a further subsequence gives $\sqrt n / a_n \to b \in [0, \varepsilon^{- 1}].$
Tightness and Prokhorov's theorem give a further subsequence with $W_n {\rightsquigarrow} W.$
Then $U_n = \left(\sqrt n / a_n\right)W_n \rightsquigarrow bW,$ contradicting the first part.
Hence $a_n = o \left(\sqrt n\right).$

We next prove that every subsequential weak limit of $W_n$ is zero.
Take a subsequence such that $W_n {\rightsquigarrow} W.$
Its limit lies in the closed space $\cV,$ and tightness implies $\norm{W_n}_\cV = O_P (1).$
For each $w \in \cD,$ the scalar central limit theorem gives $(P_n - P) f_w = O_P (n^{- 1 / 2}).$
Multiplying (\ref{eq:obstructiontaylor}) by $a_n$ and using $a_n |R_{n, w}| \leq C_w a_n^{- 1} \norm{W_n}_\cV^2,$ we obtain
$$
    B_P (W_n, w)
    = a_n (P_n - P) f_w + O_P (a_n^{- 1})
    = O_P \left(a_n / \sqrt n\right) + O_P (a_n^{- 1})
    = o_P (1).
$$
The continuous mapping theorem yields $B_P (W, w) = 0$ almost surely.
Choose a countable subset of $\cD$ dense in $\cV$ and intersect these probability-one events.
Continuity of $B_P (W, \cdot)$ then gives $B_P (W, w) = 0$ for every $w \in \cV$ on that event; taking $w = W$ and using coercivity proves $W = 0.$

Every subsequence of the tight sequence therefore has a further subsequence converging in distribution to zero.
Thus $W_n {\rightsquigarrow} 0,$ which for a constant limit is equivalent to $\norm{W_n}_{H^1 (\Omega)} \to 0$ in probability.

For the assertion about transport maps, $u \mapsto \nabla u$ is an isometry from $\cV$ onto a closed subspace of $L^2 (\mu; \bR^d).$
Poincar\'e's inequality makes its inverse continuous into $H^1 (\Omega).$
Tightness of the normalized map errors therefore gives tightness of the corresponding normalized potentials; applying the result above and the continuous gradient map proves the assertion for transport maps.
\end{proof}

\subsection{Efficiency proofs} \label{app:efficiency}
\begin{proof}[Proof of Theorem \ref{thm:efficiency}]
\setlength{\parindent}{24pt}
(i)
Fix $\eta \in \cH_P.$
Since $p$ is bounded away from zero, \cite[Lemma 2.2]{ambrosio2022quadratic} gives, for all sufficiently small $t,$
$$
\begin{aligned}
    W_2^2 (P_{t, \eta}, P)
    \leq 
    C t^2
    \sup_{\substack{v \in H^1 (\Lambda) \\ \norm{\nabla v}_{L^2 (\Lambda)} \leq 1}}
    \left|\int_\Lambda v \eta p  d y\right|^2
    \leq 
    C t^2 \norm{\eta p}_{L^2 (\Lambda)}^2
    \leq 
    C t^2 P \eta^2.
\end{aligned}
$$
For the second inequality, subtract the Lebesgue mean of $v$ using $\int_\Lambda \eta p  d y = 0,$ and apply Poincar\'e's inequality; the last inequality uses boundedness of $p.$
Lemma \ref{lem:stability} then gives $\norm{T_{P_{t, \eta}} - T_P}_{L^2 (\mu)} = O (t).$
Theorem \ref{thm:expansion} yields
$$
    \theta (P_{t, \eta}) - \theta (P)
    = t P (\eta \psi_P) + O (t^r).
$$
Since $r > 1,$ the derivative is $\dot\theta_P (\eta) = P (\eta \psi_P),$ a continuous linear form on $\cH_P$ by Cauchy--Schwarz.
Moreover, $\psi_P \in \cH_P,$ so
$$
    \sup_{\substack{\eta \in \cH_P \\ P \eta^2 > 0}}
    \frac{|\dot\theta_P (\eta)|^2}{P \eta^2}
    = P \psi_P^2 = \sigma_P^2.
$$
This proves (i).
In particular, setting $t = n^{- 1 / 2}$ gives
\begin{equation} \label{eq:parameterderivative}
    \sqrt n \left(\theta (P_{n, \eta}) - \theta (P)\right)
    = P (\eta \psi_P) + o (1).
\end{equation}

(ii)
For each $\eta \in \cH_P,$ boundedness of $\eta$ makes the path (\ref{eq:scorepath}) differentiable in quadratic mean at zero with score $\eta.$
Hence \cite[Lemma 3.10.11]{van1996weak} gives local asymptotic normality: under $P^{\otimes n},$
$$
    \log \frac{d P_{n, \eta}^{\otimes n}}{d P^{\otimes n}}
    = \frac1{\sqrt n} \sum_{i = 1}^n \eta (Y_i) - \frac12 P \eta^2 + o_P (1).
$$
The multivariate central limit theorem gives joint convergence of the centered sums for every finite collection of scores; the limiting covariance for $\eta, \zeta \in \cH_P$ is $P (\eta \zeta).$

Together with (\ref{eq:parameterderivative}) and the continuous linear derivative identified in (i), this verifies the hypotheses of \cite[Theorem 3.11.2]{van1996weak} and gives the convolution bound in (ii).

(iii)
The same local asymptotic normality and parameter regularity allow us to apply \cite[Theorem 3.11.5]{van1996weak}.
Its lower semicontinuous subconvex losses are the bowl-shaped losses defined in Section \ref{sec:efficiency}, and its Gaussian bound has variance $\sigma_P^2$ by the calculation above.
This proves (iii).

(iv)
Under the additional assumptions in (iv), Theorem \ref{thm:empirical} gives
$$
    \sqrt n (\widehat \theta_n - \theta (P))
    = \frac{1}{\sqrt n} \sum_{i = 1}^n \psi_P (Y_i) + o_P (1).
$$
For each $\eta \in \cH_P,$ the bivariate central limit theorem and the likelihood expansion give, under $P^{\otimes n},$
$$
    \begin{pmatrix}
        \sqrt n (\widehat \theta_n - \theta (P)) \\
        \displaystyle \log \frac{d P_{n, \eta}^{\otimes n}}{d P^{\otimes n}}
    \end{pmatrix}
    \rightsquigarrow
    N \left(
        \begin{pmatrix}
            0 \\
            - \frac12 P \eta^2
        \end{pmatrix},
        \begin{pmatrix}
            \sigma_P^2 & P (\eta \psi_P) \\
            P (\eta \psi_P) & P \eta^2
        \end{pmatrix}
    \right).
$$
The exponential of the second Gaussian coordinate has expectation one, since its mean is minus half its variance.
Le Cam's third lemma (\cite[Theorem 3.10.7]{van1996weak}) therefore identifies the limit under $P_{n, \eta}^{\otimes n}$ as the law of the first coordinate after exponential tilting by the second.
This tilting shifts the mean of the first coordinate by their covariance $P (\eta \psi_P)$ and leaves its variance $\sigma_P^2$ unchanged.
Thus
$$
    \sqrt n (\widehat \theta_n - \theta (P))
    \rightsquigarrow N \left(P (\eta \psi_P), \sigma_P^2\right)
    \quad \text{under } P_{n, \eta}^{\otimes n}.
$$
Subtracting (\ref{eq:parameterderivative}) yields
$$
    \sqrt n \left(\widehat \theta_n - \theta (P_{n, \eta})\right)
    \rightsquigarrow N (0, \sigma_P^2)
    \quad \text{under } P_{n, \eta}^{\otimes n}.
$$
The limit is independent of $\eta,$ proving regularity.

Under the assumptions in (iv), estimator regularity implies convergence of expectations to $\bE [\ell (Z)]$ for every bounded continuous loss $\ell.$
Since $I$ is finite, this convergence is uniform over $\eta \in I,$ proving the remaining assertion in (iv).
\end{proof}
\setlength{\emergencystretch}{2em}
\printbibliography
\end{document}